\numberwithin{equation}{section}
\theoremstyle{plain} 
\newtheorem{theorem}{Theorem}[section]
\newtheorem{lemma}[theorem]{Lemma}
\newtheorem{corollary}[theorem]{Corollary}
\newtheorem{proposition}[theorem]{Proposition}
\newtheorem{assumption}[theorem]{Assumption}
\newtheorem{remark}[theorem]{Remark}
\renewcommand{\Re}{\mathrm{Re}\,}
\renewcommand{\Im}{\mathrm{Im}\,}
\newcommand{\E}{{\mathbf E }}
\newcommand{\R}{{\mathbb R }}
\newcommand{\N}{{\mathbb N}}
\renewcommand{\P}{{\mathbf P}}
\newcommand{\C}{{\mathbb C}}
\newcommand{\B}{{\mathcal B}}
\newcommand{\ov}{\overline}
\DeclareMathOperator{\sgn}{sgn}
\newcommand{\dif}{\operatorname{d}\!{}}
\newcommand{\xx}{{\bm x}}
\newcommand{\yy}{{\bm y}}
\newcommand{\ea}{{\bm e}_a}
\newcommand{\eB}{{\bm e}_B}
\newcommand{\ii}{\mathrm{i}}
\newcommand{\dd}{\mathrm{d}}
\newcommand{\ie}{\emph{i.e., }}
\newcommand{\eg}{\emph{e.g., }}
\newcommand{\wt}{\widetilde}
\newcommand{\wh}{\widehat}
\newcommand{\uu}{\mathbf{u}}
\newcommand{\vv}{\mathbf{v}}
\newcommand{\nc}{\normalcolor}
\newcommand{\bs}{\boldsymbol}
\def\ga{G^{z_1}}
\def\gb{G^{z_2}}
\def\one{\mathds{1}}
\def\<{\langle}
\def\>{\rangle}
\renewcommand{\mathbf}[1]{\bs{#1}}
\begin{document}

	\begin{minipage}{0.85\textwidth}
		\vspace{2.5cm}
	\end{minipage}
	\begin{center}
		\large\bf 	Optimal decay of eigenvector overlap for non-Hermitian random matrices
		
	\end{center}

	\renewcommand*{\thefootnote}{\fnsymbol{footnote}}	
	\vspace{0.5cm}
	
	\begin{center}
		\begin{minipage}{1.0\textwidth}
			\begin{minipage}{0.33\textwidth}
				\begin{center}
					Giorgio Cipolloni\\
					\footnotesize 
					{University of Arizona}\\
					{\it gcipolloni@arizona.edu}
				\end{center}
			\end{minipage}
			\begin{minipage}{0.33\textwidth}
				\begin{center}
					L\'aszl\'o Erd\H{o}s\footnotemark[2]\\
					\footnotesize 
					{IST Austria}\\
					{\it lerdos@ist.ac.at}
				\end{center}
			\end{minipage}
			\begin{minipage}{0.33\textwidth}
				\begin{center}
					Yuanyuan Xu\footnotemark[2]\\
					\footnotesize 
					{AMSS,~CAS}\\
					{\it yyxu2023@amss.ac.cn}
				\end{center}
			\end{minipage}
		\end{minipage}
	\end{center}
	
	\bigskip

	\footnotetext[2]{\footnotesize{Partially supported by ERC Advanced Grant "RMTBeyond" No.~101020331.}}

	\renewcommand*{\thefootnote}{\arabic{footnote}}
	
	\vspace{5mm}

	\begin{center}
		\begin{minipage}{0.91\textwidth}\small{
				{\bf Abstract.}}
			We consider the standard overlap 
			 $\mathcal{O}_{ij}: =\langle \mathbf{r}_j, \mathbf{r}_i\rangle\langle \mathbf{l}_j, \mathbf{l}_i\rangle$ of 
			 any bi-orthogonal family of left and right eigenvectors of a large random matrix $X$ 
			 with centred i.i.d. entries and
			 we  prove that it decays as an inverse second power of the distance
			 between the corresponding eigenvalues. This extends similar results  for 
			 the complex Gaussian ensemble  from Bourgade and Dubach \cite{Dubach}, as well as
			 Benaych-Georges and Zeitouni \cite{Zeitouni},
			 to any i.i.d. matrix ensemble in both symmetry classes. As a main
			 tool, we prove a two-resolvent local law for the Hermitisation of $X$ uniformly in the spectrum
			 with  optimal decay rate and optimal dependence on the density near the spectral edge.
		\end{minipage}
	\end{center}

	\vspace{5mm}
	
		{\footnotesize
		{\noindent\textit{Keywords}: Zigzag strategy, Multi-resolvent local law,  Characteristic flow, Dyson Brownian Motion}\\
		{\noindent\textit{MSC number}:  60B20, 15B52}\\
		{\noindent\textit{Date}:  \today \\
		}
		
		\vspace{2mm}

		\thispagestyle{headings}

		\bigskip

		\normalsize

\section{Introduction}

We consider large $n\times n$ non--Hermitian matrices $X$ with independent, identically distributed (i.i.d.) centered entries, with the standard normalization $\E |X_{ij}|^2=\frac{1}{n}$ (\emph{i.i.d. matrices}).
 While the eigenvalue statistics of fairly general large Hermitian random matrices have been very well understood in the last fifteen years, 
  those of i.i.d. matrices have only been understood very recently both in the bulk \cite{DY24, MO24, Osman24} and at the edge \cite{edge} of the spectrum; see also \cite{Afa24, CCEJ24, CEJ24, Liu23, Liu24, Zhang24} for recent results for 
 \emph{deformed} models of the form $A+X$, where $A$ is a deterministic matrix.  
 One of the main difficulties in  understanding the eigenvalue statistics in the non--Hermitian setting is  the high instability of the spectrum, which is partly manifested by the behavior of the their eigenvectors. While in the Hermitian case, e.g. for GOE/GUE\footnote{We say that an $n\times n$ random matrix $W$ is a GUE (resp. GOE) matrix if it is Hermitian $W=W^*$, the entries above the diagonal $W_{ij}$ are such that $\sqrt{n}W_{ij}$ are 
 independent standard complex (real) Gaussian random variables, and the diagonal entries are real Gaussian random variables of variance $1/n$ (resp. $2/n$).} matrices for simplicity, the eigenvectors are distributed according to the Haar measure, for Ginibre\footnote{We say that an $n\times n$ random matrix is a complex (resp. real) Ginibre matrix if its entries are centered complex (resp. real) Gaussian random variables with variance $1/n$.} matrices the left/right eigenvectors are strongly correlated and difficult to grasp even for the Gaussian case (the general i.i.d. case is expected to be even much harder). Recall that in general, a non--Hermitian matrix $X$ has a two sets of eigenvectors $\mathbf{l}_i, \mathbf{r}_i$, which, assuming that each eigenvalue $\sigma_i$ of $X$ is simple, are defined by
\begin{equation}
X\mathbf{r}_i=\sigma_i \mathbf{r}_i, \qquad\quad \mathbf{l}_i^tX=\sigma_i\mathbf{l}_i^t, \qquad i=1,2,\ldots, n.
\end{equation}
We normalize the left/right eigenvectors to form a bi-orthogonal basis, i.e. 
$\langle \overline{\mathbf{l}}_j, \mathbf{r}_i\rangle=\mathbf{l}_j^t\mathbf{r}_i=\delta_{ij}$. Instead of studying
the eigenvectors directly, one usually considers the
scale-invariant quantity
\begin{equation}
\label{eq:odo}
\mathcal{O}_{ij}:= \langle \mathbf{r}_j, \mathbf{r}_i\rangle\langle \mathbf{l}_j, \mathbf{l}_i\rangle,
\end{equation}
called \emph{eigenvector overlaps}.
For random matrices, overlaps
emerge in many problems both in mathematics \cite{Zeitouni, Dubach, Fyo18, Cra18, MC00, Wal15} 
and  physics \cite{Ake20, Bel17, CM98, Cipent23, Cipneth24, Ghosh23, Fyo22, Fyo23, Fyo02, Fyo12, JNNPZ99, Roy23}. Two of their most prominent motivations are:
\begin{enumerate}

\item  $\mathcal{O}_{ij}$ determine the correlation of the evolution of the eigenvalues $\sigma_i$ under a \emph{non--Hermitian} analog of the \emph{Dyson Brownian motion}, which was 
 rigorously \nc derived in \cite[Appendix A]{Dubach} (see also \cite{Burda14, Grela18}).

\item For $i=j$, the quantity $\sqrt{\mathcal{O}_{ii}}$ is known as the \emph{eigenvalue condition number}
in numerical analysis, as it determines how sensitive $\sigma_i$ is to small perturbation
 (especially relevant  in the context of \emph{smoothed analysis} \cite{San06} via random matrices,  see e.g. \cite{Banks20, CES22con}).

\end{enumerate}

Despite the central role of the overlaps,
 rigorous results about their size and distribution have appeared only recently. Inspired by the seminal work of Chalker and Mehlig \cite{CM98} computing the expectations of the $\mathcal{O}_{ij}$ for the  complex Ginibre ensemble,
  Bourgade and Dubach \cite{Dubach}
 also computed the second moment of the off-diagonal overlap and diagonal overlap,
\begin{equation}
\label{eq:varcomp}
\E\big[|\mathcal{O}_{ij}|^2\big| \sigma_i=z_1, \,\, \sigma_j=z_2\big]\approx\frac{(1-|z_1|^2)(1-|z_2|^2)}{|z_1-z_2|^4},
\end{equation}
 \begin{equation}
	\label{eq:varcomp_diag}
	\E\big[ \mathcal{O}_{ii} \mathcal{O}_{jj} \big| \sigma_i=z_1, \,\, \sigma_j=z_2\big]\approx n^2(1-|z_1|^2)(1-|z_2|^2),
\end{equation}
identifying a power law (quadratic) decay of $\mathcal{O}_{ij}$ for far away eigenvalues.
Beyond Ginibre ensembles,  Benaych-Georges and Zeitouni \cite{Zeitouni} showed that the closely related quantity 
 $\sqrt{n}(\sigma_i-\sigma_j)\langle \mathbf{r}_i,\mathbf{r}_j\rangle/\|\mathbf{r}_i\|\|\mathbf{r}_j\|$
    is sub-Gaussian (uniformly in $n$) for more general non-normal matrices defined by an invariant distribution,
   and explicitly compute its distribution in the complex Ginibre case. Both
   results identify a quadratic decay, however their proofs
    rely on the invariance (or even Gaussianity) of the underlying matrix ensemble.
     
   Is this quadratic decay ubiquitous, in particular does it extend to 
    general real and complex i.i.d. matrix ensembles without invariance properties? In this paper we answer affirmatively
    by proving
    that the optimal bound (modulo the $n^\xi$ factor) as indicated by (\ref{eq:varcomp})-(\ref{eq:varcomp_diag}) 
\begin{equation}
\label{eq:off-diagover}
\frac{|\mathcal{O}_{ij}|}{\sqrt{\mathcal{O}_{ii}\mathcal{O}_{jj}}}\le \frac{n^\xi}{n|\sigma_i-\sigma_j|^2+1}
\end{equation}
holds uniformly in the spectrum
with very high probability for any small $\xi>0$. 
 Previously, only two non-optimal bounds were available. In \cite[Theorem 3.1]{mesoCLT} it was
 shown that $|\mathcal{O}_{ij}|/\sqrt{\mathcal{O}_{ii}\mathcal{O}_{jj}}\le n^{-\epsilon}$ for bulk eigenvectors,  if 
 $|\sigma_i-\sigma_j|\ge n^{-1/2+\epsilon’}$
 which is off by almost an entire factor $n$ for far away eigenvalues.
  Close to the edge of the spectrum, $|\sigma_i|, 
 |\sigma_j|\approx 1$, half of the optimal decay, $1/(\sqrt{n}|\sigma_i-\sigma_j|)$, was obtained in \cite[Corollary 3.4]{gumbel}.  We remark that the quadratic decay bound obtained in (\ref{eq:off-diagover}) is better than $1/(\sqrt{n}|\sigma_i-\sigma_j|)$ if $|\sigma_i-\sigma_j|\geq n^{-1/2+\xi}$. 
 Neither proof can be directly improved and  the optimal bound \eqref{eq:off-diagover} requires to resolve two new 
 challenges.

We now explain  the novelties in our proof of  \eqref{eq:off-diagover}.
The main inputs are the so-called \emph{multi--resolvent local laws} for products of resolvents of the Hermitisation of $X-z$.
Since the non-Hermitian eigenvectors $\mathbf{l}_i, \mathbf{r}_i$ are not accessible directly, we relate them to the eigenvectors $\mathbf{w}_i^z=((\mathbf{u}_i^z)^*,(\mathbf{v}_i^z)^*)^*\in \C^{2n}$ of the $(2n)\times (2n)$ Hermitized matrix
\begin{equation}
H^z:=\left(\begin{matrix}
0 & X-z \\
(X-z)^*& 0
\end{matrix}\right).
\end{equation}
Note that
 $\mathbf{u}_i^z,\mathbf{v}_i^z\in \C^n$ are also the left/right singular vectors corresponding to the singular value $\lambda_i^z$ of a family of matrices $X-z$ parametrized by $z\in \C$. 
We assume that the singular values $\lambda_i^z$ are labelled in an increasing order. 

Eigenvectors of $X$ and singular vectors of $X-z$ are not  related in general, except in the
special case when $z$ is an eigenvalue. Simple algebra shows that
\begin{equation}\label{eq:evevrel}
\lambda^z_1=0 \Longleftrightarrow
0\in\mathrm{Spec}(H^z) \Longleftrightarrow 0\in \mathrm{Spec}(X-z)\Longleftrightarrow z\in\mathrm{Spec}(X),
\end{equation}
and in this case the eigenvectors of $X$ with eigenvalue $z$ and the singular vectors of $X-z$ with singular value zero coincide. This fundamental relation will be our starting point
 to study the eigenvalues and the eigenvectors of the non--Hermitian $X$ 
 via the study of the spectrum of the Hermitized matrix $H^z$. 
 Our goal is thus to estimate quantities of the form $|\langle \mathbf{u}_1^{z_1}, \mathbf{u}_1^{z_2}\rangle|, |\langle \mathbf{v}_1^{z_1}, \mathbf{v}_1^{z_2}\rangle|$ with high probability simultaneously for every $|z_1|,|z_2|\le 1$. In fact, if we have the desired bound on these quantities, by choosing $z_1=\sigma_i$, $z_2=\sigma_j$, we can readily conclude \eqref{eq:off-diagover}. By spectral decomposition, one can easily see that for the desired  bound on
$|\langle \mathbf{u}_1^{z_1}, \mathbf{u}_1^{z_2}\rangle|, |\langle \mathbf{v}_1^{z_1}, \mathbf{v}_1^{z_2}\rangle|$, it is actually enough to estimate
\begin{equation}
\label{eq:2gllaw}
\langle \Im G^{z_1}(\ii\eta_1) \Im G^{z_2}(\ii\eta_2)\rangle\lesssim \frac{\rho_1\rho_2}{|z_1-z_2|^2}, \qquad\qquad\quad G^z(\ii\eta):= (H^z-\ii\eta)^{-1},
\end{equation}
for appropriate choices of $\eta_1,\eta_2$
(eventually $\eta_i$ is  chosen a bit above  the local fluctuation scale of the eigenvalues of $H^{z_i}$ close to zero).
Here $\rho_i$ denotes the local density of the eigenvalues of $H^{z_i}$. 

Although we only need the upper bound  \eqref{eq:2gllaw} for  \eqref{eq:off-diagover}, along the way
in Theorem~\ref{thm:2G_edge}
we will  prove a stronger result, called \emph{two-resolvent local law},
  that even  identifies the leading term in the relevant regime of $\eta_i$’s:
\begin{equation}
\label{eq:2gllaw1}
\Big| \langle \Im G^{z_1}(\ii\eta_1) \Im G^{z_2}(\ii\eta_2) - M_{12}\rangle\Big| = o\Big( \frac{\rho_1\rho_2}{|z_1-z_2|^2}\Big),
\end{equation}
where $M_{12}$ is a deterministic matrix computed from the underlying \emph{Dyson equation}.

To prove \eqref{eq:2gllaw1}, we rely on the \emph{zigzag strategy} \cite{mesoCLT, CEH23}, which involves three main steps: i) controlling $\langle \Im G^{z_1}(\ii\eta_1) \Im G^{z_2}(\ii\eta_2) - M_{12}\rangle$ for large values
of $\eta_1,\eta_2\sim 1$, ii) [\emph{zig-step}] using the \emph{method of characteristics} (introduced in this form in \cite{AH20, B21, HL19} for single resolvents, see \cite[Section 1.4]{CEHK24} for more history)
 to show that this bound can be propagated down to optimally small spectral parameters $\eta_i$
 at the price of adding a Gaussian component to the matrix $X$, iii) [\emph{zag-step}] 
 using a version of the Green's function comparison theorem (GFT)
  to remove the Gaussian component. While on a high level this approach has  been used quite extensively, in the current paper we face a novel challenge: we need to extract an additional small factor of order $\rho$
   coming from $\Im G^z$ (compared to $G^z$) close to the edge of the spectrum in the context of two different spectral parameters $z_1,z_2$. This requires to study the evolution of $\langle \Im G_1 \Im G_2\rangle$, with $G_i:=G^{z_i}(\ii\eta_i)$, along the characteristics flow, which is more delicate than the one for $\langle G_1 G_2\rangle$ investigated in \cite{mesoCLT, CEH23} and also requires a more delicate analysis at the GFT level.
   A bound on this quantity was estimated in \cite[Part B of Theorem 3.3]{gumbel} close to the edge of the spectrum with a
   non-optimal decay $1/|z_1-z_2|$.
   This non-optimality is a consequence of bounding $\langle G_1 G_2\rangle$ and then writing $2\ii\Im G=G-G^*$, rather than analyzing $\langle \Im G_1 \Im G_2\rangle$ directly and extract a cancellation between $G$ and $G^*$
   near the edge, as we do instead in the current paper. \nc 
   
   The second challenge we resolve in this paper is to estimate optimally the deterministic $M$-terms, such as $M_{12}$
   in ~\eqref{eq:2gllaw1}, even for longer resolvent chains that naturally arise along the zig-flow
   (we need three-resolvent chains).
   In fact,  the precision of the zigzag method ultimately depends only on such estimates on $M$-terms.
   While an explicit recursion is available for expressing the $M$-terms, the formula is inherently unstable 
   in the small $\eta$ regime as it contains delicate cancellations. In \cite{gumbel} these cancellations were 
   unravelled but only outside of the spectrum and without the extra $\rho$-factors.
   Now we find the optimal estimate uniformly in the spectrum
   and with the correct $\rho$ factors.

 We remark that the analogue of 
 the quadratic decay in  $z_1-z_2$ in \eqref{eq:2gllaw}, and hence in \eqref{eq:off-diagover}, appears
 in a related Hermitian model as well. In \cite{CEHK24} the authors study a Wigner matrix $W$ with two different (deterministic) deformations,  $W+D_1$ and $W+D_2$. They show that the product of their resolvents $\langle G_1 G_2\rangle$, 
 hence their eigenvector overlap, 
 exhibits essentially a quadratic decay $1/\langle (D_1-D_2)^2\rangle$, although the precise statement is
 modulated by the difference in the spectral parameters as well.  However, 
  the results therein are optimal only within the bulk of the spectrum, 
  again because the smallness coming from $\Im G^z$ was not exploited.

     \bigskip

 The central role that  \eqref{eq:2gllaw} plays in studying overlaps  underlines the  importance of multi-resolvent local 
 laws in the theory of random matrices. For many years, single resolvent local laws have been the technical workhorses 
 behind the proofs of Wigner-Dyson-Mehta and related universality results on local eigenvalue statistics
 since they provided almost optimal a priori bounds on individual  eigenvalues  and eigenvectors.
 These are the standard \emph{rigidity} and \emph{delocalisation} results used regularly in arguments based 
 on the Dyson Brownian motion (DBM) and the Green function comparison theorems (GFT).
 A lot of relevant information on random matrices, however,  are not accessible via 
 a single resolvent. Key physical concepts such as
 eigenvector overlaps,  thermalisation phenomena, out-of-time-order correlations etc. 
 are inherently connected with multi-resolvent chains of the form $G_1A_1G_2A_2\ldots,$ where
 $G_i$ are random resolvents and $A_i$’s are deterministic matrices.  Besides these obvious motivations, 
 the recent  results on bulk universality of eigenvalues and eigenvectors for i.i.d. matrices,
 initiated by the breakthrough papers \cite{MO24, Osman24} using the partial Schur decomposition,
 also heavily rely on multi-resolvent local laws as a priori technical inputs.  
 The Schur decomposition method is powerful to prove the universality of the distribution of
 the diagonal overlaps $\mathcal{O}_{ii}$ \cite{Osmevectors}, but it cannot yet access to 
 off-diagonal overlaps. Our result gives \eqref{eq:off-diagover} an optimal a priori bound on $\mathcal{O}_{ij}$
 and we also expect that  multi-resolvent local laws will play an important role
 in resolving the open conjecture on the universality of $\mathcal{O}_{ij}$ for i.i.d. matrices.

We close this introduction with a quick overview of
 the recent vast literature about overlaps \eqref{eq:odo} for various random matrix ensembles. 
 With the exception of \cite{Dubach}, 
 all these works concern only diagonal overlaps $\mathcal{O}_{ii}$. 
  Long after   the classical paper \cite{CM98}, the two main works that revived the interest in overlaps of non--Hermitian eigenvectors in the context of random matrix theory are \cite{Dubach, Fyo18} on Ginibre ensemble.
 Besides the results on the off-diagonal overlaps in \cite{Dubach} that we already described, 
 the distribution of $\mathcal{O}_{ii}$
 and  the correlation of $\mathcal{O}_{ii}\mathcal{O}_{jj}$ for bulk eigenvalues in the complex case have also been computed
 in that paper.
 In \cite{Fyo18} the author computes the distribution of $\mathcal{O}_{ii}$ both in the bulk and at the edge of the spectrum for complex matrices, as well as for overlaps corresponding to real eigenvalues of real Ginibre matrices. 
 Expectation of overlaps for complex eigenvalues was covered in \cite{Wur23}. \nc
Beyond Gaussian ensembles,   in the context of general i.i.d. matrices, the precise (in terms of $n$--dependence) size of $\mathcal{O}_{ii}$ was identified in \cite{CEHS23, HC23} (see also for previous non-optimal results in terms of the $n$--dependence \cite{Banks20, Jain21, Tik20}), and more recently even the universality of its distribution 
has been established \cite{Osmevectors}. Two very recent results \cite{DYYY24, Osmsingvec} also prove 
 the universality of the Gaussian fluctuations directly for  the eigenvectors' entries of i.i.d. matrices. 
 Eigenvector overlaps have also been studied for several other models even beyond i.i.d. matrices,
 including deformed Ginibre matrices \cite{Banks21, San06, Zhang24e}, symplectic Ginibre matrices \cite{Ake20, Ake24}, non--Hermitian Brownian motion \cite{BCH24, Esaki23}, the elliptic ensemble \cite{Crump24, Crump24bis, Fyo21, Tar24}, spherical and truncated unitary ensembles \cite{Dubach21, Dubach23, Noda23}, and non-normal dynamical systems \cite{Morim24}.
 
\subsection*{Acknowledgment} We thank Yan Fyodorov for pointing out the reference \cite{JNNPZ99} and for useful comments. We also thank the anonymous referee for their several comments that substantially improved the presentation of this manuscript.

\subsection*{Conventions and notations}

For integers $k,l\in\N$, with $k<l$, we use the notations $[k,l]:= \{k,\dots, l\}$ and $[k]:=[1,k]$. For positive quantities $f,g$ we write $f\lesssim g$ and $f\sim g$ if $f \le C g$ or $c g\le f\le Cg$, 
respectively, for some constants $c,C>0$ which depend only on the constants appearing 
in~\eqref{eq:hmb}. We set $f\wedge g= \min\{ f, g\}$ and $f \vee  g= \max\{ f, g\}$. 
Furthermore, for $n$--dependent positive
quantities $f_n,g_n$ we use the notation $f_n\ll g_n$ to denote that $\lim_{n\to\infty} (f_n/g_n)=0$. 
Throughout the paper $c,C>0$ denote small and large constants, respectively, which may change from line to line. We denote vectors by bold-faced lower case Roman letters $\bm{x},\bm{y}\in\C^d$, for some $d\in\N$. Vector and matrix norms, $\lVert\bm{x}\rVert$ and $\lVert A\rVert$, indicate the usual Euclidean norm and the corresponding induced matrix norm. For any $d \times d$ matrix $A$ we use $\langle A\rangle:= d^{-1}\mathrm{Tr} A$ to denote the normalized trace of $A$. Moreover, for vectors ${\bm x}, {\bm y}\in\C^d$ we define the usual scalar product 
$\langle \bm{x},\bm{y}\rangle:= \sum_{i=1}^d \overline{x_i} y_i$, and use $(A)_{\xx \yy}$ to denote the inner product $\<\xx, A \yy\>$.

\noindent

We define the following special $2n\times 2n$ matrices 
\begin{equation}\label{eq:defF}
	E_1:=
	\begin{pmatrix}
		1  &  0 \\
		0   & 0
	\end{pmatrix}, 
\qquad E_2=
	\begin{pmatrix}
		0  &  0 \\
		0   & 1
	\end{pmatrix}, 
\qquad
	E_\pm:=E_1\pm E_2=\begin{pmatrix}
		1  &  0 \\
		0   & \pm 1
	\end{pmatrix}, 
\qquad
F:=
\begin{pmatrix}
0  &  1 \\
0   & 0
\end{pmatrix},
\end{equation}
in particular, $E_+$ is  the $2n$--dimensional identity matrix.

\noindent

We will often use the concept of ``with very high probability'' for an $n$-dependent event,
meaning that for any fixed $D>0$ the probability of the event is bigger than $1-n^{-D}$ if $n\ge n_0(D)$. Moreover, we use the convention that $\xi>0$ denotes an arbitrary small positive exponent 
which is independent of $n$. We recall the standard notion of \emph{stochastic domination}:  
given two families of non-negative random variables
\[
X=\left(X^{(n)}(u) \,:\, n\in\N, u\in U^{(n)}\right)\quad\text{and}\quad Y=\left(Y^{(n)}(u) \,:\, n\in\N, u\in U^{(n)}\right)
\] 
indexed by $n$ (and possibly some parameter $u$  in some parameter space $U^{(n)}$), 
we say that $X$ is {\it stochastically dominated} by $Y$, if for any $\xi, D>0$ we have \begin{equation}
	\label{stochdom}
	\sup_{u\in U^{(n)}} \P\left[X^{(n)}(u)>n^\xi  Y^{(n)}(u)\right]\le n^{-D}
\end{equation}
for large enough $n\geq n_0(\xi,D)$. In this case we use the notation $X\prec Y$ or $X= O_\prec(Y)$
and we say that $X(u)\prec Y(u)$ {\it holds uniformly} in $u\in U^{(n)}$.

\section{Main results}

We consider \emph{i.i.d.  matrices} $X \in \C^{n\times n}$ (complex case) or $X \in \R^{n\times n}$ (real case),
 i.e. $n\times n$ matrices with independent and identically distributed (i.i.d.) entries $x_{ab}\stackrel{\mathrm{d}}{=}n^{-1/2}\chi$, where the distribution of $\chi$ satisfies the following assumptions:
\begin{assumption}
	\label{ass:mainass}
	We assume that $\E \chi=0$, $\E |\chi|^2=1$, additionally $\E \chi^2=0$ in the complex case. Furthermore, we require that for any $p\in\N$, there exists a constant $C_p>0$ such that
	\begin{equation}
		\label{eq:hmb}
		\E|\chi|^p\le C_p.
	\end{equation}
\end{assumption}

Temporarily assuming that the spectrum $\{ \sigma_i\}_{i=1}^n$ of $X$ is simple,  $X$ can be diagonalized with left eigenvectors $\{\mathbf{l}_i\}_{i=1}^n$ and right eigenvectors $\{\mathbf{r}_i\}_{i=1}^n$. For example, if $\chi$ 
is a continuous random variable, then the spectrum is simple  with probability one.
We normalize the left and right  eigenvectors so that they form a bi-orthogonal basis $\<\overline{\mathbf{l}}_i, \mathbf{r}_j \>=\delta_{ij}$,  $\forall i,j=1, \ldots , n.$
Due to the non-Hermiticity of $X$,  $\{\mathbf{l}_i\}_{i=1}^n$ and $\{\mathbf{r}_i\}_{i=1}^n$ typically differ and they are not orthogonal. To quantify their relation we consider the  scale invariant  quantity, called  \emph{overlap},
\begin{align}
\label{eq:defover}
	\mathcal{O}_{ij}=\<\mathbf{l}_i, \mathbf{l}_j \>\<\mathbf{r}_i, \mathbf{r}_j \>.
\end{align}

While diagonal overlaps, i.e. $\mathcal{O}_{ii} = \| \mathbf{l}_i\|^2  \| \mathbf{r}_i\|^2 $, have been understood fairly well by now \cite{Dubach, CEHS23, Fyo18, HC23, Osmevectors}, much less is know for general $\mathcal{O}_{ij}$. Previous to the current work, the only results for $\mathcal{O}_{ij}$, with $i\ne j$, were computation of their expectation \cite{CM98} and variance \cite{Dubach} (see also \cite{Zeitouni} for the distributional limit of a related quantity) in the special case of the complex Ginibre ensemble, i.e. when $\chi$ is a standard complex Gaussian random variable in Assumption~\ref{ass:mainass}. Our main result is a high probability upper bound for the normalized version of
 $\mathcal{O}_{ij}$ 
for general i.i.d. matrices, which holds uniformly in the spectrum:
\begin{theorem}
\label{thm:maint}
Assume that $X$ is a real or complex i.i.d. matrix satisfying Assumption~\ref{ass:mainass} and that, for simplicity, its spectrum is simple (see Remark~\ref{rem:nonsimpspec}). Let $\mathcal{O}_{ij}$ be the overlaps defined in \eqref{eq:defover}. Then, with very high probability, for any small $\xi>0$, we have
\begin{equation}
\label{eq:ocorn}
\sup_{i,j\in [n]}\big(n|\sigma_i-\sigma_j|^2+1\big)\left[\frac{\big|\langle \mathbf{r}_i, \mathbf{r}_j\rangle\big|^2}{\lVert \mathbf{r}_i\rVert^2\lVert \mathbf{r}_j\rVert^2}+\frac{\big|\langle \mathbf{l}_i, \mathbf{l}_j\rangle\big|^2}{\lVert \mathbf{l}_i\rVert^2\lVert \mathbf{l}_j\rVert^2}\right]\le  n^\xi,
\end{equation}
In particular, by a Cauchy-Schwarz inequality, this implies
\begin{equation}
\label{eq:oij}
\sup_{i,j \in [n]}\big(n|\sigma_i-\sigma_j|^2+1\big)\frac{|\mathcal{O}_{ij}|}{\sqrt{\mathcal{O}_{ii}\mathcal{O}_{jj}}}\le n^\xi.
\end{equation}
\end{theorem}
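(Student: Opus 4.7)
The plan is to reduce \eqref{eq:ocorn} to the two-resolvent local law \eqref{eq:2gllaw1} for the Hermitisation $H^z$ (which will be proven separately, as Theorem \ref{thm:2G_edge}, via the zigzag strategy) and then simply unpack the spectral information near $0$. The reduction itself is soft and rests on the identification \eqref{eq:evevrel}: when $z=\sigma_k\in\mathrm{Spec}(X)$, $\lambda_1^z=0$ and the unit singular vectors $\mathbf{u}_1^z,\mathbf{v}_1^z$ of $X-z$ coincide (up to a phase) with the normalized eigenvectors $\mathbf{l}_k/\lVert \mathbf{l}_k\rVert$, $\mathbf{r}_k/\lVert \mathbf{r}_k\rVert$. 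The normalization constants cancel in the left-hand side of \eqref{eq:ocorn}, so it suffices to prove that $|\langle \mathbf{v}_1^{z_1},\mathbf{v}_1^{z_2}\rangle|^2+|\langle \mathbf{u}_1^{z_1},\mathbf{u}_1^{z_2}\rangle|^2\prec (n|z_1-z_2|^2+1)^{-1}$ for all $(z_1,z_2)=(\sigma_i,\sigma_j)$ simultaneously.

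For the first step I would use the spectral decomposition of $H^{z_a}$. Inserting it into $\Im G^{z_a}(\ii\eta_a)$ and retaining only the contribution of the zero mode (i.e.\ $\lambda_1^{z_a}=0$, which holds at the random spectral points $z_a=\sigma_{i_a}$) yields the pointwise inequality
\begin{equation}
\frac{|\langle \mathbf{v}_1^{z_1},\mathbf{v}_1^{z_2}\rangle|^2}{2 n\eta_1 \eta_2}\le \langle E_2\,\Im G^{z_1}(\ii\eta_1)\, E_2\,\Im G^{z_2}(\ii\eta_2)\rangle,
\end{equation}
with an analogous bound with $E_2$ replaced by $E_1$ controlling the left-vector overlap. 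Choosing the spectral parameters at the local eigenvalue scale, $\eta_a:=n^\xi/(n\rho_a)$ with $\rho_a:=\rho^{z_a}(0)$ the self-consistent density of $H^{z_a}$ at the origin (which vanishes as $|z_a|\to 1$, matching the correct edge scaling), the two-resolvent local law \eqref{eq:2gllaw1} provides $\langle E_2\,\Im G_1\, E_2\,\Im G_2\rangle \prec \rho_1\rho_2/|z_1-z_2|^2$, hence $|\langle \mathbf{v}_1^{z_1},\mathbf{v}_1^{z_2}\rangle|^2\prec n^{2\xi}/(n|z_1-z_2|^2)$, exactly the desired bound. Uniformity in $(i,j)$ is then obtained by a polynomial net in $(z_1,z_2)$, combined with the Lipschitz continuity of both $\Im G^z(\ii\eta)$ and of $\mathbf{u}_1^z,\mathbf{v}_1^z$ in $z$ (using single-resolvent eigenvalue rigidity to ensure a spectral gap above $\lambda_1^{z_a}$), absorbing the discretization loss into $n^\xi$; the short-range regime $|z_1-z_2|^2\lesssim 1/n$ is trivial from $|\langle \mathbf{v}_1^{z_1},\mathbf{v}_1^{z_2}\rangle|\le 1$. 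Finally, \eqref{eq:oij} is a Cauchy-Schwarz consequence of \eqref{eq:ocorn} together with $\mathcal{O}_{ii}=\lVert \mathbf{l}_i\rVert^2\lVert \mathbf{r}_i\rVert^2$.

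The hard part will not be this reduction but the two-resolvent local law itself (Theorem \ref{thm:2G_edge}). Its proof has two genuinely new difficulties relative to earlier multi-resolvent work. First, in the zig-step (method of characteristics) one must propagate optimal control of $\langle \Im G_1\,\Im G_2\rangle$ down to the optimal scale $\eta_a\sim 1/(n\rho_a)$ while extracting the extra $\rho_1\rho_2$ factor that encodes the edge smallness of $\Im G^z$ versus $G^z$. This factor is invisible if one merely bounds $\langle G_1 G_2\rangle$ and then writes $2\ii\,\Im G=G-G^*$, which costs exactly half of the decay, as observed in \cite[Part B of Theorem 3.3]{gumbel}; instead, the $\Im G\cdot \Im G$ evolution must be analyzed directly, carefully tracking the $G$--$G^*$ cancellations, with a matching refinement in the Green-function comparison (zag) step. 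Second, the zig-flow generates three-resolvent chains whose deterministic $M$-terms must be estimated optimally uniformly in the spectrum and with the correct $\rho$-dependence; this requires unravelling the cancellation structure of the matrix Dyson equation at small $\eta$ throughout the spectral disk, not only outside the spectrum as in \cite{gumbel}. I expect most of the analytic effort to concentrate on these $M$-term estimates and the flow analysis for $\langle \Im G_1\,\Im G_2\rangle$.
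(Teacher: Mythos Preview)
Your proposal is correct and follows essentially the same reduction as the paper: spectral decomposition of $\Im G^{z_a}(\ii\eta_a)$ at $z_a=\sigma_{i_a}$ bounds the singular-vector overlaps by $n\eta_1\eta_2\langle \Im G_1\Im G_2\rangle$, which the two-resolvent local law (Theorem~\ref{thm:2G_edge}) controls by $\rho_1\rho_2/\widehat\gamma$ at the scale $\eta_a\sim n^\xi\eta_{\mathrm{f}}(z_a)$, yielding \eqref{eq:ocorn}. One minor simplification over your plan: the paper obtains uniformity in $(z_1,z_2)$ purely from the Lipschitz continuity of $z\mapsto \Im G^z$ and then evaluates at the random spectral points, so invoking Lipschitz continuity of the singular vectors $\mathbf u_1^z,\mathbf v_1^z$ (and a spectral gap above $\lambda_1^z$) is unnecessary.
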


\begin{remark}
\label{rem:nonsimpspec}
For simplicity of notation we stated Theorem~\ref{thm:maint} for matrices $X$ with simple spectrum. However, we do not need this assumption in the proof (see the end of Section~\ref{sec:imprmultiG}). In fact, from the same proof it follows that for any left/right eigenvectors $\mathbf{l}, \mathbf{r}$ and $\mathbf{l}', \mathbf{r}'$ with corresponding eigenvalues $\sigma, \sigma'$, respectively, for any small $\xi>0$, we have
\begin{equation}
\left[\frac{\big|\langle \mathbf{r}, \mathbf{r}'\rangle\big|^2}{\lVert \mathbf{r}\rVert^2\lVert \mathbf{r}'\rVert^2}+\frac{\big|\langle \mathbf{l}, \mathbf{l}'\rangle\big|^2}{\lVert \mathbf{l}\rVert^2\lVert \mathbf{l}'\rVert^2}\right]\le  \frac{n^\xi}{n|\sigma-\sigma'|^2+1}.
\end{equation}
This bound holds with very high probability simultaneously for all eigenvectors of $X$.
\end{remark}

The bound \eqref{eq:oij} is optimal (up to the $n^\xi$ error), as it can be seen from the variance computation in \cite{Dubach} in the complex Gaussian case.
The optimality of the stronger bound~\eqref{eq:ocorn} follows from
 \cite{Zeitouni}, where the authors proved that $\sqrt{n} (\sigma_i - \sigma_j)
\langle\mathbf{r}_i, \mathbf{r}_j\rangle/\| \mathbf{r}_i\|\| \mathbf{r}_j\|$ 
 is  uniformly (in $n$) sub-Gaussian for a large class of invariant non-normal matrix ensembles
 and even compute its distributional limit in the complex Ginibre case. 
  While our result gives only an upper bound, it holds  for more general i.i.d. 
  matrices satisfying Assumption~\ref{ass:mainass}. In fact, \eqref{eq:ocorn} is the analog of the upper bound in the display below \cite[Eq. (5)]{Zeitouni}. While these results from \cite{Zeitouni, Dubach} are similar
  to ours, the proof methods are completely different.
  
  \begin{remark}
   In Theorem~\ref{thm:maint} we proved an (almost) optimal bound for the overlaps of matrices $X$ with i.i.d. entries as in Assumption~\ref{ass:mainass}. However, inspecting our proof it is clear that this argument can easily be extended to more general models, i.e. to deformed models $X+A$, where $X$ is an i.i.d. matrix and $A$ is a deterministic deformation belonging to a large class of matrices (see e.g. \cite[Section 2.1]{CEHS23}, \cite[Assumption 2.8]{CCEJ24} for bulk and edge eigenvectors, respectively). More precisely, the arguments of Sections~\ref{sec:zig}--\ref{sec:zag} work verbatim in this more general case as well, once the deterministic bounds in Lemma~\ref{lemma_M} are established.
\end{remark}

In Theorem~\ref{thm:maint} we show that the \emph{normalized overlap} $\mathcal{O}_{ij}/\sqrt{\mathcal{O}_{ii}\mathcal{O}_{jj}}$ can be estimated in high probability sense, in particular its tail probability is very small.
 The same is not true for $\mathcal{O}_{ij}$, because overlaps themselves 
are expected to have a heavy tail, similarly to what it has been shown for the diagonal overlaps $\mathcal{O}_{ii}$ \cite{Dubach,  Fyo18, Osmevectors}.   In the following corollary we present a bound on $\mathcal{O}_{ij}$ which holds only in a first moment sense using the analogous bounds on the diagonal overlaps $\mathcal{O}_{ii}$ given in 
\cite[Theorems 2.7, 2.9]{HC23} to account for the normalisation.

\begin{corollary}\label{thm:main}
Consider an i.i.d. matrix $X$ satisfying Assumption \ref{ass:mainass}. Furthermore, we assume that $\chi$ has a 
bounded density $g$ and  in the complex case $\Re\chi, \Im \chi$ are independent. 
Fix any small constants $\tau, \epsilon,\xi>0$  with $\xi\le \epsilon/10$
and any large constant $D>0$. Set $\mathcal{D}^{\xi}_{z}\subset \C$ to be the disk centered at $z\in \C$ with radius $n^{-1/2+\xi}$.  Then for any $z_i\in \C ~(i=1,2)$ with $|z_i|\leq 1+n^{-1/2+\tau}$ and $|z_1-z_2| \geq n^{-1/2+\epsilon}$, there is an event $\Xi$ with very high probability, i.e. $\P(\Xi)\ge 1-n^{-D}$,
  such that
\begin{align}\label{Oij}
 \E \Big(\sum_{\sigma_i\in \mathcal{D}^\xi_{z_1},\sigma_j\in \mathcal{D}^{\xi}_{z_2}} |\mathcal{O}_{ij}|\cdot {\bf 1}_\Xi \Big) &\lesssim \frac{ n^{8\xi}}{|z_1-z_2|^2}, \qquad \mbox{[Complex case]} \\
\label{OijR}
\E \Big(\sum_{\sigma_i\in \mathcal{D}^\xi_{z_1},\sigma_j\in \mathcal{D}^{\xi}_{z_2}} |\mathcal{O}_{ij}|\cdot {\bf 1}_\Xi \Big) &\lesssim \frac{ n^{8\xi}}{|z_1-z_2|^2} \cdot \frac{1}{|\Im z_1|\wedge |\Im z_2|}, \qquad \mbox{[Real case]}
\end{align}

Furthermore, in the complex case 
in the bulk regime, i.e if $|z_1|, |z_2|\le 1-\tau$,
 the event $\Xi$ is trivial, i.e. \eqref{Oij} holds without
${\bf 1}_\Xi$.  In the real case we have a special bound for the real eigenvalues $\sigma_i, \sigma_j$ 
near $z_1, z_2$:
\begin{equation}\label{OijonR}
   \E \Big(\sum_{\substack{\sigma_i\in \mathcal{D}^\xi_{z_1}\cap \R\\\sigma_j\in \mathcal{D}^{\xi}_{z_2}\cap \R}} |\mathcal{O}_{ij}|^{1/2} 
 \Big) \lesssim \frac{ n^{4\xi}}{|z_1-z_2|}.
\end{equation}
\end{corollary}

\begin{remark}\label{rem:locallaw}
	From the local circular laws (see \cite[Theorem 2.2]{BYY14_bulk},  \cite[Theorem 1.3]{BYY14}, \cite[Theorem 1.2]{Yin14}, and \cite[Theorem 2.3]{AEK19}), we know
	 that with very high probability there are eigenvalues close to $z_1$ and $z_2$. More precisely, fix any small $\tau,\xi>0$ with $\tau<\xi/10$, then for any $|z_i|\leq 1+n^{-1/2+\tau}$, there exist eigenvalues $\sigma_i \in \mathcal{D}^{\xi}_{z_1}$ and $\sigma_j \in \mathcal{D}^{\xi}_{z_2}$ with very high probability. Hence, the 
	summations in (\ref{Oij})--(\ref{OijR}) are not empty, in fact 
 they typically contain roughly $n^{2\xi}\times n^{2\xi}$ eigenvalue pairs.
	Our result holds for the corresponding overlap for any of these pairs.
	\end{remark}

We conclude this section explaining  another  interesting result one can obtain using the two--resolvent local law in \eqref{avlaw12} and using very similar techniques to the ones used to prove \eqref{avlaw12} in Section~\ref{sec:zig}. We will not present  the complete proof for brevity, but we indicate the key points and what steps need more work.

\bigskip

\textbf{Mesoscopic Central Limit Theorem uniformly in the spectrum:}
Fix a reference point $|z_0|< 1$, an exponent $a\in [0,1/2)$ and a  smooth and compactly supported test function
$f:\C\to \C$. 
Then the (centered) linear statistics of the eigenvalues $\sigma_i$ around $z_0$ at scale $n^{-a}$,
\[
L_n(f):=\sum_{i=1}^n f_{z_0,a}(\sigma_i)-\E \sum_{i=1}^n f_{z_0,a}(\sigma_i), \qquad f_{z_0,a}(z):=f\big(n^a(z-z_0)\big),
\]
satisfy a Central Limit Theorem (CLT) \cite{complex_CLT,mesoCLT, real_CLT}.  In particular,  
while in the macroscopic case~\cite{complex_CLT, real_CLT}, i.e. $a=0$,
 $f_{z_0,a}$ was allowed to be supported anywhere in the complex plane, in the mesoscopic case~\cite{mesoCLT}, i.e. $a\in (0,1/2)$, $f_{z_0,a}$ had to be supported in the bulk of the spectrum. Thus the mesoscopic CLT close to the edge
 (when $a>0$ and $|z_0|=1$) was not covered in 
 \cite{complex_CLT, mesoCLT,real_CLT}.
   Inspecting the proofs in \cite{complex_CLT, mesoCLT,real_CLT}, it is clear that their main inputs are averaged local laws for products of two resolvents (see \cite[Theorem 3.3]{mesoCLT}) as in \eqref{avlaw12}  and singular vector overlaps bound (see \cite[Theorem 3.1]{mesoCLT}) as in \eqref{overlap}, below. We claim that using these bound as an inputs, after proving a Wick theorem exactly as in \cite[Section 6]{complex_CLT} and replacing the Dyson Brownian motion analysis \cite[Section 7]{complex_CLT} with the one presented in \cite[Section 7]{SpecRadius}, one can routinely prove a mesoscopic CLT for test function supported also close to the edge. More precisely, for reference points  $|z_0|=1$ one would get that $L_n(f)$, for $a\in (0,1/2)$, converges to a centred Gaussian random variable $L(f)$ with the following variance (for simplicity we consider only the complex case)
\begin{equation}
\E\big|L(f)\big|^2=\frac{1}{4\pi}\int_\mathbb{D} \big|\nabla f_{z_0,a}(z)\big|^2\, \dd^2 z+\frac{1}{2}\sum_{k\in\mathbb{Z}}|k|\big|\widehat{f_{z_0,a}}(k)\big|^2,
\end{equation}
where $\widehat{f_{z_0,a}}$ denotes the Fourier transform of the restriction of $f_{z_0,a}$ to the unit circle, and $\mathbb{D}$ the open unit disk. In particular, one can see that compared to \cite[Theorem 2.1]{mesoCLT} one gets an additional contribution from eigenvalues close to the edge, as it was already
 observed in the macroscopic case in \cite[Theorem 2.2]{complex_CLT}. Similarly, one can extend the time--dependent version 
 of the CLT \cite[Theorem 2.3]{BCH24} down to the optimal mesoscopic scale at the edge.

\section{Proof strategy: Multi-resolvent local laws}

 In this section we recall some preliminary inputs and present the main new technical results to obtain the overlap bound in Theorem~\ref{thm:maint}. More precisely, in Section~\ref{sec:singleG} we recall the single resolvent local law and the rigidity estimate for the eigenvalues. Then, in Section~\ref{sec:multiG} we present local laws for products of two resolvents. Finally, in Section~\ref{sec:imprmultiG} we show that the in the regime when the limiting density of states is small we can improve the averaged local law from Section~\ref{sec:multiG}, and use this result to prove Theorem~\ref{thm:maint} and Corollary~\ref{thm:main}. \nc

\subsection{Local laws for the resolvent}
\label{sec:singleG}
Note that $z\in \C$ lies in the spectrum of $X$ if only and if the least singular value of $X-z$ is zero. Hence we link the eigenvalues (resp. eigenvectors) of $X$ with the singular values (resp. singular vectors) of matrix $X-z$. 
As already mentioned in the Introduction, let $(\lambda^z_{i})_{i=1}^n$ denote the singular values of $X-z$ labelled in a non-decreasing order, and  $\{\mathbf{u}_i^z\}_{i=1}^n, \{\mathbf{v}_i^z\}_{i=1}^n$ denote the corresponding left and right singular vectors of $X-z$ normalised such that $\|\uu^z_i\|^2=\|\vv^z_i\|^2=1/2$. In the same spirit of \cite{Girko1984}, we consider the Hermitisation of $X$, \ie
\begin{align}\label{def_H}
	H^{z}:=\begin{pmatrix}
		0  &  X-z  \\
		X^*-\overline{z}   & 0
	\end{pmatrix} \in \C^{2n\times 2n},\qquad z \in \C.
\end{align} 
 Simple algebra (also called  {\it chiral symmetry}) shows that $H^{z}$ has a symmetric spectrum given by $(\lambda^z_{\pm i})_{i=1}^n= (\pm \lambda^z_{i})_{i=1}^n$, with the corresponding (orthonormal) eigenvectors given by $\mathbf{w}^z_{\pm i}=
((\mathbf{u}^z_i)^*,\pm (\mathbf{v}^z_i)^*)^* \in \C^{2n}$. Note that $\mathbf{u}^z_i, \mathbf{v}^z_i$
are the left and right singular vectors of $X-z$.
 Eigenvalues of $H^z$ in general have no direct
  relation with the eigenvalues of $X$, however, from the trivial relation~\eqref{eq:evevrel} it is clear that
eigenvectors of $X$ with eigenvalue $z$ and the singular vectors of $H^z$ with singular value zero coincide.

Let $G^z$ be the resolvent of the Hermitisation of $X-z$, \ie 
\begin{align}\label{def_G}
	G^{z}(w):=(H^{z}-w)^{-1}, 
	\qquad H^{z}=
	\begin{pmatrix}
		0  &  X-z  \\
		X^*-\overline{z}   & 0
	\end{pmatrix},
\qquad w \in \C\setminus \R, \quad z \in \C.
\end{align}
Define the deterministic approximation of $G^z(w)$ as an $(2n)\times (2n)$ block constant matrix given by
\begin{align}\label{Mmatrix}
	M^{z}(w):=\begin{pmatrix}
		m^{z}(w) I_n  &  -zu^z(w)I_n  \\
		-\overline{z}u^z(w)I_n   & m^{z}(w)I_n
	\end{pmatrix},\quad \mbox{with} \qquad  u^z(w):=\frac{{m}^z(w)}{w+ {m}^z(w)},
\end{align}
where ${m}^z(w)$ is the unique solution of the scalar cubic equation with side condition
\begin{align}
	\label{m_function}
	-\frac{1}{{m}^z(w)}=w+{m}^z(w)-\frac{|z|^2}{w+{m}^z(w)}, \qquad\quad \mathrm{Im}[m^z(w)]\mathrm{Im}w>0.
\end{align}
In fact, $M^z(w)$ is the unique solution of  the {\it Matrix Dyson equation} (see e.g. \cite{MDE})
\begin{equation}\label{MDE}
	- [M^z(w)]^{-1} = w + Z+\mathcal{S}[ M^z(w)], \qquad Z:=\left(\begin{matrix}
		0 & z \\
		\overline{z} & 0
	\end{matrix}\right),
\end{equation}
satisfying the side condition $\Im M^z(w) \cdot \Im w >0$, where
\begin{equation}\label{cov}
	\mathcal{S}[\cdot]:=\langle \cdot E_+\rangle E_+-\langle \cdot E_-\rangle E_- = 2\langle \cdot E_1\rangle E_2+2\langle \cdot E_2\rangle E_1, \qquad \mathcal{S}: \C^{2n\times 2n}
	\to \C^{2n\times 2n},
\end{equation}
is the {\it covariance operator} of the Hermitisation of $X$. Additionally, we define the {\it self-consistent eigenvalue density}  
of $H^z$ by (see e.g. \cite{AEK18})
\begin{align}\label{rho_0}
	\rho^z(E):=\lim_{\eta\to 0^+}\frac{1}{\pi} \Im \<M^z(E+\ii \eta)\> , \quad E\in \R.
\end{align}
As summarised in \cite[Section 3.1]{SpecRadius} (see references therein), the density $\rho^z(E)$ has the following properties:
\begin{enumerate}
	\item[1)] if $|z|<1$, then $\rho^z(E)$ has a local
	minimum at $E=0$ of height $\rho^z(0) \sim (1-|z|^2)^{1/2}$; 
	
	\item[2)] if $|z|=1$, then $\rho^z(E)$ has an exact cubic cusp singularity at $E=0$; 
	
	\item[3)]  if $|z|>1$, then there exists a small gap $[-\Delta/2,\Delta/2]$ in the support of the symmetric density $\rho^z(E)$, with $\Delta \sim (|z|^2-1)^{3/2}$.
\end{enumerate}
Moreover, extending to the complex plane, if $|z|\le 1$, then for any $w=E+\ii\eta$, we have
\begin{align}\label{rho_E_bulk}
	 \rho^z(w):=\pi^{-1} |\Im \<M^z(w)\>|\sim (1-|z|^2)^{1/2} +(|E|+\eta)^{1/3}, \qquad |E|\leq c,\quad 0\leq \eta\leq 1,
\end{align}
for some small $c>0$, 
while in the complementary regime
$|z|> 1$, for any $w=\frac{\Delta}{2}+\kappa+\ii \eta$, we have 
\begin{align}\label{rho_E}
	\rho^z(w) \sim \begin{cases}
		(|\kappa|+\eta)^{1/2} (\Delta+|\kappa|+\eta)^{-1/6}, &\quad \kappa \in [0,c]\\ 
		\frac{\eta}{(\Delta+|\kappa|+\eta)^{1/6}(|\kappa|+\eta)^{1/2}} , &\quad  \kappa \in [-\Delta/2,0]
	\end{cases},
	\quad 0\leq \eta\leq 1.  
\end{align}
In this paper we focus on the imaginary axis, \ie $w=\ii \eta$, and we may often drop the superscript $z$ and the argument $w=\ii \eta$ for brevity. For instance, we may write $m=m^{z}(\ii \eta)$ for brevity. In particular,
\begin{align} \label{rho}
	\rho:=\frac{|\Im m|}{\pi} \sim \begin{cases}
		\eta^{1/3}+|1-|z|^2|^{1/2} , &\qquad |z| \leq 1\\
		\frac{\eta}{|1-|z|^2|+\eta^{2/3}}, &\qquad |z| > 1
	\end{cases},
	\qquad  0\leq \eta\leq 1.
\end{align}

Now we state the local law for the resolvent; see \cite[Theorem 3.4]{BYY14_bulk} for $|z|\leq 1-\tau$ and \cite[Theorem 3.1]{SpecRadius} for $||z|-1|\leq \tau$ (see also \cite{BYY14}), where $\tau>0$ is a small constant.
\begin{theorem}		
	\label{local_thm} 
	There is sufficiently small constants $\tau,\tau'>0$ such that for any $z\in\C$ with $|z|\leq 1+\tau$ and 
	for any $w=E+\ii\eta$ with $|E|\le \tau'$ we have
	\begin{align}
		\big| \langle \mathbf{x},  (G^{z}(w)-M^{z}(w)) \mathbf{y} \rangle\big| \prec \; & \|\mathbf{x}\| \|\mathbf{y}\| \left( \sqrt{\frac{\rho^z(w)}{n|\eta|}}+\frac{1}{n|\eta|}\right),\label{entrywise}\\
		\big|\big\<\big( G^{z}(w)-M^{z}(w)\big)A\big\>\big| \prec \; & \frac{\|A\|}{n|\eta|},
		\label{average}
	\end{align}
	uniformly in spectral parameter $|\eta|>0$, deterministic vectors $\mathbf{x}, \mathbf{y}\in\C^{2n}$, and matrices $A\in \C^{2n \times 2n}$.
\end{theorem}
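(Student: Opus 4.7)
The plan is to prove Theorem \ref{local_thm} by following the now-standard \emph{Schur complement / cumulant expansion plus stability of the MDE} strategy, adapted to the Hermitization $H^z$ as was done in \cite{BYY14_bulk, BYY14, SpecRadius}. Since the statement is cited from these works, I would only re-derive it and emphasize the features that we will need later (sharp $\rho$-dependence and uniformity in $|z|\le 1+\tau$).

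First, I would set up the stochastic self-consistent equation. Writing $W:=\begin{pmatrix} 0 & X \\ X^* & 0 \end{pmatrix}$ one has $(G^z)^{-1}-(M^z)^{-1}=W+\mathcal{S}[M^z]$, so multiplying by $M^z$ on the left and $G^z$ on the right gives
\begin{equation*}
G^z - M^z \;=\; -M^z W G^z \;-\; M^z \mathcal{S}[G^z-M^z]\, G^z \;-\; M^z \mathcal{S}[M^z](G^z-M^z).
\end{equation*}
Rewriting this as $(1-\mathcal{C}_{M^z})[G^z-M^z] = -M^z W G^z - M^z\mathcal{S}[G^z-M^z](G^z-M^z)$, with $\mathcal{C}_{M^z}[\cdot]:=M^z\mathcal{S}[\cdot]M^z$, reduces the problem to (i) an estimate on the purely stochastic term $M^z W G^z$, and (ii) a quantitative inversion of $1-\mathcal{C}_{M^z}$.

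For (i) I would use the cumulant expansion of $\E[W f(W)]$ (since $\chi$ is not Gaussian in general) to reduce the leading order to the deterministic tadpole $-\mathcal{S}[G^z]G^z$, generating an iterative self-bound. The fluctuation averaging lemma then upgrades the entrywise $(n\eta)^{-1/2}$ bound to the averaged $(n\eta)^{-1}$ bound in \eqref{average}. For (ii) I would analyze the spectrum of $\mathcal{C}_{M^z}$ from the scalar cubic equation \eqref{m_function}, separately in each of the three regimes of $|z|$ listed after \eqref{rho_0}; this produces the correct density profile \eqref{rho_E_bulk}--\eqref{rho_E}, and in particular the smallness factor $\rho^z(w)$ entering \eqref{entrywise} emerges from the size of the inverse of $1-\mathcal{C}_{M^z}$ applied to the image of $\mathcal{S}$.

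Third, I would run a bootstrap in $\eta$: start at $\eta\sim 1$ where the bounds are trivial, and decrease $\eta$ along a fine grid, closing a continuity argument at each step. Concretely, an a priori crude bound of the form $\|G^z-M^z\|\le n^{-\epsilon}$ at $\eta_k$ upgrades via the self-consistent equation and the stability inversion to the optimal $\sqrt{\rho/(n\eta)}+(n\eta)^{-1}$ at $\eta_{k-1}$, with the gap closed because $G^z$ is Lipschitz in $\eta$ with an explicit local constant $\eta^{-2}$.

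The main obstacle is the stability analysis at and near the cubic cusp $|z|=1$, $E=0$, where $1-\mathcal{C}_{M^z}$ is maximally degenerate and the unstable direction carries the $(|E|+\eta)^{1/3}$ scaling: one must project the self-consistent equation onto this unstable mode and track error terms through a cubic expansion without losing a factor of $\rho^z$, while simultaneously controlling the two other (stable) modes uniformly. The companion difficulty is obtaining a \emph{uniform} bound over the full range $|z|\le 1+\tau$: the stability operator changes qualitatively as one crosses $|z|=1$, and the matching between the bulk form \eqref{rho_E_bulk} and the gap form \eqref{rho_E} must be performed so that the implicit constants in \eqref{entrywise}--\eqref{average} do not blow up near $|z|=1$.
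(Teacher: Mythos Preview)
The paper does not prove Theorem~\ref{local_thm} at all: it is stated as a known input and attributed to \cite[Theorem~3.4]{BYY14_bulk} for the bulk regime $|z|\le 1-\tau$ and to \cite[Theorem~3.1]{SpecRadius} for the edge/cusp regime $||z|-1|\le\tau$ (see also \cite{BYY14}). Your sketch is therefore not being compared against any argument in this paper, but rather against the proofs in those references; the strategy you outline (self-consistent equation, cumulant expansion for the stochastic term, stability inversion of $1-\mathcal{C}_{M^z}$ with careful cusp analysis, bootstrap in $\eta$) is indeed the standard route taken there, and you have correctly identified the main technical obstacle as the uniform stability analysis near $|z|=1$.
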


 Using standard arguments in e.g. \cite[Corollary 1.10-1.11]{AEK17},  we obtain the following rigidity estimates on the eigenvalues from Theorem \ref{local_thm}; see also \cite[ Corollary 3.2]{SpecRadius} for $|z|>1$ only.

\begin{corollary}[
		]\label{cor:rigidity}
	Fix a small constant $\tau>0$ and pick $z$ such that $ |z|\le 1+\tau $. Then 
	there exists a small constant $c>0$ such that
	\begin{equation}\label{rigidity}
		|\lambda_i^z-\gamma_i^z|\prec \max\left\{\frac{1}{|i|^{1/4}n^{3/4}+\sqrt{(1-|z|^2)_+}n}, \frac{\Delta^{1/9}}{n^{2/3}|i|^{1/3}}\right\}, \qquad |i|\leq cn
	\end{equation}
	with $\Delta$ denoting the size of the gap around zero
	in the support of $\rho^z$ (in particular $\Delta=0$ if $|z|\leq 1$), where $\gamma_i^z$ is the $i$-th quantile of $\rho^z$ given by
	\begin{equation}\label{quantile}
		\int_0^{\gamma_i^z}\rho^z(x)\,\dd x=\frac{i}{2n}, \qquad \gamma_{-i}^z:=-\gamma_i^z, \qquad i\in [n].
	\end{equation}	
	 In addition, there exists a small constant $c'>0$ such that 
	\begin{equation}\label{rigidity3}
		\Big| \#\{j: E_1 \leq \lambda^z_j \leq E_2\}-2n \int_{E_1}^{E_2} \rho^z(x) \dd x \Big| \prec 1, \qquad |E_i|\leq c', \quad i=1,2.
	\end{equation}

\end{corollary}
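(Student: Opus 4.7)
The plan is to follow the standard two-step route from an averaged single-resolvent local law to eigenvalue rigidity, in the spirit of \cite{AEK17} which is already cited in the statement. First I would establish the counting function bound \eqref{rigidity3}, and then invert it to obtain the individual eigenvalue bound \eqref{rigidity} via the quantile function, using the explicit density behavior recorded in \eqref{rho_E_bulk}--\eqref{rho_E}.

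For \eqref{rigidity3}, the idea is a Helffer--Sj\"ostrand calculus argument. Let $\mu_n^z := \frac{1}{2n}\sum_{i} \delta_{\lambda_i^z}$ be the empirical measure of $H^z$. For a fixed window $[E_1,E_2]$ with $|E_i|\le c'$, choose a smooth approximation $f$ of $\mathbf{1}_{[E_1,E_2]}$ that equals $1$ on $[E_1,E_2]$ and vanishes outside an $\eta_0$-neighborhood of it, with $\eta_0 \sim 1/n$, and with derivatives obeying $|f^{(k)}| \lesssim \eta_0^{-k}$. By Helffer--Sj\"ostrand,
\begin{equation*}
\int f \, \dd(\mu_n^z - \rho^z) \;=\; \frac{1}{\pi}\int_\C \partial_{\bar w}\widetilde{f}(w)\,\bigl\langle G^z(w)-M^z(w)\bigr\rangle\, \dd^2 w,
\end{equation*}
where $\widetilde{f}$ is a quasi-analytic extension supported in a thin strip around $\R$. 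Splitting the $w$-integral according to $|\Im w|\lessgtr \eta_0$ and using the averaged law \eqref{average} to bound $\langle G^z - M^z\rangle \prec 1/(n|\eta|)$, together with integration by parts in the $|\Im w| \le \eta_0$ regime, one obtains the $\prec 1$ bound in \eqref{rigidity3}. The only subtlety is controlling the boundary contributions at the spectral edge (when $E_i$ is close to $\pm\Delta/2$ for $|z|>1$), but this is handled by truncating $\widetilde f$ so that boundary terms carry the right density factor.

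To get \eqref{rigidity} from \eqref{rigidity3}, set $E_1 = \gamma_i^z$ and let $E_2 = \lambda_i^z$ (or vice versa). The counting estimate then gives
\begin{equation*}
\Bigl| 2n \int_{\gamma_i^z}^{\lambda_i^z} \rho^z(x)\,\dd x \Bigr| \prec 1.
\end{equation*}
From here the rigidity scale is determined by a lower bound on $\rho^z$ near $\gamma_i^z$. For $|z|\le 1$, the density near the origin satisfies $\rho^z(x) \gtrsim (1-|z|^2)^{1/2} + |x|^{1/3}$, and estimating $\gamma_i^z$ from $i/(2n) = \int_0^{\gamma_i^z}\rho^z$ yields a quantile at worst of order $(i/n)^{3/4}$ (cusp regime) or $i/((1-|z|^2)^{1/2}n)$ (bulk of the local minimum); inverting gives $|\lambda_i^z-\gamma_i^z| \prec \max\{(|i|^{1/4}n^{3/4})^{-1}, ((1-|z|^2)^{1/2}n)^{-1}\}$. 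For $|z|>1$ the density has a square-root edge at $\Delta/2$ with prefactor $\sim (\Delta + |\kappa|+\eta)^{-1/6}$, from which the standard Tracy--Widom-scale inversion produces exactly the $\Delta^{1/9}n^{-2/3}|i|^{-1/3}$ term of \eqref{rigidity}.

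The main obstacle, such as it is, is bookkeeping rather than a conceptual difficulty: one must carry the density profile \eqref{rho_E_bulk}--\eqref{rho_E} uniformly through the three regimes $|z|<1$, $|z|=1$, $|z|>1$ so that the error in \eqref{rigidity3} translates correctly. In particular, near the cubic cusp at $|z|=1$ the inverse density $1/\rho^z(\gamma_i^z)$ degenerates and one has to verify that the interpolation between the bulk bound $1/(n\rho)$ and the edge bound $n^{-3/4}|i|^{-1/4}$ is sharp; this is exactly the maximum in \eqref{rigidity}. The proof of \eqref{rigidity3} for $|z|>1$ is essentially the content of \cite[Corollary 3.2]{SpecRadius}, and for $|z|\le 1+\tau$ the extension is a routine modification once Theorem~\ref{local_thm} is in hand.
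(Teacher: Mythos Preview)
Your proposal is correct and follows exactly the standard route the paper itself invokes: the paper does not give a self-contained proof but simply cites \cite[Corollary~1.10--1.11]{AEK17} (and \cite[Corollary~3.2]{SpecRadius} for $|z|>1$) as yielding the rigidity estimates from the averaged local law in Theorem~\ref{local_thm}. Your Helffer--Sj\"ostrand argument for \eqref{rigidity3} and the subsequent inversion via the density asymptotics \eqref{rho_E_bulk}--\eqref{rho_E} to obtain \eqref{rigidity} is precisely the content of those references.
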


\subsection{Local laws for multiple resolvents}
\label{sec:multiG}

 The main technical result of this manuscript are  local laws for the product of two Green functions 
 $$\ga(\ii\eta_1) A \gb(\ii \eta_2), \qquad A\in \C^{2n \times 2n}, \quad G^{z_i}(\ii \eta_i) \in \C^{2n \times 2n},~i=1,2,$$ 
\nc
 with different parameters $z_i \in \C$ and $\eta_i \in \R \setminus \{0\}~(i=1,2)$, which yield the overlap bounds of the eigenvectors stated in Theorem \ref{thm:maint}. Before we state the local laws, we first introduce the deterministic approximation matrices of $\ga(\ii\eta_1) A \gb(\ii \eta_2)$, \ie
\begin{align}
	\label{eq:defM12}
	M_{12}^A:=M^A_{12}(\ii \eta_1,z_1,\ii \eta_2,z_2)= \mathcal{B}^{-1}_{12}\Big[M^{z_1}(\ii \eta_1) A M^{z_2}(\ii \eta_2)\Big],
\end{align}
where the two--body stability operator  $\mathcal{B}_{12}:\C^{2n\times 2n}\to\C^{2n\times 2n}$ is given by
\begin{equation}
	\label{eq:defstabop}
	\mathcal{B}_{12}:=\mathcal{B}_{12}(z_1,\omega_1,z_2,\omega_2)=1-M^{z_1}(\omega_1)\mathcal{S}[\cdot] M^{z_2}(\omega_2),
\end{equation}
 with the covariance operator $\mathcal{S}[\cdot]$ defined in (\ref{cov}); see also (\ref{eq:defm12}) later for the definition of $\mathcal{B}_{ij}$ with general $i,j \in\N$. Using the identity $\Im G=\frac{1}{2\ii}(G-G^*)$ twice and the definition of $M^{A}_{12}$ in (\ref{eq:defM12}), we further define the deterministic approximating matrix of $\Im G^{z_1}(\ii \eta_1) A\Im G^{z_2}(\ii \eta_2)$ as a fourfold linear combination of $M^A_{12}(\pm \ii\eta_1, z_1, \pm \ii \eta_2, z_2)$ as defined in (\ref{eq:defM12}), \ie
\begin{align}\label{Im_M_def}
	\wh M_{12}^{A}:=&\wh M_{12}^{A}(\ii \eta_1,z_1,\ii \eta_2,z_2)\nonumber\\
	=&\frac{1}{4} \big( M^A_{12}(\ii\eta_1, z_1, -\ii \eta_2, z_2)+M^A_{12}(-\ii\eta_1, z_1, \ii \eta_2, z_2)-M^A_{12}(\ii\eta_1, z_1, \ii \eta_2, z_2)-M^A_{12}(-\ii\eta_1, z_1, -\ii \eta_2, z_2) \big).
\end{align}
\nc
To approximate the product $\ga(\ii\eta_1) A_1 \gb(\ii \eta_2) A_2 \ga(\ii\eta_1)$, we also define
\begin{align}\label{eq:defM121}
	M_{121}^{A_1,A_2}:=\mathcal{B}_{11}^{-1}\left[M^{z_1}(\ii \eta_1) A_1M_{21}^{A_2}+M^{z_1}(\ii \eta_1)\mathcal{S}\big[M_{12}^{A_1}\big]M_{21}^{A_2}\right],
\end{align}
with $M_{21}^A=M^A_{21}(\ii \eta_2,z_2,\ii \eta_1,z_1)$ defined as in (\ref{eq:defM12}). 

\bigskip

The proof of the following lemma is presented in Appendix \ref{app:lemma_M}.
\begin{lemma}\label{lemma_M}
For any bounded deterministic matrices $A_1,A_2 \in \C^{2n \times 2n}$, we have
	\begin{align}\label{M_bound}
		\|M^{A_1}_{12}(\ii \eta_1,z_1,\ii \eta_2,z_2)\| \lesssim \frac{1}{\gamma},
	\end{align}
\begin{align}\label{Im_M_bound}
	\big|\<\wh M^{A_1}_{12} (\ii \eta_1,z_1,\ii \eta_2,z_2) A_2 \>\big| \lesssim \frac{\rho_1\rho_2}{\wh \gamma},
\end{align}
\begin{align}\label{M_3_bound}
	\|M^{A_1,A_2}_{121}(\ii \eta_1, A_1,\ii \eta_2, A_2,\ii \eta_1)\|  \lesssim \frac{1}{\eta_*\gamma},
\end{align}
uniformly for any $|z_i|<10$ and  $0<|\eta_i|<10$, where $\eta_*:=\min_{i=1,2}|\eta_i|$ and $\rho^*:=\max_{i=1,2}|\rho_i|$ with $\rho_i:=\pi^{-1}|\Im m^{z_i}(\ii\eta_i)|$\nc, and the following parameters:
\begin{align}\label{parameter}
	\gamma=\gamma(\ii \eta_1,z_1,\ii \eta_2,z_2):=\frac{|z_1-z_2|^2+\rho_1|\eta_1|+\rho_2|\eta_2|+\left(\frac{\eta_1}{\rho_1}\right)^2+\left(\frac{\eta_2}{\rho_2}\right)^2}{|z_1-z_2|+\rho_1^2+\rho_2^2+\frac{\eta_1}{\rho_1}+\frac{\eta_2}{\rho_2}},
\end{align}
\begin{align}\label{parameter2}
	\wh\gamma=\wh\gamma(\ii \eta_1,z_1,\ii \eta_2,z_2):=|z_1-z_2|^2+\rho_1|\eta_1|+\rho_2|\eta_2|+\left(\frac{\eta_1}{\rho_1}\right)^2+\left(\frac{\eta_2}{\rho_2}\right)^2.
\end{align}

\end{lemma}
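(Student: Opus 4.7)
The plan is to invert the stability operator $\mathcal{B}_{12}$ explicitly using its low-rank structure, and then to extract the various decay factors in $\gamma$, $\widehat\gamma$, and $\eta_\ast \gamma$ by careful asymptotic analysis of the scalar quantities $m^{z_i}(\ii\eta_i)$ and $u^{z_i}(\ii\eta_i)$.

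First I would exploit the fact that, by \eqref{cov}, the covariance operator $\mathcal{S}$ maps $\C^{2n\times 2n}$ into $\mathrm{span}\{E_1,E_2\}$, so the operator $M^{z_1}(\ii\eta_1)\,\mathcal{S}[\cdot]\,M^{z_2}(\ii\eta_2)$ has rank two and $\mathcal{B}_{12}$ is a rank-two perturbation of the identity. Writing $\mathcal{B}_{12}[X]=Y$ and applying $\langle \cdot\, E_i\rangle$ for $i=1,2$ reduces the inversion to a $2\times 2$ linear system for $(\langle X E_1\rangle,\langle X E_2\rangle)$ with coefficient matrix built from the four scalars $\langle M^{z_1}(\ii\eta_1)\, E_i\, M^{z_2}(\ii\eta_2)\, E_j\rangle$, which I compute in closed form from the explicit block structure of $M^z$ in \eqref{Mmatrix}. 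The solution is then $X=Y+2\alpha_1 M_1 E_2 M_2+2\alpha_2 M_1 E_1 M_2$, where $(\alpha_1,\alpha_2)$ is the solution of the $2\times 2$ system, and substituting this into \eqref{eq:defM12} and taking operator norm yields a bound of the form $\|Y\|/|\det(\mathcal{B}_{12}|_{\mathrm{rk}\,2})|$ for the leading singular contribution.

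The main obstacle is evaluating this $2\times 2$ determinant and showing that it equals (up to constants) the denominator appearing in \eqref{parameter}. Here I would use the MDE identity derived from \eqref{m_function}, namely $1+\ii\eta\, m^z+(m^z)^2=|z|^2 u^z$, to rewrite $1-m_1 m_2-|z_1|^2 u_1 |z_2|^2 u_2$ type terms so that a factor of $|z_1-z_2|^2$ emerges algebraically together with explicit $\ii\eta_i\, m_i$ contributions. In the near-bulk/edge regime, $m^z(\ii\eta)=\cos\theta+\ii\rho+O(\eta/\rho)$ with $\cos\theta$ determined by $|z|$; expanding the $2\times 2$ determinant in this parametrisation, the two eigenvalues split into one of size $\sim 1$ and one matching the ratio $\gamma$ in \eqref{parameter}, where the numerator collects $|z_1-z_2|^2+\rho_i\eta_i+(\eta_i/\rho_i)^2$ (the true spectral gap of the relevant block of $\mathcal{B}_{12}$) and the denominator collects the normalising scale $|z_1-z_2|+\rho_i^2+\eta_i/\rho_i$ coming from the magnitude of the $\mathrm{rk}\,2$ coefficient vectors $M_1 E_j M_2$.

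For the $\widehat M_{12}$ bound \eqref{Im_M_bound}, I would expand $\Im G^{z_i}=(G^{z_i}-(G^{z_i})^{\ast})/(2\ii)$ so that $\widehat M_{12}^{A_1}$ is a signed sum of four $M_{12}^{A_1}(\pm\ii\eta_1,z_1,\pm\ii\eta_2,z_2)$ terms. Each term is bounded by $1/\gamma$ from the previous paragraph, but the $\pm$ alternation produces explicit differences $m^z(\ii\eta)-m^z(-\ii\eta)=2\ii\,\Im m^z=2\pi\ii\rho$ and the analogous identity for $u^z$; applied to the coefficient matrix and vectors from the rank-two reduction, these differences supply the two missing factors $\rho_1\rho_2$ in the numerator, and the normalising denominator in the definition of $\gamma$ drops out in the cancellation, leaving $\widehat\gamma$ as in \eqref{parameter2}.

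For the three-resolvent estimate \eqref{M_3_bound}, I would iterate: the arguments $M^{z_1}(\ii\eta_1) A_1 M_{21}^{A_2}$ and $M^{z_1}(\ii\eta_1)\mathcal{S}[M_{12}^{A_1}]M_{21}^{A_2}$ inside the square bracket of \eqref{eq:defM121} are each bounded by $1/\gamma$ using \eqref{M_bound} and the fact that $\mathcal{S}[M_{12}^{A_1}]$ is $O(1/\gamma)$ in operator norm by its definition. Applying $\mathcal{B}_{11}^{-1}$ (same $z$, same $\eta$) produces the extra factor $1/\eta_\ast$, since the coincident-parameter stability operator has a small eigenvalue of order $\eta_\ast$ (Ward identity: $M^{z}\mathcal{S}[\Im M^z]M^z$ is an eigenvector with eigenvalue proportional to $\eta$). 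Combining these two bounds gives the claimed $1/(\eta_\ast\gamma)$.
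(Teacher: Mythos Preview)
Your approach to \eqref{M_bound} and \eqref{Im_M_bound} is essentially the same as the paper's: reduce $\mathcal{B}_{12}^{-1}$ to a $2\times 2$ eigenvalue problem with eigenvalues $\beta_\pm$, compute $\beta_+\beta_-\sim\widehat\gamma$ explicitly, and for the $\Im$-version use $m(-\ii\eta)=-m(\ii\eta)$, $u(-\ii\eta)=u(\ii\eta)$ to extract the factor $m_1m_2\sim\rho_1\rho_2$ from the sign flip. One inaccuracy: at the edge \emph{both} $\beta_\pm$ are small, not one of them $\sim 1$; the bound $\min|\beta_\pm|\gtrsim\gamma$ comes not from ``normalising by the magnitude of $M_1E_jM_2$'' but from combining $|1/\beta_+-1/\beta_-|\lesssim(|z_1-z_2|\vee\rho_1\rho_2)/\widehat\gamma$ with $|1/\beta_++1/\beta_-|\lesssim(\rho_i^2+\eta_i/\rho_i)/\widehat\gamma$, which is how the denominator of $\gamma$ actually appears.

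Your argument for \eqref{M_3_bound} has a genuine gap. You claim that both arguments of $\mathcal{B}_{11}^{-1}$ in \eqref{eq:defM121} are $O(1/\gamma)$, but the second one is not: $\|\mathcal{S}[M_{12}^{A_1}]\|\lesssim 1/\gamma$ \emph{and} $\|M_{21}^{A_2}\|\lesssim 1/\gamma$, so $\|M_1\mathcal{S}[M_{12}^{A_1}]M_{21}^{A_2}\|\lesssim 1/\gamma^2$. Feeding this into $\|\mathcal{B}_{11}^{-1}\|\lesssim\rho_1/\eta_1$ gives $\rho_1/(\eta_1\gamma^2)$, which is \emph{not} $\lesssim 1/(\eta_*\gamma)$: e.g.\ for $z_1=z_2$ in the bulk and $\eta_1=\eta_2=\eta$, one has $\gamma\sim\eta$, so your bound is $1/\eta^3$ while the target is $1/\eta^2$. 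The paper does not proceed by naive operator-norm products here. Instead it tests $M_{121}^{A_1,A_2}$ against the two bad directions $E_\pm$ of $\mathcal{B}_{11}$ (eigenvalues $\beta_*\sim\eta_1/\rho_1$ and $\beta\sim\eta_1/\rho_1+\rho_1^2$), expands $E_\pm M_1 E_\pm$ in the basis $\{E_+,E_-,F,F^*\}$, and computes $(\mathcal{B}_{12}^*)^{-1}[E_\pm]$, $(\mathcal{B}_{12}^*)^{-1}[F^{(*)}]$ explicitly. The point is that the specific combination
\[
\langle M_{12}^{A_1}E_+\rangle\,\langle E_\pm M_1 E_+ M_{21}^{A_2}\rangle-\langle M_{12}^{A_1}E_-\rangle\,\langle E_\pm M_1 E_- M_{21}^{A_2}\rangle
\]
exhibits a cancellation (the leading $1/\widehat\gamma^2$ pieces combine to $O(\rho^2/(\eta\widehat\gamma))$ rather than the naive $O(1/(\eta\gamma^2))$), and only after dividing by the appropriate $\beta$ or $\beta_*$ does one land on $1/(\eta_*\gamma)$. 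Without identifying this cancellation your bound is off by a full factor of $1/\gamma$.
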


\begin{theorem}\label{thm:2G}
	 Fix small constants $\epsilon,\tau>0$. Then, for any deterministic matrices $\|A_i\| \lesssim 1$ and vectors $\|\xx\|, \|\yy\| \lesssim 1$, the following isotropic local law: 
	\begin{equation}\label{enlaw12}
		\Big| \big\langle \xx, \big(G^{z_1}(\ii \eta_1)A_1G^{z_2}(\ii \eta_2)-M_{12}^{A_1} \big) \yy \big \rangle\Big| \prec 
		\frac{1}{\sqrt{n \gamma}\eta_*},
	\end{equation}
 and the averaged local law: 
	\begin{equation}\label{avlaw12}
	\Big|\big\langle \big(G^{z_1}(\ii \eta_1) A_1 G^{z_2}(\ii \eta_2)-M_{12}^{A_1}\big)A_2 \big\rangle\Big|\prec
	\frac{1}{\sqrt{n\ell}\gamma }\wedge \frac{1}{n \eta_*^2},
\end{equation}
hold uniformly for any $ |z_i|\leq 1+\tau$ and $\ell \geq n^{-1+\epsilon}$ with $\eta_*:=\min_{i=1,2}|\eta_i|$, $\ell:=\min_{i=1,2}\rho_i|\eta_i|$, and the deterministic matrix $M^{A}_{12}=M^A_{12}(\ii\eta_1,z_1,\ii \eta_2,z_2)$ is defined in (\ref{eq:defM12}).
\end{theorem}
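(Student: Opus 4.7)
\medskip

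\noindent\textbf{Plan.} I will prove Theorem \ref{thm:2G} by the \emph{zigzag strategy} developed in \cite{mesoCLT, CEH23} and outlined in the introduction: first establish the two-resolvent law at \emph{large} spectral parameters $|\eta_i|\sim 1$, then propagate the estimates down to the optimally small scale along a characteristic flow that adds a small Gaussian component to $X$ (\emph{zig-step}), and finally remove the added Gaussian component via a Green's function comparison argument (\emph{zag-step}). The isotropic bound \eqref{enlaw12} and the averaged bound \eqref{avlaw12} are handled in parallel, because the zig-step typically requires both as simultaneous inputs to close the hierarchy of estimates.

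\medskip

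\noindent\textbf{Initial global law.} For $|\eta_1|,|\eta_2|\sim 1$ the resolvents satisfy $\|G^{z_i}(\ii\eta_i)\|\lesssim 1$ deterministically. In this regime a standard cumulant expansion of $\<\xx,(G_1A_1G_2-M_{12}^{A_1})\yy\>$ and of $\<(G_1A_1G_2-M_{12}^{A_1})A_2\>$ against the entries of $X$ generates, at leading order, the Dyson-type equation for $M_{12}^{A_1}$, which is then solved by inverting the two-body stability operator using $\|\mathcal{B}_{12}^{-1}\|\lesssim 1/\gamma$ from Lemma \ref{lemma_M}. Higher cumulant contributions and the fluctuations of individual resolvents are controlled by the single-resolvent local law \eqref{entrywise}--\eqref{average}. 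This yields the desired bounds in the regime $\eta_i\sim 1$.

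\medskip

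\noindent\textbf{Zig-step.} To push the bound down to small $\eta_i$ I consider the matrix Ornstein--Uhlenbeck flow $\dd X_t = -\tfrac{1}{2}X_t\,\dd t + \dd B_t$, so that $X_t$ has the same distribution as an i.i.d.\ matrix plus a small Gaussian component, and study the evolution of $G_{t}^{z_i}:=(H^{z_i}(X_t)-\ii\eta_i(t))^{-1}$ along characteristic curves $t\mapsto\eta_i(t)$ chosen precisely so that $M_{12}^{A_1}$, evaluated at the moving parameters, is $t$-independent. An It\^o computation then produces an SDE for $\<(G_{t}^{z_1}A_1G_{t}^{z_2}-M_{12}^{A_1})A_2\>$ whose drift is again controlled by $\mathcal{B}_{12}^{-1}$, and whose martingale part has quadratic variation expressible through three-resolvent chains whose deterministic approximation $M_{121}^{A_1,A_2}$ obeys the bound \eqref{M_3_bound}. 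Since the characteristic flow decreases $\eta_i$ while $\gamma$ evolves only mildly, a Gr\"onwall-type argument propagates the initial global bound down to $\eta_i$ slightly above the local spacing of $H^{z_i}$, retaining the optimal $\gamma$-denominator in both \eqref{enlaw12} and \eqref{avlaw12}.

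\medskip

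\noindent\textbf{Zag-step and main obstacle.} After the zig-step the law holds for the Gaussian-perturbed matrix $X_t$ at some $t\sim 1$. To transfer it back to the original $X=X_0$, I will run a Green's function comparison argument interpolating between $X_0$ and a moment-matched fully Gaussian matrix, expanding $\<(G_1A_1G_2-M_{12}^{A_1})A_2\>$ in cumulants of $X$ up to order four and controlling the resulting longer resolvent chains through the $M$-term bounds of Lemma \ref{lemma_M}. I expect the \emph{main obstacle} to be the precise bookkeeping of the $\gamma$-denominator along the zig-flow near the edge: there $\rho_i$ varies nontrivially with $\eta_i$ and the characteristic vector field couples $\rho_i$ and $\eta_i$, so the stability bound $1/\gamma$ for $\mathcal{B}_{12}^{-1}$ and the three-resolvent bound $1/(\eta_*\gamma)$ for $M_{121}^{A_1,A_2}$ must be combined with surgical precision to reproduce the stated rate $1/(\sqrt{n\ell}\gamma)$. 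The alternative bound $1/(n\eta_*^2)$ in \eqref{avlaw12} is obtained almost for free from the Ward identity $G G^* = \Im G / \eta$ together with the trivial bound $\|G\|\le 1/\eta_*$.
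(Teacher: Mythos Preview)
Your high-level plan is the same as the paper's---global law, zig-step along the characteristics, zag-step via GFT---but as written it has a real gap in the zag-step. The paper's GFT (Lemma~\ref{lemma_RS_gft}) does \emph{not} transfer the optimal bound from $X_T$ to $X_0$ in one pass. The cumulant expansion of $\partial_t\E|R_t^{xy}|^{2m}$ produces third-order terms like $(G_1)_{\xx\ea}(G_1)_{aa}(G_1)_{BB}(G_1AG_2)_{\eB\yy}$; to estimate $\sum_B(G_1AG_2)_{\eB\yy}$ you must feed back the \emph{a priori} isotropic bound for $G_1AG_2$, and the only a priori bound available for general $X_t$ is the non-optimal one $\mathcal{E}_b=(\rho^*)^{(1-b)/2}/(\sqrt{n}\eta_*^{(3-b)/2}\gamma^{b/2})$ from the previous step. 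The resulting forcing term in the Gronwall inequality is then $\mathcal{E}_{b'}$ with $b'=b+\theta$ for some tiny $\theta\le\epsilon/10$, not $\mathcal{E}_1$. Hence the paper wraps the entire zig-zag inside a \emph{$b$-iteration} (Proposition~\ref{prop_iteration}), starting from Proposition~\ref{prop_initial} (the $b=0$ case) and running $O(1/\theta)$ rounds to reach $b=1$. Your proposal lacks this bootstrapping layer, so your zag-step would simply return the non-optimal bound.

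There is a second, related omission: the Gronwall prefactor in the zag-step is $C_n=1+1/(\sqrt{n}\eta_*^{1+\epsilon/2})$, which blows up for $\eta_*\ll n^{-1/2}$. The paper therefore also iterates the zig-zag over a decreasing sequence of scales $\eta_*^{(k)}=n^{-a_k}$ (see \eqref{eta_k} and Figure~\ref{fig:zigzag}), using a short Gaussian component $T^{(k)}\sim\eta_*^{(k-1)}/\rho_*$ at each step so that $T^{(k)}C_n\lesssim 1$. A single zig with $T\sim 1$ followed by a single zag, as you describe, cannot reach $\eta_*\sim n^{-1+\epsilon}$ near the edge. Two minor corrections: $M_{12,t}^{A_1}$ is \emph{not} constant along the characteristics---it evolves by \eqref{eq:evolM12}---and the $1/(n\eta_*^2)$ bound in \eqref{avlaw12} is the content of \eqref{initial_ave} (proved in \cite{complex_CLT}), not a direct consequence of the Ward identity.
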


\subsection{Improved local law at the edge}
\label{sec:imprmultiG}

We state the following improved local law for $\Im \ga A \Im \gb$ than (\ref{avlaw12}), with  $\Im G=\frac{1}{2\ii}(G-G^*)$, up to the edge regime  $|z_i| \leq 1+n^{-1/2+\tau}$.

\begin{theorem}\label{thm:2G_edge}
Fix small constants $\epsilon, \tau>0$ with $0<\tau<\epsilon/10$. Then, for any deterministic matrices $\|A_i\| \lesssim 1$, the following local law  
\begin{equation}\label{avlaw12_im}
	\Big|\langle \big(\Im G^{z_1}(\ii \eta_1) A_1 \Im G^{z_2}(\ii \eta_2)-\wh M_{12}^{A_1}\big)A_2\rangle\Big|\prec
	\frac{1}{\sqrt{n\ell}}\frac{\rho_{1}\rho_{2}}{\widehat{\gamma}},
\end{equation}
holds uniformly for any $z_i \in \C$ and $\eta_i \in \R~(i=1,2)$ such that $|z_i|\le 1+n^{-1/2+\tau}$ and $\ell =\min_{i=1,2}\rho_i|\eta_i| \geq n^{-1+\epsilon}$,
with $\wh \gamma$ given by (\ref{parameter2}), and the deterministic matrix $\wh{M}_{12}^A=\wh{M}_{12}^A(\ii\eta_1,z_1,\ii \eta_2,z_2)$ defined in (\ref{Im_M_def}).
\end{theorem}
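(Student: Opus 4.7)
The plan is to adapt the three-step zigzag strategy already outlined in the introduction to the $\Im G \cdot A \cdot \Im G'$ observable, treating $\Im G^z$ as an atomic object throughout (never expanding $\Im G = (G-G^*)/(2\ii)$) so that the edge smallness $\langle \Im G^z\rangle \sim \rho^z$ is preserved all the way down to the optimal scale. The global bound at $\eta_i \sim 1$ is the easy input: it follows by combining the single-resolvent local law (Theorem~\ref{local_thm}) with the deterministic bound $|\langle \wh M_{12}^{A_1} A_2\rangle| \lesssim \rho_1\rho_2/\wh\gamma$ from Lemma~\ref{lemma_M} (\ref{Im_M_bound}), plus a standard resolvent identity $\Im G = \eta G^*G$.

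For the zig-step, I would let $X_t$ evolve under the Ornstein--Uhlenbeck flow $\dd X_t = -\tfrac{1}{2}X_t\dd t + \dd B_t/\sqrt{n}$, and choose $(z_i(t),\eta_i(t))$ to flow along the characteristics of the Matrix Dyson Equation~\eqref{MDE} so that the deterministic target $\wh M_{12}^{A_1}$ is preserved. Applying Ito's formula to $\Phi_t := \langle(\Im G_1^t A_1 \Im G_2^t - \wh M_{12}^{A_1})A_2\rangle$, the drift is absorbed by the choice of characteristics, while the Ito-correction terms reduce to three-resolvent traces of the type $\langle \Im G_i A \Im G_j A' \Im G_i\rangle$. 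Bounding these through the sharp $M$-term estimate $\|M_{121}\| \lesssim 1/(\eta_*\gamma)$ from Lemma~\ref{lemma_M} together with a bootstrap assumption on $\Phi_t$ itself, I would close a Gronwall argument propagating the target bound from $\eta\sim 1$ down to $\rho_i|\eta_i| \gtrsim n^{-1+\epsilon}$. The zag-step then removes the added Gaussian component by a Green's function comparison based on fourth-order cumulant expansion, again keeping $\Im G_1$ and $\Im G_2$ as atomic objects in each cumulant term in order not to destroy the $\rho_1\rho_2$ prefactor.

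The hard part will be extracting the $\rho_1\rho_2$ factor through the Ito step: if one naively estimated the three-resolvent chains in the quadratic variation using the $G$-version of Theorem~\ref{thm:2G}, one would recover only the $1/\wh\gamma$ decay, which is short by exactly $\rho_1\rho_2$ and reproduces the non-optimal $1/|z_1-z_2|$ bound of the earlier literature. Closing the bootstrap therefore relies on two technical points: (i) the sharp deterministic bound $\|M_{121}\|\lesssim 1/(\eta_*\gamma)$ with the \emph{correct} denominator $\gamma$ (not $\wh\gamma$), which Lemma~\ref{lemma_M} must establish uniformly across the bulk--edge crossover by unravelling the delicate cancellations in the recursive formula defining $M_{121}$; and (ii) a stopping-time bootstrap that carries along the $\rho$-weighted three-resolvent norms jointly with $\Phi_t$, so that every three-resolvent object appearing in the Ito correction can be re-injected as an $\Im G$-chain rather than a $G$-chain. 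Once these two ingredients are in place, the rest of the zigzag runs along the same lines as the proof of Theorem~\ref{thm:2G}.
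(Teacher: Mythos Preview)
Your overall zigzag architecture is right, but two concrete points are off and would prevent the argument from closing as written.

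First, the terms generated by It\^{o}'s formula (and by the cumulant expansion in the zag step) are \emph{not} purely $\Im G$-chains. Differentiating $\Im G_1 A_1 \Im G_2$ in the direction $h_{aB}$ produces entries like $(G_1 A_1 \Im G_2 A_2 G_1)_{Ba}$ and two-resolvent traces such as $\langle (G_1^* A_1 G_2 - M_{\bar 1 2}^{A_1})E_i\rangle$: plain $G$'s and $G^*$'s reappear. So you cannot keep $\Im G$ fully atomic, and the zig/zag for $\Im G$ must consume the already-proven $G$-version (Theorem~\ref{thm:2G} and the isotropic three-$G$ bound of Lemma~\ref{lemma:3G}) as an \emph{input}, not run in parallel with it. In the paper these non-$\Im G$ drift terms are exactly the $(LT)$, $(EI)$, $(EII)$ blocks, controlled by~\eqref{avlaw12}.

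Second, your proposed cure---bootstrapping $\rho$-weighted three-resolvent norms alongside $\Phi_t$---is not how the $\rho_1\rho_2$ gain actually emerges, and it is unclear such a joint bootstrap would close (the three-resolvent $M$-bound $\|M_{121}\|\lesssim 1/(\eta_*\gamma)$ carries no extra $\rho$-factor). What works instead is to \emph{reduce} every longer chain back to the two-resolvent quantity $\langle \Im G_1 A \Im G_2 A'\rangle$ and re-inject the bootstrap bound on $\Phi_t$ itself. Concretely: the quadratic variation is a four-$\Im G$ trace, bounded via the reduction inequality $\langle \Im G_1 A_1 \Im G_2 A_2 \Im G_1 A_2^* \Im G_2 A_1^*\rangle \le n\langle \Im G_1 A_1 \Im G_2 A_2\rangle^2$; the three-resolvent drift terms in $(EIII)$ are of the form $\langle \Im G_1 A_1 \Im G_2 A_2 G_1\rangle$ and are bounded by $\eta_1^{-1}\langle \Im G_1 A_1 \Im G_2 A_1^*\rangle^{1/2}\langle \Im G_1 A_2 \Im G_2 A_2^*\rangle^{1/2}$ via Cauchy--Schwarz plus Ward. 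Both feed back $\rho_1\rho_2/\wh\gamma$ from the stopping-time assumption on $\Phi_t$ alone. The Gronwall then closes because the chiral symmetry $\langle \Im G_1 E_- \Im G_2 E_-\rangle = \langle \Im G_1 \Im G_2\rangle$ and $\langle \wh M_{12}^{E_\pm} E_\mp\rangle=0$ collapse the linear system to a \emph{scalar} equation (unlike the $4\times 4$ system for $G$), and the single propagator is controlled by $\exp(\int_s^t |\langle M_{12,r}^I\rangle|\,\dd r)\lesssim \wh\gamma_s^2/\wh\gamma_t^2$. Finally, in the zag the dangerous cumulants start at \emph{third} order, not fourth; the same Cauchy--Schwarz/Ward/reduction trick recovers the $\rho_1\rho_2$ factor there.
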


By a standard argument, we obtain the following optimal bound for the overlaps between singular vectors of $X-z_i$ for different $z_1,z_2 \in \C$. The proof is similar to \cite[Theorem 3.1]{mesoCLT} for $|z|\leq 1-\tau$ and \cite[Corollary 3.4]{gumbel} for $1 \leq |z_i|\leq 1+ n^{-1/2+\tau}$. The only difference is to replace the non-optimal local law for $\Im G^{z_1} \Im G^{z_2} $ (\eg \cite[Theorem 3.3]{gumbel}) with the optimal version stated in Theorem \ref{thm:2G_edge}.

\begin{corollary}\label{coro_edge}
	Fix a small constant $\tau>0$. Then there exists a small $0<\omega_c<\tau/10$ such that, for any $z_i \in \C~(i=1,2)$ with 
	 $|z_i|\le 1+n^{-1/2+\tau}$, the following holds with very high probability,
	\begin{equation}\label{overlap}
		|\<\uu^{z_1}_i,\uu_j^{z_2}\>|^2+|\<\vv^{z_1}_i,\vv_j^{z_2}\>|^2 \lesssim  \frac{n^{-1+2\tau}}{|z_1-z_2|^2+n^{-1}}, \qquad  |i|, |j| \leq n^{\omega_c}.
	\end{equation}
\end{corollary}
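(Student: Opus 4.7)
The plan is to follow the spectral-decomposition strategy already used in \cite{mesoCLT} and \cite{gumbel}, now fed by the sharper two-resolvent local law of Theorem~\ref{thm:2G_edge}. The starting point is the identity
\begin{equation*}
\Im G^z(\ii\eta)= \eta\sum_{k\ne 0}\frac{\mathbf{w}_k^z(\mathbf{w}_k^z)^*}{(\lambda_k^z)^2+\eta^2},\qquad \mathbf{w}_k^z=\big(\mathbf{u}_{|k|}^z,\,\mathrm{sgn}(k)\mathbf{v}_{|k|}^z\big)^\top,
\end{equation*}
which, after combining chiral pairs, gives
\begin{equation*}
\big\langle \Im G^{z_1}(\ii\eta_1)E_1\Im G^{z_2}(\ii\eta_2)E_1\big\rangle=\frac{2\eta_1\eta_2}{n}\sum_{k,\ell\ge 1}\frac{|\langle\mathbf{u}_k^{z_1},\mathbf{u}_\ell^{z_2}\rangle|^2}{\big((\lambda_k^{z_1})^2+\eta_1^2\big)\big((\lambda_\ell^{z_2})^2+\eta_2^2\big)},
\end{equation*}
and similarly with $E_1$ replaced by $E_2$ on the right-hand side the numerator becomes $|\langle\mathbf{v}_k^{z_1},\mathbf{v}_\ell^{z_2}\rangle|^2$. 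Crucially, all summands are non-negative, so one may drop every term except the $(i,j)$ one we wish to bound.

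Next I would pick a common scale $\eta_1=\eta_2=\eta$ with $\eta=n^{-3/4+c\tau}$ for a suitable small $c$. The density estimates \eqref{rho_E_bulk}--\eqref{rho_E} give $\rho(\ii\eta)\gtrsim \eta^{1/3}$ uniformly in the regime $|z|\le 1+n^{-1/2+\tau}$, so this choice satisfies $\rho\eta\ge n^{-1+\epsilon}$ with $\epsilon=O(\tau)$, meeting the hypothesis of Theorem~\ref{thm:2G_edge}. The quantile formula \eqref{quantile} combined with $\rho^z(E)\sim |E|^{1/3}$ near the edge (and the $\Delta\lesssim n^{-3/4+3\tau/2}$ gap estimate when $|z|>1$) yields $\gamma_{|i|}^z\lesssim n^{-3/4+\omega_c}$, and rigidity \eqref{rigidity} then gives $\lambda_i^{z_1},\lambda_j^{z_2}\lesssim \eta$ with very high probability for all $|i|,|j|\le n^{\omega_c}$, provided $\omega_c<\tau/10$.

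With this choice, the two-resolvent local law \eqref{avlaw12_im} (applied with $A_1=A_2=E_1$ and then $A_1=A_2=E_2$) combined with the bound \eqref{Im_M_bound} on $\wh M_{12}^{A_1}$ yields
\begin{equation*}
\big\langle \Im G^{z_1}(\ii\eta) E_a \Im G^{z_2}(\ii\eta) E_a\big\rangle \prec \frac{\rho^2}{\widehat\gamma},\qquad a=1,2,
\end{equation*}
where $\widehat\gamma\gtrsim |z_1-z_2|^2+\rho\eta\gtrsim |z_1-z_2|^2+n^{-1}$. Keeping only the $(i,j)$-term on the spectral side and using $(\lambda_i^{z_1})^2+\eta^2\lesssim \eta^2$ and the analogous bound for index $j$, one obtains
\begin{equation*}
|\langle\mathbf{u}_i^{z_1},\mathbf{u}_j^{z_2}\rangle|^2+|\langle\mathbf{v}_i^{z_1},\mathbf{v}_j^{z_2}\rangle|^2 \lesssim n^\xi\,\frac{n\eta^2\,\rho^2}{\widehat\gamma}=n^\xi\,\frac{n(\rho\eta)^2}{\widehat\gamma}\lesssim \frac{n^{-1+2\tau}}{|z_1-z_2|^2+n^{-1}},
\end{equation*}
after absorbing $\xi$ and $\epsilon=O(\tau)$ into the final exponent.

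The main technical obstacle is the unified choice of $\eta$ that simultaneously (i) sits above the local eigenvalue-fluctuation scale so that rigidity forces $\lambda_i^z\lesssim \eta$ for the prescribed range $|i|\le n^{\omega_c}$ in all three density regimes $|z|<1$, $|z|=1$, $|z|>1$, (ii) satisfies $\rho\eta\ge n^{-1+\epsilon}$ so that Theorem~\ref{thm:2G_edge} is applicable, and (iii) makes $(\rho\eta)^2$ small enough that the factor $n(\rho\eta)^2$ yields the desired $n^{-1+2\tau}$ after loss through the stochastic-domination exponent. The explicit formulas \eqref{rho_E_bulk}--\eqref{rho}, together with the gap bound $\Delta\lesssim n^{-3/4+3\tau/2}$ near $|z|=1+n^{-1/2+\tau}$, reconcile these requirements with a single scale $\eta\sim n^{-3/4+c\tau}$.
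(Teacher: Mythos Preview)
Your overall strategy---spectral decomposition, positivity of the summands, rigidity to control $\lambda_i^z$, then Theorem~\ref{thm:2G_edge} plus \eqref{Im_M_bound}---matches the paper's approach and is sound. The gap is in your choice of scale.

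You fix a single $\eta=n^{-3/4+c\tau}$ and claim it works uniformly in $|z|\le 1+n^{-1/2+\tau}$. It does not. The quantity that must be small is $(n\rho_1\eta)(n\rho_2\eta)$, and you need it $\lesssim n^{2\tau}$. At the edge $\rho\sim\eta^{1/3}$ and indeed $n\rho\eta\sim n^{O(\tau)}$, but if $z_i$ is in the bulk then $\rho_i\sim 1$ and $n\rho_i\eta\sim n^{1/4}$, so your final display would give a bound of order $n^{-1/2}/\widehat\gamma$ rather than $n^{-1+2\tau}/\widehat\gamma$. Your last paragraph asserts the single scale reconciles all three regimes, but the bulk regime $|z|<1$ breaks requirement (iii) there: $(\rho\eta)^2\sim n^{-3/2}$ is too large.

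The fix is the one the paper uses in the proof of Theorem~\ref{thm:maint}: take $z$-dependent scales $\eta_i=n^{\xi}\eta_{\mathrm f}(z_i)$, where $\eta_{\mathrm f}(z)$ is the local fluctuation scale (so $\eta_{\mathrm f}\sim n^{-1}$ in the bulk and $\sim n^{-3/4}$ at the edge). Then $n\rho_i\eta_i\sim n^{\xi}$ automatically in every regime, rigidity still gives $|\lambda_i^{z}|\lesssim\eta_i$ for $|i|\le n^{\omega_c}$, and your computation goes through verbatim with $(n\rho_1\eta_1)(n\rho_2\eta_2)\lesssim n^{2\xi}$. Nothing else in your argument needs to change.
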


We conclude this section by showing that the bound \eqref{avlaw12_im} readily implies those in Theorem~\ref{thm:maint} and Corollary~\ref{thm:main}.  Actually, Corollary~\ref{coro_edge} is obtained as a by-product in the proof of Theorem~\ref{thm:maint} (see (\ref{eq:desboigrrll}) below).

\begin{proof}[Proof of Theorem~\ref{thm:maint}]

Let $\eta_{\mathrm{f}}=\eta_{\mathrm{f}}(z)$ denote the local fluctuation scale of the eigenvalues of $H^z$ close to zero, 
which, for  $|z|\le 1$ 
is implicitly defined by
\[
\int_{-\eta_{\mathrm{f}}}^{\eta_{\mathrm{f}}} \rho^z(x)\,\dd x=\frac{1}{n}.
\]
  From (\ref{rho}), note that $\eta_{\mathrm{f}}\sim n^{-1}$ when $|z|<1$ and $\eta_{\mathrm{f}}\sim n^{-3/4}$ when $|z|=1$.  For $|z|>1$ the density $\rho^z$ develops a gap $\Delta^z$ around zero, and one can see that in this case $\eta_{\mathrm{f}}\sim |\Delta^z|^{1/9}/n^{2/3}$ and $|\Delta^z|\sim (|z|^2-1)^{3/2}$ (see e.g. \cite[Appendix A]{SpecRadius} and references therein).  Fix a small $\xi>0$, and define $\eta_i:=n^\xi \eta_{\mathrm{f}}(z_i)$, for $z_i\in\C$. Then, by \eqref{avlaw12_im} and (\ref{Im_M_bound}), 
we obtain
\begin{equation}
\label{eq:bsup12}
\sup_{|z_1|,|z_2|\le 1+n^{-1/2+\tau}} \frac{\widehat{\gamma}}{\rho_1\rho_2}\big|\langle \Im G^{z_1}(\ii \eta_1) \Im G^{z_2}(\ii \eta_2)\rangle\big| \lesssim 1,
\end{equation}
with very high probability. Here $\rho_i=\pi^{-1}|\Im m^{z_i}(\ii\eta_i)|$ and $\widehat{\gamma}=\widehat{\gamma}(\ii\eta_1,z_1,\ii\eta_2,z_2)$. We point out that to obtain the supremum bound in \eqref{eq:bsup12} we used \eqref{avlaw12_im} and a standard grid argument using the Lipschitz continuity of $z\mapsto \Im G^z$.

Next, by the rigidity estimate of the eigenvalues of $H^z$ in \eqref{rigidity}, we have
\begin{equation}
\label{eq:rigsup}
\max_{|j|\leq n^{\omega_c}}\frac{|\lambda^{z}_j|}{\eta_{\mathrm{f}}(z)}\lesssim 1,
\end{equation}
with very high probability, for some small constant $0<\omega_c<\xi/10$.  Note that for any $j\in [n]$, $\lambda^{z}_j$ is the $j$-th non-negative eigenvalue of $H^{z}$ in the non-deceasing order and $\lambda^{z}_{-j}=-\lambda^{z}_j$. 

We notice that by the spectral decomposition of $\Im G= (G-G^*)/(2\ii)$ and the chiral symmetry of $H^z$ in (\ref{def_H}), we have
\begin{equation}
\<\Im G^{z_1}(\ii \eta_1) \Im G^{z_2}(\ii \eta_2)\>=\frac{4}{n}\sum_{j,k=1}^n \frac{\eta_1\eta_2 \big(|\<\uu^{z_1}_j,\uu_k^{z_2}\>|^2+|\<\vv^{z_1}_j,\vv_k^{z_2}\>|^2\big)}{\big((\lambda^{z_1}_j)^2+\eta_1^2\big)((\lambda^{z_2}_k)^2+\eta_2^2\big)},
\end{equation}
 where $(\lambda^{z_i}_j)_{j=1}^n$ are the non-decreasing singular values of  $X-z_{i}$ with $i=1,2$, and  $\{\uu_j^{z_i}\}_{j=1}^n$, $\{\vv_j^{z_i}\}_{j=1}^n$ are the corresponding left and right singular vectors normalized such that $\|\uu_j^{z_i}\|^2=\|\vv_j^{z_i}\|^2=1/2$. 
Then, on the very high probability event on which \eqref{eq:bsup12}--\eqref{eq:rigsup} hold, we obtain (recall $\eta_i:=n^\xi \eta_{\mathrm{f}}(z_i)$)
\begin{equation}
\label{eq:desboigrrll}
\begin{split}
&\sup_{|z_1|,|z_2|\le 1+n^{-1/2+\tau}}(n|z_1-z_2|^2+1)\big[|\<\uu^{z_1}_1,\uu_1^{z_2}\>|^2+|\<\vv^{z_1}_1,\vv_1^{z_2}\>|^2\big] \\
&\qquad \le \sup_{|z_1|,|z_2|\le 1+n^{-1/2+\tau}} (n|z_1-z_2|^2+1) (n\eta_1\eta_2) \<\Im G^{z_1}(\ii \eta_1) \Im G^{z_2}(\ii \eta_2)\> \\
&\qquad \lesssim \sup_{|z_1|,|z_2|\le 1+n^{-1/2+\tau}} \frac{|z_1-z_2|^2+ n^{-1}\nc}{\widehat{\gamma}} (n\eta_1\rho_1)(n\eta_2\rho_2) \lesssim  n^{4\xi},
\end{split}
\end{equation}
with very high probability, where in the last inequality we used \eqref{parameter2} to show that $(|z_1-z_2|^2+ n^{-1})/\widehat{\gamma}\lesssim 1$ and we used that $n\eta_i\rho_i\lesssim n^{2\xi}$ by the definition of $\eta_{\mathrm{f}}$ above \eqref{eq:bsup12} (note that $\eta_{\mathrm{f}}$ has the property that $n\eta_{\mathrm{f}}\rho^z(\ii\eta_{\mathrm{f}})\sim 1$).

 Recall the basic relation \eqref{eq:evevrel} that $z\in \mathrm{Spec}(X)$ if and only if the least singular value $\lambda^{z}_1$ is zero. Moreover, if $z\in \mathrm{Spec}(X)$, then the left and right eigenvector of $X$ associated to the eigenvalue $z$ are exactly the left and right singluar vector of $X-z$. This implies that
\begin{align}
	&\sup_{i,j\in [n]}\big(n|\sigma_i-\sigma_j|^2+1\big)\left[\frac{\big|\langle \mathbf{r}_i, \mathbf{r}_j\rangle\big|^2}{\lVert \mathbf{r}_i\rVert^2\lVert \mathbf{r}_j\rVert^2}+\frac{\big|\langle \mathbf{l}_i, \mathbf{l}_j\rangle\big|^2}{\lVert \mathbf{l}_i\rVert^2\lVert \mathbf{l}_j\rVert^2}\right]\nonumber\\
	 =& 4 \sup_{z_1,z_2\in \mathrm{Spec}(X)}(n|z_1-z_2|^2+1)\big[|\<\uu^{z_1}_1,\uu_1^{z_2}\>|^2+|\<\vv^{z_1}_1,\vv_1^{z_2}\>|^2\big],
\end{align}
where $\uu^{z_i}_1$ and $\vv^{z_i}_1~(i=1,2)$ are the left and right singular vectors associated to the least singular value of $X-z_i$, normalized such that $\|\uu_1^{z_i}\|^2=\|\vv_1^{z_i}\|^2=1/2$. Note that with very high probability the spectrum of $X$ lies inside the disk $|z|\le 1+n^{-1/2+\tau}$~(see \cite[Theorem 2.1]{AEK19}). Hence, with very high probability, 
\begin{align}\label{inequality}
	&\sup_{i,j\in [n]}\big(n|\sigma_i-\sigma_j|^2+1\big)\left[\frac{\big|\langle \mathbf{r}_i, \mathbf{r}_j\rangle\big|^2}{\lVert \mathbf{r}_i\rVert^2\lVert \mathbf{r}_j\rVert^2}+\frac{\big|\langle \mathbf{l}_i, \mathbf{l}_j\rangle\big|^2}{\lVert \mathbf{l}_i\rVert^2\lVert \mathbf{l}_j\rVert^2}\right]\nonumber\\
	\lesssim & \sup_{|z_1|,|z_2|\le 1+n^{-1/2+\tau}}(n|z_1-z_2|^2+1)\big[|\<\uu^{z_1}_1,\uu_1^{z_2}\>|^2+|\<\vv^{z_1}_1,\vv_1^{z_2}\>|^2\big].
\end{align}
This, together with (\ref{eq:desboigrrll}), immediately implies \eqref{eq:ocorn}--\eqref{eq:oij} and concludes the proof.
 	\end{proof}

\begin{proof}[Proof of Corollary~\ref{thm:main}]
 Fix any small $\xi>0$. From (\ref{eq:oij}), we know that for any $i,j \in [N]$, with very high probability,
\begin{align}\label{Oinequality}
	|\mathcal{O}_{ij}| \leq & \frac{n^{\xi}}{n|\sigma_i-\sigma_j|^2+1} \sqrt{\mathcal{O}_{ii}\mathcal{O}_{jj}} \nonumber\\
	\lesssim & \frac{n^{\xi}}{n|z_1-z_2|^2+1} \sqrt{\mathcal{O}_{ii}\mathcal{O}_{jj}},
\end{align}
where the last step follows if we assume that the corresponding eigenvalues $\sigma_i \in \mathcal{D}^\xi_{z_1}$ and $\sigma_j \in \mathcal{D}^\xi_{z_2}$, with $|z_1-z_2|\geq n^{-1/2+\epsilon}$ and $\xi<\epsilon/10$. Moreover, from 
 the local law mentioned in Remark~\ref{rem:locallaw},  
for any $z\in \C$, with very high probability,
\begin{align}\label{ninequality}
	\#\{\sigma_i \in \mathrm{Spec}(X):\sigma_i \in \mathcal{D}^\xi_{z}\} \leq n^{3\xi}.
\end{align}
We next recall various bounds on the expectation of the diagonal overlaps from \cite[Theorems 2.7, 2.9]{HC23}. For example, in  the complex case, using \cite[Eq (2.14)]{HC23}, there exists an event $\Xi$ with very high probability, i.e. $\P(\Xi)\ge 1-n^{-D}$, such that for any $z\in \C$,
\begin{align}\label{Einequality}
	 \E \Big(\sum_{\sigma_i\in \mathcal{D}^\xi_{z}} |\mathcal{O}_{ii}|\cdot {\bf 1}_\Xi \Big) \lesssim n^{1+3\xi}.
\end{align}
Combining the very high probability bound (\ref{Oinequality})-(\ref{ninequality}) with the expectation bound (\ref{Einequality}) and using a simple Cauchy-Schwarz inequality, we obtain the desired result in (\ref{OijR}) for the complex case. The remaining statements of Corollary~\ref{thm:main} follow similarly by combining (\ref{Oinequality})-(\ref{ninequality}) with the corresponding expectation bounds from Eq (2.15), Eq (2.17), and Eq (2.18) of \cite{HC23}, respectively. This finishes the proof. \nc
\end{proof}

\section{Zig flow}\label{sec:zig}

In this section we show that if certain local laws (see \cite[Theorem 5.2]{complex_CLT}) hold for $\eta\sim 1$ (for simplicity in this introductory paragraph we focus on $|z|<1$), then they can be propagated down to $\eta\gg n^{-1}$ using the Ornstein--Uhlenbeck and characteristic flows.  The main results of this section (Propositions~\ref{prop:aveflow}--\ref{prop:isoflow}) are the fundamental inputs to prove Theorem~\ref{thm:2G}. In fact, they show that if the local laws from Theorem~\ref{thm:2G} hold on a global scale ($\eta\sim 1$), then they also hold down to the optimal $\eta$-scale ($\eta\gg 1/n$), at the price of adding a Gaussian component. This Gaussian component will then be removed in Section~\ref{sec:zag} using a Green's Function Comparison Theorem (GFT).  We point out that some parts of this section, like the definition and properties of the characteristics flow and the analysis of the flow in the averaged case, are very close to \cite[Section 5]{gumbel}; however, we decided to present the details here for the convenience of the reader. Consider the Ornstein--Uhlenbeck flow
\begin{equation}
	\label{eq:OUmat}
	\dd X_t=-\frac{1}{2}X_t\dd t+\frac{\dd B_t}{\sqrt{N}}, \qquad\quad X_0=X,
\end{equation}
where $X$ is an i.i.d. matrix satisfying Assumption~\ref{ass:mainass}. Here $B_t$ is an $n\times n$ matrix with entries being real or complex standard i.i.d. Brownian motions, depending on the symmetry class of $X$.

We now define the characteristics flow associated with \eqref{eq:OUmat}. Define the Hermitisation of $X_t$ by
\begin{equation}
	\label{eq:hermflow}
	W_t:=\left(\begin{matrix}
		0 & X_t \\
		X_t^* & 0
	\end{matrix}\right),
\end{equation}
and consider the spectral parameters $\Lambda\in \C^{2n\times 2n}$ defined by
\begin{equation}
	\label{eq:defchar}
	\Lambda:=\left(\begin{matrix}
		\ii\eta & z \\
		\overline{z} & \ii\eta
	\end{matrix}\right), \qquad\quad \eta\in \R, \qquad z\in\C.
\end{equation}
Throughout this section we consider spectral parameters being $2n\times 2n$ block--constant matrices, i.e. we consider $\Lambda$'s consisting of four $n\times n$ blocks each one of them being a constant multiple of the $n$--dimensional identity matrix. The main object of interest within this section is the resolvent $(W_t-\Lambda_t)^{-1}$, with $W_t$ from \eqref{eq:hermflow} and $\Lambda_t$ being the solution of the \emph{characteristic flow}:
\begin{equation}
	\label{eq:charflowmat}
	\partial_t\Lambda_t= -\mathcal{S}[M(\Lambda_t)]-\frac{\Lambda_t}{2},
\end{equation}
with $\mathcal{S}$ being the covariance operator from \eqref{cov} and $M(\Lambda_t):=M^{z_t}(\ii\eta_t)$. Note that if $\Lambda_0$ is constant in each of its four blocks so is $\Lambda_t$ for any $t\ge 0$, hence the equation \eqref{eq:charflowmat} reduces to two scalar equations:
\begin{equation}
	\label{eq:char}
	\partial_t\eta_t=-\Im m^{z_t}(\ii\eta_t)-\frac{\eta_t}{2}, \qquad\quad \partial_t z_t=-\frac{z_t}{2}.
\end{equation}
We point out that there is a one to one correspondence between $(\eta_t,z_t)$ and $\Lambda_t$. For this reason throughout this section we may refer to one or the other interchangeably. The $\eta_t$--characteristics are decreasing in time, for this reason, throughout this section, we only consider the characteristics up to the first time they touch the real axis
\begin{equation}
	T^*(\eta_0,z_0):=\sup\big\{ t\ge 0: \mathrm{sgn}(\eta_t)=\mathrm{sgn}(\eta_0)\big\}.
\end{equation}
Even if not stated explicitly, in the reminder of this section we always consider initial conditions such that $|\eta_0|+|z_0|\lesssim 1$, which is equivalent to $\lVert \Lambda_0\rVert\lesssim 1$. In the following (trivially checkable) lemma we summarize several properties of the characteristics \eqref{eq:char} (cf. \cite[Lemma 5.1]{gumbel}):
\begin{lemma}
	\label{lem:charprop}
	Fix $\Lambda_0$ with $\lVert \Lambda_0\rVert\lesssim 1$, and let $\Lambda_t$ be the solution of \eqref{eq:charflowmat} with initial condition $\Lambda_0$. Define $M_t:=M^{z_t}(\ii\eta_t)$ and $\rho_t:=\pi^{-1}|\Im m_t|$, then for any $0\le s\le t\le T^*(\eta_0,z_0)$ we have
	\begin{equation}
		\label{eq:easyrelchar}
		z_t= e^{-(t-s)/2}z_s, \qquad M_t=e^{(t-s)/2}M_s, \qquad \frac{|\eta_t|}{\rho_t}=e^{s-t}\frac{|\eta_s|}{\rho_s}-\pi(1-e^{s-t}).
	\end{equation}
	Additionally, the maps $t\mapsto |\eta_t|$, $t\mapsto |\eta_t|\rho_t$, and $t\mapsto|\eta_t|/\rho_t$ are strictly decreasing, and for any $\alpha>1$ there is a constant $C_\alpha$ such that
	\begin{equation}
		\label{eq:explint}
		\int_s^t \frac{1}{|\eta_r|^\alpha}\,\dd t\le \frac{C_\alpha}{|\eta_t|^{\alpha-1}\rho_t}, \qquad\quad \int_s^t \frac{\rho_r}{|\eta_r|}\,\dd r=\frac{1}{\pi}\left[\log\left(\frac{\eta_s}{\eta_t}\right)-\frac{t-s}{2}\right].
	\end{equation}
\end{lemma}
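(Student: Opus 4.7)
My plan is to verify each of the identities in \eqref{eq:easyrelchar}, the monotonicity statements, and the integral estimates in \eqref{eq:explint} by direct ODE computations, using the scalar form \eqref{eq:char} of the characteristic flow together with the MDE \eqref{MDE}.

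The first identity, $z_t = e^{-(t-s)/2} z_s$, is the immediate solution of the scalar linear ODE $\partial_t z_t = -z_t/2$. For the key scaling $M_t = e^{(t-s)/2} M_s$, I would differentiate the MDE $-M_t^{-1} = \Lambda_t + \mathcal{S}[M_t]$ in $t$, substitute $\partial_t \Lambda_t = -\mathcal{S}[M_t] - \Lambda_t/2$ from \eqref{eq:charflowmat}, and then eliminate $\Lambda_t$ via the MDE itself. A short calculation reduces the resulting equation to $(1 - M_t \mathcal{S}[\cdot] M_t)[\partial_t M_t] = (1 - M_t \mathcal{S}[\cdot] M_t)[M_t/2]$, whose unique solution (by invertibility of the stability operator $1 - M\mathcal{S}M$ in our regime) is $\partial_t M_t = M_t/2$. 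In particular this yields $m_t = e^{(t-s)/2} m_s$ and hence $\rho_t = e^{(t-s)/2} \rho_s$. Substituting $\Im m_t = \pi\rho_t$ into \eqref{eq:char} and combining with $\partial_t \rho_t = \rho_t/2$ gives the linear ODE $\partial_t(\eta_t/\rho_t) = -\pi - \eta_t/\rho_t$, whose explicit solution is precisely the third formula in \eqref{eq:easyrelchar}.

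The monotonicity statements then follow at once from the ODEs just derived: $\partial_t \eta_t = -\pi\rho_t - \eta_t/2 < 0$, $\partial_t(\eta_t \rho_t) = -\pi\rho_t^2 < 0$ (by the product rule), and $\partial_t(\eta_t/\rho_t) = -\pi - \eta_t/\rho_t < 0$. For the integral estimates in \eqref{eq:explint}, the explicit equality for $\int_s^t \rho_r/\eta_r \, dr$ is obtained by integrating $\partial_r \log \eta_r = -\pi\rho_r/\eta_r - 1/2$ over $[s,t]$. The bound $\int_s^t |\eta_r|^{-\alpha}\,dr \lesssim (|\eta_t|^{\alpha-1}\rho_t)^{-1}$ is the main technical step: the cleanest approach is the change of variables $u = \eta_r\rho_r$, for which $du = -\pi\rho_r^2\,dr$, rewriting the integral as $\int_{u_t}^{u_s} \rho_r^{\alpha-2}/(\pi u^\alpha)\,du$. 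For $\alpha \ge 2$ one uses $\rho_r \le \rho_t$ and elementary integration to conclude. For $1 < \alpha < 2$ the analogous naive estimate loses a factor $(\rho_t/\rho_s)^{2-\alpha}$, which must be controlled by the boundedness of $t-s$ on $[0, T^*]$; this in turn follows from the closed form $T^*(\eta_0,z_0) = \log(1 + \eta_0/(\pi\rho_0))$ and the initial condition $\|\Lambda_0\| \lesssim 1$. This final subcase for $\alpha$ close to $1$ is the main technical subtlety of the lemma.
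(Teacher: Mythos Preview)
Your proposal is correct. The paper itself does not give a proof of this lemma, labeling it ``trivially checkable'' and citing \cite[Lemma~5.1]{gumbel}, so your direct ODE verification is exactly the kind of argument expected. All the steps---the MDE differentiation yielding $\partial_t M_t = M_t/2$, the linear ODE for $\eta_t/\rho_t$, the monotonicity from the product and quotient rules, and the logarithmic integral---are standard and correctly executed.

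One small simplification for the integral bound $\int_s^t |\eta_r|^{-\alpha}\,\dd r$: rather than splitting into cases $\alpha\ge 2$ and $1<\alpha<2$, you can handle all $\alpha>1$ at once by observing that $-\partial_r\eta_r = \pi\rho_r + \eta_r/2 \ge \pi\rho_r \gtrsim \rho_t$ (the last inequality using $\rho_r = e^{(r-t)/2}\rho_t$ and $t-s\le T^*\lesssim 1$), so that $\dd r \lesssim -\rho_t^{-1}\,\dd\eta_r$ and
\[
\int_s^t \eta_r^{-\alpha}\,\dd r \lesssim \frac{1}{\rho_t}\int_{\eta_t}^{\eta_s}\eta^{-\alpha}\,\dd\eta \le \frac{1}{(\alpha-1)\rho_t\,\eta_t^{\alpha-1}}.
\]
This avoids the change of variables $u=\eta_r\rho_r$ and the separate treatment of the two $\alpha$-ranges, though your argument via that substitution is also perfectly valid. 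Your identification of $T^*(\eta_0,z_0)=\log(1+\eta_0/(\pi\rho_0))$ and the consequence $T^*\lesssim 1$ from $\|\Lambda_0\|\lesssim 1$ is correct and is indeed the point where the initial condition hypothesis enters.
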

Additionally, by standard ODE theory we have (cf. \cite[Lemma 5.2]{gumbel}):
\begin{lemma}
	\label{lem:ODEtheo}
	Fix any $T\le 1$ and pick $\Lambda$ of the form \eqref{eq:defchar} with $|\eta|>0$ and $\lVert\Lambda\rVert\le 1$. Then there exist $\Lambda_0$ with $\lVert \Lambda_0\rVert\lesssim 1$ and $T^*(\eta_0,z_0)> T$, and the solution $\Lambda_t$ of \eqref{eq:charflowmat} with initial condition $\Lambda_0$ is so that $\Lambda_T=\Lambda$. Furthermore, there exists an $N$--independent constant $c_*>0$ such that $\mathrm{dist}(\ii \eta_0,\mathrm{supp}(\rho^{z_0}))\ge c_*T$.
\end{lemma}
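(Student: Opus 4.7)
The plan is to construct $\Lambda_0$ by running the characteristic flow \eqref{eq:charflowmat} backward in time from the terminal data $\Lambda_T = \Lambda$, and then to verify the three claimed properties. The $z$--component decouples: integrating $\partial_t z_t = -z_t/2$ gives $z_t = e^{-t/2}z_0$, so the choice $z_0 := e^{T/2}z$ automatically enforces $z_T = z$, and $\lVert\Lambda\rVert\le 1$ together with $T \le 1$ yields $|z_0| \le e^{1/2}$.

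For the $\eta$--component I would introduce the time-reversed variable $\tilde\eta_s := \eta_{T-s}$, which solves the forward ODE
\begin{equation*}
\partial_s \tilde\eta_s = \Im m^{\tilde z_s}(\ii\tilde\eta_s) + \frac{\tilde\eta_s}{2}, \qquad \tilde\eta_0 = \eta, \qquad \tilde z_s := e^{s/2}z,
\end{equation*}
whose right-hand side is smooth and Lipschitz in $\tilde\eta$ away from zero and is strictly positive for $\tilde\eta > 0$. Consequently $\tilde\eta_s$ is strictly increasing, stays bounded away from the singularity at $\tilde\eta = 0$, and remains bounded above thanks to the uniform bound $|\Im m^z(\ii\eta)|\lesssim 1$. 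Picard--Lindel\"of then produces a unique classical solution on $[0,T]$ satisfying $\eta \le \tilde\eta_s \le \eta + Cs$, so $\eta_0 := \tilde\eta_T$ is bounded and $\lVert\Lambda_0\rVert \lesssim 1$. Uniqueness of the characteristic ODE immediately guarantees that the forward flow from $\Lambda_0$ retraces our construction and recovers $\Lambda_T = \Lambda$. Since Lemma~\ref{lem:charprop} makes $t \mapsto |\eta_t|$ strictly decreasing along the forward flow while $\eta_0$ and $\eta_T = \eta$ share the same (non-zero) sign, the solution cannot cross the real axis on $[0,T]$, so $T^*(\eta_0, z_0) > T$.

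The distance estimate is the genuine obstacle. My starting point is the identity from Lemma~\ref{lem:charprop} evaluated at $s=0$, $t=T$,
\begin{equation*}
\frac{|\eta_0|}{\rho_0} = e^T \frac{|\eta_T|}{\rho_T} + \pi(e^T - 1) \ge \pi T,
\end{equation*}
which already encodes the essential quantitative separation from the spectrum. To convert it into a lower bound on $\mathrm{dist}(\ii\eta_0, \mathrm{supp}\,\rho^{z_0})$, I would split according to the three regimes of $|z_0|$ and apply the explicit asymptotics \eqref{rho_E_bulk}--\eqref{rho_E}. In the bulk regime $|z_0| \le 1 - \delta$ one has $\rho_0 \gtrsim 1$, so the bound above forces $\eta_0 \gtrsim T$, and the distance is at least $\eta_0$. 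Near the spectral edge or cusp, one exploits the cubic-root structure of $\rho^{z_0}$ near zero to extract a self-consistent lower bound on $\eta_0$ by inverting the asymptotic relation $\rho_0 \sim \eta_0^{1/3} + \sqrt{|1-|z_0|^2|}$ through the identity above. In the gap regime $|z_0|>1$ the gap width $\Delta \sim (|z_0|^2-1)^{3/2}$ itself contributes to the distance from $\ii\eta_0$ to the support. The delicate point is to match all three regimes against one universal constant $c_*$; everything else reduces to routine ODE theory combined with the explicit characteristic relations recorded in Lemma~\ref{lem:charprop}.
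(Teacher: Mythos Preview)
Your approach—run the characteristic flow backward, use Picard--Lindel\"of for existence, and then exploit the identity $|\eta_0|/\rho_0 = e^T|\eta_T|/\rho_T + \pi(e^T-1)\ge \pi T$ from Lemma~\ref{lem:charprop} for the distance bound—is exactly the argument the paper has in mind; the paper does not give a proof but simply invokes standard ODE theory and cites \cite[Lemma~5.2]{gumbel}.

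One remark on the distance estimate you flag as delicate: if you actually carry out your case analysis near the edge, you obtain only $\mathrm{dist}(\ii\eta_0,\mathrm{supp}\,\rho^{z_0})\gtrsim T^{3/2}$ rather than $\gtrsim T$. For instance, take $|z|=e^{-T/2}$ so that $|z_0|=1$ exactly; then $\rho_0\sim\eta_0^{1/3}$ by \eqref{rho_E_bulk}, and your key inequality $\eta_0/\rho_0\ge\pi T$ becomes $\eta_0^{2/3}\gtrsim T$, i.e.\ $\eta_0\gtrsim T^{3/2}$. The same exponent arises in the gap regime via $(\Delta+\eta_0)^{2/3}\gtrsim T$. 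This appears to be a minor imprecision in the lemma's stated exponent rather than a defect in your argument: every application in the paper either takes $T\sim 1$ (where $T^{3/2}\sim T$) or uses only the weaker bound $|\eta_0|\gtrsim\rho_0 T$ directly (see the iterative zig-zag in the proof of Proposition~\ref{prop_iteration}), both of which your identity delivers immediately.
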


In the rest of this section we will consider times $t\le T^*:=\min_i T_i^*(\eta_{i,0},z_{i,0})$. Note that by the first two relations in \eqref{eq:easyrelchar}, for $s,t\le T^*$, it follows
\begin{equation}
	\label{eq:sizez1z2}
	|z_{1,s}-z_{2,s}|\sim |z_{1,t}-z_{2,t}|, \qquad\quad \rho_s\sim \rho_t.
\end{equation} 

We are now ready to state the two main results of this section. We first state the averaged case, whose statement is very similar to \cite[Proposition 5.4]{gumbel} (note the difference in the definition of $\gamma$ in \eqref{eq:defcontrolparam}), and then we state the isotropic case. For deterministic matrices $A_1,A_2\in\C^{2n\times 2n}$, we define the deterministic approximations for the product of two and three resolvents (using the notation $M_i:=M^{z_i}(\ii\eta_i)$)
\begin{align}
	\label{eq:defm12}
	M_{12}^{A_1}:&=\mathcal{B}_{12}^{-1}\big[M_1A_1M_2\big], \qquad\quad \mathcal{B}_{ij}[\cdot]:=1-M_i\mathcal{S}[\cdot]M_j \\
	M_{121}^{A_1,A_2}:&=\mathcal{B}_{11}^{-1}\left[M_1A_1M_{21}^{A_2}+M_1\mathcal{S}\big[M_{12}^{A_1}\big]M_{21}^{A_2}\right],
	\label{eq:defm121}
\end{align}
where $\mathcal{S}:\C^{2n\times 2n}\to \C^{2N\times 2N}$ is the covariance operator defined in (\ref{cov}).
If we replace $M_i$ by $M_{i,t}=M^{z_{i,t}}(\ii\eta_{i,t})$ in \eqref{eq:defm12}--\eqref{eq:defm121}, we then denote the corresponding quantities by $M_{12,t}^{A_1}, M_{121,t}^{A_1,A_2}$. We are now ready to state the main results of this section: 
\begin{proposition}
	\label{prop:aveflow}
	Fix small $n$--independent constants $\tau,\omega_*,\epsilon>0$, and for $i=1,2$ fix spectral parameters
	\begin{equation}
		\label{eq:defincond}
		\Lambda_{i,0}:=\left(\begin{matrix}
			\ii\eta_{i,0} & z_{i,0} \\
			\overline{z_{i,0}} & \ii\eta_{i,0}
		\end{matrix}\right),
	\end{equation}
	with $|z_{i,0}|\le 10$, $|\eta_{i,0}|\le \omega_*$. For $t\le T^*$ let $\Lambda_{i,t}$ be the solution of \eqref{eq:charflowmat} with initial condition $\Lambda_{i,0}$. Let $G_{i,t}:=(W_t-\Lambda_{i,t})^{-1}$, and let $M_{12,t}^A$ be defined in \eqref{eq:defm12}. Define the following control parameters and their time--dependent versions\footnote{ Recall that $\rho_i=\rho^{z_i}(\ii\eta_i)$, and so in the time dependent case we have $\rho_{i,t}=\rho^{z_{i,t}}(\ii\eta_{i,t})$. For example, using these notations, $\ell_t:=\min_{i=1,2}\rho_{i,t}\eta_{i,t}$.}:
	\begin{equation}
		\begin{split}
			\label{eq:defcontrolparam}
			\ell&=\ell(\eta_1,z_1,\eta_2,z_2):=\min_{i=1,2}\rho_i|\eta_i|, \qquad\qquad\qquad\qquad\qquad\qquad\qquad\qquad\qquad \ell_t:=\ell(\eta_{1,t},z_{1,t},\eta_{2,t},z_{2,t}), \\
			\gamma&=\gamma(\eta_1,z_1,\eta_2,z_2):=\frac{|z_1-z_2|^2+\rho_1|\eta_1|+\rho_2|\eta_2|+\left(\frac{\eta_1}{\rho_1}\right)^2+\left(\frac{\eta_2}{\rho_2}\right)^2}{|z_1-z_2|+\rho_1^2+\rho_2^2+\frac{\eta_1}{\rho_1}+\frac{\eta_2}{\rho_2}}, \qquad\quad \gamma_t:=\gamma(\eta_{1,t},z_{1,t},\eta_{2,t},z_{2,t}).
		\end{split}
	\end{equation}
	Assume that for some small $0<\xi\le \epsilon/10$, with very high probability, with $G_0^{z}(\ii\eta):=(H_0^z-\ii\eta)^{-1}$,  it holds
	\begin{equation}
		\label{eq:inassav}
		\big|\big\langle\big(G_0^{z_{1,0}}(\ii\eta_1)A_1G^{z_{2,0}}(\ii\eta_2)-M_{12}^{A_1}(\eta_1,z_{1,0},\eta_2,z_{2,0})\big)A_2\big\rangle\big|\le \frac{n^\xi}{\sqrt{n\ell_0}\gamma_0}\wedge \frac{n^\xi}{n|\eta_1\eta_2|}
	\end{equation}
	uniformly in $\lVert A_i\rVert\le 1$ and for any $|\eta_i|\in [|\eta_{i,0}|,n^{100}]$, where $\ell_0,\gamma_0$ are evaluated at\footnote{ We point out that we require \eqref{eq:inassav} for $(\eta_1,z_{1,0},\eta_2,z_{2,0})$, with $|\eta_i|\in[|\eta_{i,0}|,n^{100}]$, and not only for $\eta_i=\eta_{i,0}$.} $(\eta_1,z_{1,0},\eta_2,z_{2,0})$. Then,
	\begin{equation}
		\label{eq:goalav}
		\big|\langle (G_{1,t}A_1G_{2,t}-M_{12,t}^{A_1})A_2\rangle\big|\le \frac{n^{2\xi}}{\sqrt{n\ell_t}\gamma_t}\wedge \frac{n^{2\xi}}{n|\eta_{1,t}\eta_{2,t}|},
	\end{equation}
	with very high probability uniformly in $\lVert A_i\rVert\le 1$ and all $t\le T^*$ such that $n\ell_t\ge n^\epsilon$. \end{proposition}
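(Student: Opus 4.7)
The plan is to apply Itô calculus along the characteristic flow \eqref{eq:charflowmat} to the tracked quantity $\Phi_t^{A_1,A_2} := \langle (G_{1,t}A_1G_{2,t} - M_{12,t}^{A_1})A_2\rangle$, and run a bootstrap argument in $t$ closing a self-consistent estimate. A direct calculation using the OU dynamics \eqref{eq:OUmat} gives
\[
d G_{i,t} = \Bigl[\tfrac{1}{2} G_{i,t} - G_{i,t}\mathcal{S}[M_{i,t}] G_{i,t} + G_{i,t}\mathcal{S}[G_{i,t}-M_{i,t}] G_{i,t}\Bigr]dt - \tfrac{1}{\sqrt{n}} G_{i,t}\, d\mathcal{B}_t\, G_{i,t},
\]
where the characteristic equation $\partial_t \Lambda_{i,t} = -\mathcal{S}[M_{i,t}] - \Lambda_{i,t}/2$ has been used precisely to eliminate the $\Lambda_{i,t}$-dependent drift. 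An analogous identity holds for $M_{i,t}$ (by differentiating the MDE), so subtracting yields an SDE for $\Phi_t^{A_1,A_2}$ whose deterministic part splits into (a) a linear action of the two-body operator $\mathcal{B}_{12,t}^{-1}$ on $\Phi_t^{A_1,A_2}$ itself, (b) a nonlinear remainder consisting of products of single-resolvent fluctuations $\langle G_{i,t}-M_{i,t}\rangle$ against three-resolvent deterministic $M_{121}$-type chains, and (c) an Itô cross-term between $dG_{1,t}$ and $dG_{2,t}$ that produces $\tfrac{1}{n}\langle G_{1,t}A_1G_{2,t}\mathcal{S}[G_{2,t}]G_{1,t}A_2\rangle$-type expressions. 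The crucial cancellation is that (a) is actually trivial along the flow, so $\Phi_t^{A_1,A_2}$ is driven purely by a martingale $\mathcal{M}_t$ plus the controllable error terms.

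The nonlinear remainder is handled using Theorem \ref{local_thm} (giving $\langle G_{i,t}-M_{i,t}\rangle \prec 1/(n|\eta_{i,t}|)$), Lemma \ref{lemma_M} (giving $\|M_{121,t}^{A_1,A_2}\|\lesssim 1/(\eta_{*,t}\gamma_t)$) and a bootstrap hypothesis that \eqref{eq:goalav} already holds on $[0,t]$ with constant $n^{3\xi/2}$, which reduces all appearing three-resolvent chains to their deterministic approximations modulo lower-order errors. For the martingale $\mathcal{M}_t$ the BDG inequality reduces the bound to its predictable quadratic variation, which is again a three-resolvent object; after integrating in $s$ along the flow using the monotonicities and explicit estimates of Lemma \ref{lem:charprop}, in particular $\int_0^t |\eta_r|^{-\alpha}dr \lesssim |\eta_t|^{-(\alpha-1)}/\rho_t$ and the stability $\rho_t\sim\rho_0$, $|z_{1,t}-z_{2,t}|\sim|z_{1,0}-z_{2,0}|$ from \eqref{eq:sizez1z2}, one gets with very high probability $|\mathcal{M}_t|\lesssim n^{O(\xi)}/(\sqrt{n\ell_t}\gamma_t)$. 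Using the initial estimate \eqref{eq:inassav} and a standard grid argument in $A_1,A_2$ (Lipschitz continuity allows discretisation on an $n^{-C}$-fine net of unit-norm matrices), this closes the bootstrap and forces the first bound in \eqref{eq:goalav}. The second bound $n^{2\xi}/(n|\eta_{1,t}\eta_{2,t}|)$ is propagated in parallel, relying on the trivial a priori estimate $\|G_{i,t}\|\le 1/|\eta_{i,t}|$ to shortcut the chain of estimates whenever this is already smaller than the $\gamma_t$-based bound.

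The main obstacle is that the three-resolvent deterministic bound from Lemma \ref{lemma_M} depends on the new parameter $\gamma$ from \eqref{parameter}, which differs from its analogue in \cite{gumbel} by the presence of the terms $(\eta_i/\rho_i)^2$ and $\rho_i^2$ that are essential for capturing the correct edge behaviour uniformly in the spectrum. One must verify that integrating error contributions of the schematic form $1/(n\eta_{*,s}\gamma_s^2)$ along the characteristics regenerates only the target $1/(\sqrt{n\ell_t}\gamma_t)$, i.e.\ that $\gamma_t$ does not decay faster than allowed by the flow. This requires a delicate case distinction depending on which term in \eqref{parameter} dominates $\gamma_t$, and on whether $|z_{i,t}|$ is bulk or near the edge; it is at this point that the monotonicity of $t\mapsto|\eta_t|\rho_t$ and the integral identity $\int_s^t \rho_r/|\eta_r|dr = \tfrac{1}{\pi}[\log(\eta_s/\eta_t)-(t-s)/2]$ from Lemma \ref{lem:charprop} are used most essentially.
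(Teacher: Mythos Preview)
Your claim that ``the crucial cancellation is that (a) is actually trivial along the flow, so $\Phi_t^{A_1,A_2}$ is driven purely by a martingale $\mathcal{M}_t$ plus the controllable error terms'' is wrong, and this is the central gap in your argument. The characteristic flow \eqref{eq:charflowmat} is designed to cancel the self-energy terms $G_{i,t}\mathcal{S}[M_{i,t}]G_{i,t}$ for each resolvent separately, but it does \emph{not} cancel the two-body coupling. After It\^{o}'s formula (see \eqref{eq:fulleqaasimp}) the drift contains terms of the form
\[
2\sum_{i\ne j}\langle (G_{1,t}A_1G_{2,t}-M_{12,t}^{A_1})E_i\rangle\langle M_{21,t}^{A_2}E_j\rangle,
\]
and the deterministic coefficients $\langle M_{21,t}^{A_2}E_j\rangle$ are of size $1/\gamma_t$, which is large and certainly not a ``controllable error''. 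These linear terms couple $\Phi_t^{A_1,A_2}$ to $\Phi_t^{B_1,B_2}$ for $B_1,B_2\in\{E_+,E_-\}$, so one is forced to study a closed system rather than a single scalar equation.

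The actual mechanism is quite different from what you describe. One rewrites $E_1,E_2$ in the $E_\pm$ basis, exploits the deterministic estimate $|\langle M_{12,t}^{A_1}E_-\rangle|\lesssim |z_1-z_2|/\gamma_t$ (see \eqref{eq:explcompdet}) to suppress the $E_-$-direction, and then runs a Gronwall argument on the maximum $Y_t$ over $B_1,B_2\in\{A_1,A_2,E_+,E_-\}$. The linear term is absorbed not by cancellation but by the propagator bound
\[
\exp\Big(2\int_s^t|\langle M_{21,r}^{E_+}\rangle|\,\dd r\Big)\lesssim \frac{\gamma_s^2}{\gamma_t^2},
\]
and the proof closes because the forcing $\alpha_s$ satisfies $\int_0^t \alpha_s\gamma_s^{-1}(\gamma_s/\gamma_t)^2\,\dd s\lesssim \alpha_t$; the exponent $2$ here is sharp and is what makes the argument work in the regime $\gamma_s\sim|\eta_{1,s}|+|\eta_{2,s}|$. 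Your sketch of ``integrating $1/(n\eta_{*,s}\gamma_s^2)$'' misses this structure entirely: without the Gronwall propagator there is no reason the time integral of the (large) linear term reproduces the target bound. The monotonicity lemmas you cite are used, but only as auxiliary inputs inside this Gronwall step, not as a substitute for it.
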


We now briefly comment on the significance of the two control parameters \eqref{eq:defcontrolparam}. The condition $n\ell=1$ denotes the local fluctuation scale of the eigenvalues. We prove local laws always in the regime $n\ell\gg 1$, i.e.
above the fluctuation scale.
The parameter $\gamma$ represents a control on the stability operator. In fact, $\gamma$ is chosen so that (see \eqref{M_bound})
\begin{equation}
	\big\lVert M_{12}^A\big\rVert\lesssim \frac{\lVert A\rVert}{\gamma},
\end{equation}
uniformly in the spectrum. Furthermore, we also point out that the explicit formulas for
$\ell,\gamma$ simplify when $\Lambda_i$ are in the bulk of the spectrum; in fact in this case we have
\begin{equation}
	\label{eq:bulkcontrol}
	\ell\sim\min_{i=1,2}|\eta_i|, \qquad\quad \gamma\sim|z_1-z_2|^2+|\eta_1|+|\eta_2|.
\end{equation}
 Close to the edge of the spectrum, i.e. when $|z_i|\approx 1$, we instead have $\gamma \sim |z_1-z_2|+\eta_1^{2/3}+\eta_2^{2/3}$. \nc

We now state the local law in the isotropic case. We state it in a slightly more
general form as it will be needed later in the zag-part of the proof in Section~\ref{sec:zag}, but we point that the most delicate regime is $b=1$, since is the regime when we gain the most from the parameter $\gamma$.
\begin{proposition}
	\label{prop:isoflow}
	Fix small $n$--independent constants $\tau,\omega_*,\epsilon>0$, and for $i=1,2$ fix spectral parameters $\Lambda_{i,0}$ as in \eqref{eq:defincond}. Fix an exponent $b\in [0,1]$.
	For $t\le T^*$ let $\Lambda_{i,t}$ be the solution of \eqref{eq:charflowmat} with initial condition $\Lambda_{i,0}$. Let $G_{i,t}:=(W_t-\Lambda_{i,t})^{-1}$, and let $M_{12,t}^A$, $M_{121,t}^{A_1,A_2}$ be defined in \eqref{eq:defm12} and \eqref{eq:defm121}, respectively. Let $\ell_t, \gamma_t$ be from \eqref{eq:defcontrolparam}, and assume that for some small $0<\xi\le \epsilon/10$, with very high probability, it holds
	\begin{equation}
		\label{eq:inassiso}
		\begin{split}
			\big|\big\langle {\bm x}, \big(G_0^{z_{1,0}}(\ii\eta_1)A_1G_0^{z_{2,0}}(\ii\eta_2)-M_{12}^{A_1}(\eta_1,z_{1,0},\eta_2,z_{2,0})\big) {\bm y}\big\rangle\big|&\le 
			\frac{n^\xi (\rho^*)^{\frac{1-b}{2}} }{\sqrt{n} \eta_*^{\frac{3-b}{2}}\gamma_0^{\frac{b}{2}} }, \\
			\big|\big\langle {\bm x}, \big(G_0^{z_{1,0}}(\ii\eta_1)A_1G_0^{z_{2,0}}(\ii\eta_2)A_2G_0^{z_{1,0}}(\ii\eta_1)-M_{121}^{A_1,A_2}(\eta_1,z_{1,0},\eta_2,z_{2,0})\big) {\bm y}\big\rangle\big|&\le \frac{n^\xi}{\eta_*^{2-b}\gamma_0^b},
		\end{split}
	\end{equation}
	where $\eta_*:=|\eta_1|\wedge|\eta_2|$,  uniformly in $\lVert A_i\rVert\le 1$, $\lVert {\bm x}\rVert+\lVert {\bm y}\rVert\lesssim 1$, and for any $|\eta_i|\in [|\eta_{i,0}|,n^{100}]$, where $\ell_0,\gamma_0$ are evaluated at $(\eta_1,z_{1,0},\eta_2,z_{2,0})$. Then,
	\begin{equation}
		\label{eq:goaliso}
		\begin{split}
			\big|\big\langle {\bm x}, \big(G_{1,t}A_1G_{2,t}-M_{12,t}^{A_1}\big) {\bm y}\big\rangle\big|&\le
			\frac{n^{2\xi} (\rho^*_t)^{\frac{1-b}{2}} }{\sqrt{n} \eta_{*,t}^{\frac{3-b}{2}}\gamma_t^{\frac{b}{2}} },\\
			\big|\big\langle {\bm x}, \big(G_{1,t}A_1G_{2,t}A_2G_{1,t}-M_{121,t}^{A_1,A_2}\big) {\bm y}\big\rangle\big|&\le \frac{n^{2\xi}}{(\eta_{*,t})^{2-b}\gamma_t^b},
		\end{split}
	\end{equation}
	where $\eta_{*,t}:=|\eta_{1,t}|\wedge|\eta_{2,t}|$, with very high probability uniformly in $\lVert A_i\rVert\le 1$, $\lVert {\bm x}\rVert+\lVert {\bm y}\rVert\lesssim 1$, and all $t\le T^*$ such that $n\ell_t\ge n^\epsilon$.
\end{proposition}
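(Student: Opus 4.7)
My plan is to propagate the estimates \eqref{eq:goaliso} by a coupled Gronwall-type bootstrap along the characteristic flow, applied simultaneously to the isotropic errors
\begin{equation*}
\Phi^{(2)}_t({\bm x},{\bm y}) := \bigl\langle {\bm x}, \bigl(G_{1,t} A_1 G_{2,t} - M_{12,t}^{A_1}\bigr) {\bm y}\bigr\rangle, \qquad \Phi^{(3)}_t({\bm x},{\bm y}) := \bigl\langle {\bm x}, \bigl(G_{1,t} A_1 G_{2,t} A_2 G_{1,t} - M_{121,t}^{A_1,A_2}\bigr) {\bm y}\bigr\rangle.
\end{equation*}
These two objects must be analysed together because the quadratic variation of the stochastic equation satisfied by $\Phi^{(2)}_t$ generates three-resolvent chains, hence forces $\Phi^{(3)}_t$ into the picture. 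I would use a stopping time argument: define $\sigma$ to be the first $t \le T^*$ at which either of the target bounds in \eqref{eq:goaliso} fails with the prefactor $n^{2\xi}$ replaced by $n^{2\xi}/2$, and show via the differential inequality derived below that $\sigma$ exceeds the full range $\{t : n \ell_t \ge n^\epsilon\}$.

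The key mechanism is the Ito/characteristic calculus. A direct computation combining the OU evolution \eqref{eq:OUmat} of $W_t$ with the ODE \eqref{eq:charflowmat} for $\Lambda_{i,t}$ shows that, schematically,
\begin{equation*}
\dd G_{i,t} = G_{i,t}\,\mathcal{S}[G_{i,t} - M_{i,t}]\,G_{i,t}\,\dd t + \text{(lower-order drift)} - G_{i,t}\,\dd\widetilde{B}_t\,G_{i,t},
\end{equation*}
where $\widetilde{B}_t$ denotes the Hermitisation of $\dd B_t/\sqrt{n}$ and the characteristic flow has been engineered precisely to absorb the otherwise-present $G\,\mathcal{S}[M]\,G$ drift. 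Applying Ito to $G_{1,t} A_1 G_{2,t}$ and subtracting the time derivative of $M_{12,t}^{A_1}$, which I would verify by direct differentiation of the formula $M_{12,t}^{A_1}=\mathcal{B}_{12,t}^{-1}[M_{1,t} A_1 M_{2,t}]$ together with the relation $M_{i,t}=e^{(t-s)/2}M_{i,s}$ of Lemma \ref{lem:charprop}, one arrives at
\begin{equation*}
\dd\Phi^{(2)}_t = \mathcal{E}^{(2)}_t\,\dd t + \dd\mathcal{M}^{(2)}_t,
\end{equation*}
where $\mathcal{E}^{(2)}_t$ is a sum of averaged traces of the form $\langle(G_{1,t} B G_{2,t}-M_{12,t}^B)C\rangle$ multiplied by bounded $M$-factors (and is therefore controlled by Proposition \ref{prop:aveflow}), and $\mathcal{M}^{(2)}_t$ is a martingale whose quadratic variation is a sum of three-resolvent objects: deterministic $M_{121,t}$-terms plus the random $\Phi^{(3)}_t$ and its $1\leftrightarrow 2$ companion.

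An analogous Ito expansion produces a closed equation for $\Phi^{(3)}_t$. Its martingale has quadratic variation involving four-resolvent chains, but these do not require a new local law: the stopping region enforces $|\eta_{i,t}|\gtrsim n^{-1+\epsilon}/\rho_{i,t}$, so each four-chain can be split as one three-chain times one resolvent and estimated by combining the running a priori control on $\Phi^{(3)}_t$ with the trivial bound $\|G_{i,t}\|\le 1/|\eta_{i,t}|$. The inversion of $\mathcal{B}_{11,t}$ in the three-resolvent drift accounts for the extra factor $1/\eta_{*,t}$ in the statement. The coupled system is then closed by a BDG / moment-method estimate on the martingale parts followed by a Gronwall step in which the time integrals of $\rho_{i,t}/|\eta_{i,t}|$ and $|\eta_{i,t}|^{-\alpha}$ are converted to instantaneous terminal values via Lemma \ref{lem:charprop}; the logarithmic loss $\log(\eta_{i,0}/\eta_{i,t})$ is absorbed into one factor of $n^\xi$, which accounts for the $n^{2\xi}$ in \eqref{eq:goaliso}. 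A standard grid argument on an $n^{-C}$-net in $({\bm x},{\bm y})$ then upgrades the pointwise bound to a uniform high-probability statement.

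The main obstacle will be to show that the estimates actually self-propagate near the spectral edge, where $\rho_{i,t}$ is small and the parameter $\gamma_t$ from \eqref{eq:defcontrolparam} mixes $|z_{1,t}-z_{2,t}|^2$, $\rho_{i,t}|\eta_{i,t}|$, and $(\eta_{i,t}/\rho_{i,t})^2$ in a non-monotone way. The difficulty is that integrating $\mathcal{E}^{(2)}_t$ over $[0,t]$ produces powers of $\gamma_s$ at intermediate times $s < t$ that must ultimately be dominated by $\gamma_t$, and a naive bound loses the sharp factor $(\rho^*_t)^{(1-b)/2}$ essential for the $b=1$ case, which is the one that ultimately powers the improved averaged law \eqref{avlaw12_im}. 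To recover the correct exponent I expect to have to choose $b$ inside $\mathcal{E}^{(2)}_t$ differently from the $b$ used in the target bound, to exploit the monotonicity of $|\eta_t|\rho_t$ from Lemma \ref{lem:charprop} to relate $\gamma_s$ and $\gamma_t$, and to use the invariance \eqref{eq:sizez1z2} of $|z_{1,t}-z_{2,t}|$ along the flow. Managing this interplay uniformly in the spectrum is the core technical content of the zig step in the present setting.
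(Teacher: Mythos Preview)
Your proposal has the right overall architecture (coupled stopping-time bootstrap for $\Phi^{(2)}_t,\Phi^{(3)}_t$ along the characteristics, BDG for the martingales, Gronwall to close), but the description of the drift $\mathcal{E}^{(2)}_t$ is structurally wrong in a way that would make the argument fail. After the Itô expansion the drift of $\Phi^{(2)}_t$ is \emph{not} a sum of averaged traces times bounded $M$-factors. The dominant contribution is the \emph{linear isotropic} term
\[
2\sum_{i\ne j}\langle M_{12,t}^{A_1}E_i\rangle\,\bigl(G_{1,t}E_jG_{2,t}-M_{12,t}^{E_j}\bigr)_{{\bm x}{\bm y}},
\]
which is another $\Phi^{(2)}_t$-type quantity multiplied by the \emph{unbounded} deterministic coefficient $\langle M_{12,t}^{A_1}E_i\rangle\sim 1/\gamma_t$. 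This term cannot be absorbed into the error and must drive a genuine Gronwall argument. The paper handles it by passing to the $E_+,E_-$ basis (via \eqref{eq:12topm}, \eqref{eq:explcompdet}, and the decomposition \eqref{eq:Adec}), obtaining a $2\times 2$ linear ODE system for $(\Phi^{(2)}_t(E_+),\Phi^{(2)}_t(E_-))$, and controlling the propagator through $\exp\bigl(\int_s^t \langle M_{12,r}^{E_+}\rangle\,\dd r\bigr)\lesssim \eta_{*,s}/\eta_{*,t}$ (see \eqref{eq:newestiso}); the closure then uses the matrix Gronwall lemma from \cite[Lemma 5.6]{gumbel}. The same structure recurs for $\Phi^{(3)}_t$, where the linear part is a $4\times 4$ system in $\Upsilon^{\pm,\pm}_t$ with the same propagator bound (now with exponent $2$). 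Without isolating these linear isotropic terms you cannot recover the factor $\gamma_t^{-b/2}$ in the final estimate.

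Second, your treatment of the $\Phi^{(3)}_t$ quadratic variation is too crude. The martingale produces \emph{five}-resolvent isotropic chains such as $(G_{1,t}^*A_2G_{2,t}^*A_1\Im G_{1,t}A_1G_{2,t}A_2G_{1,t})_{{\bm y}{\bm y}}$ (see \eqref{eq:quadvar3gsa}), not four-resolvent ones, and there is no way to ``split as one three-chain times one resolvent'' inside an isotropic quantity. The paper instead uses the reduction inequalities \eqref{eq:redina}, which bound such chains by products of a three-resolvent isotropic factor and a two-resolvent \emph{average}; since these involve $|G|$ rather than $G$, the stopping time \eqref{eq:deftau2} must take a supremum over a whole range of initial $\eta$'s to make the integral representation of $|G|$ usable (cf.\ \eqref{eq:vabsval}). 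This is precisely why the statement requires the input \eqref{eq:inassiso} uniformly in $|\eta_i|\in[|\eta_{i,0}|,n^{100}]$.
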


 Note that the estimate on the  three-resolvent isotropic chain is far from optimal, it shows only 
that the random chain can be bounded by the size of its deterministic approximation $M_{121}$, see \eqref{M_3_bound}.
This will, however, be sufficient for our purpose since for the quadratic variation we anyway need only a bound on the
random chain itself.
We are now ready to present the proofs of Propositions~\ref{prop:aveflow}--\ref{prop:isoflow}. We first present the proof in the averaged case and then consider the isotropic case. For the sake of brevity throughout this section (and in Section~\ref{sec:edge}) we only focus on the complex case, all the additional terms appearing in the real case can be handled analogously, we thus omit the details.

\begin{proof}[Proof of Proposition~\ref{prop:aveflow}]
	
	The proof of this proposition is very similar to \cite[Proposition 5.4]{gumbel},  in fact, given the bounds on the smallest eigenvalues of the stability operator from Lemma~\ref{lemma_beta}, the proof is the basically the same as the proof of \cite[Proposition 5.4]{gumbel}. While in the edge regime it is necessary to follow very precisely two bad directions, as a consequence of the instability of the cusp regime (see e.g. \cite[Eq.s (5.46)--(5.47)]{gumbel}),  in the bulk there is only one bad direction to follow rendering the argument easier and simpler to follow. For this reason we refer the reader to \cite[Proposition 5.4]{gumbel} for a general proof which works everywhere in the spectrum, once the input Lemma~\ref{lemma_beta} is given, and here we present the simpler proof in the bulk for reader's convenience. Since there are still similarities with estimates performed in the proof of \cite[Proposition 5.4]{gumbel}, here  we highlight the main differences and omit several technical details that can be found in \cite[Section 5]{gumbel}.

	Recall the definition $G_{i,t}=(W_t-\Lambda_{i,t})^{-1}$, then for any $A_1,A_2\in\C^{2n\times 2n}$, by It\^{o}'s formula, we have
	\begin{equation}
		\label{eq:fulleqaasimp}
		\begin{split}
			\dd \langle (G_{1,t}A_1G_{2,t}-M_{12,t}^{A_1})A_2\rangle&=\sum_{a,b=1}^{2n}\partial_{ab}\langle G_{1,t}A_1G_{2,t}A_2\rangle\frac{\dd B_{ab,t}}{\sqrt{n}}+\langle (G_{1,t}A_1G_{2,t}-M_{12,t}^{A_1})A_2\rangle\dd t \\
			&\quad+2\sum_{i\ne j}\langle (G_{1,t}A_1G_{2,t}-M_{12,t}^{A_1})E_i\rangle\langle M_{21,t}^{A_2}E_j\rangle\dd t \\
			&\quad+2\sum_{i\ne j}\langle M_{12,t}^{A_1} E_i\rangle\langle (G_{2,t}A_2G_{1,t}-M_{21,t}^{A_2})E_j\rangle\dd t \\
			&\quad+2\sum_{i\ne j}\langle (G_{1,t}A_1G_{2,t}-M_{12,t}^{A_1})E_i\rangle\langle (G_{2,t}A_2G_{1,t}-M_{21,t}^{A_2})E_j\rangle\dd t \\
			&\quad+ \langle (G_{1,t}-M_{1,t})\rangle\langle G_{1,t}^2A_1G_{2,t}A_2\rangle\dd t + \langle (G_{2,t}-M_{2,t})\rangle\langle G_{2,t}^2A_2G_{1,t}A_1\rangle\dd t.
		\end{split}
	\end{equation}
	Here $\partial_{ab}$ denotes the directional derivative in the direction $(W_t)_{ab}$, $B_{ab,t}$ denotes the $(a,b)$--entry of the matrix Brownian motion $B_t$ from \eqref{eq:OUmat}, and $\sum_{i\ne j}$ denotes a summation over the pairs $(i,j)\in\{(1,2),(2,1)\}$. We point out that here we also used the following lemma describing the evolution of the deterministic approximation for the product of two resolvents:
	\begin{lemma}[Lemma 5.5 \cite{mesoCLT}]
		For any matrices $A_1,A_2\in\C^{2N\times 2N}$, we have
		\begin{equation}
			\label{eq:evolM12}
			\partial_t \langle M_{12,t}^{A_1}A_2\rangle=\langle M_{12,t}^{A_1}A_2\rangle+\langle \mathcal{S}[M_{12,t}^{A_1}]M_{21,t}^{A_2}\rangle.
		\end{equation}
	\end{lemma}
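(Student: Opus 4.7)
The plan is to differentiate the defining relation $\mathcal{B}_{12,t}[M_{12,t}^{A_1}] = M_{1,t} A_1 M_{2,t}$ in $t$ and exploit the very simple ODE for the single-resolvent approximation along the characteristics, namely $\partial_t M_{i,t} = \tfrac{1}{2} M_{i,t}$, which is a direct consequence of the second identity in \eqref{eq:easyrelchar}. Writing $Y_t := M_{12,t}^{A_1}$, the time derivative of the right-hand side collapses to $M_{1,t} A_1 M_{2,t}$ itself, while the time derivative of $\mathcal{B}_{12,t}$ acting on a fixed matrix $X$ equals $-M_{1,t} \mathcal{S}[X] M_{2,t}$, since each of the two $M_{i,t}$ factors contributes a $\tfrac{1}{2}$.

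Applying the Leibniz rule to $\mathcal{B}_{12,t}[Y_t] = M_{1,t} A_1 M_{2,t}$ therefore gives
\begin{equation*}
\mathcal{B}_{12,t}[\partial_t Y_t] \;=\; M_{1,t} A_1 M_{2,t} + M_{1,t} \mathcal{S}[Y_t] M_{2,t} \;=\; \mathcal{B}_{12,t}[Y_t] + M_{1,t} \mathcal{S}[Y_t] M_{2,t},
\end{equation*}
so that $\partial_t Y_t = Y_t + M_{12,t}^{\mathcal{S}[Y_t]}$. Pairing with $A_2$ and taking the normalised trace yields $\partial_t \langle Y_t A_2 \rangle = \langle Y_t A_2 \rangle + \langle M_{12,t}^{\mathcal{S}[Y_t]} A_2 \rangle$, and the lemma reduces to the algebraic identity
\begin{equation*}
\langle M_{12,t}^{\mathcal{S}[Y_t]} A_2 \rangle \;=\; \langle \mathcal{S}[Y_t] M_{21,t}^{A_2} \rangle.
\end{equation*}

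To establish this identity, note that $\mathcal{S}$ is self-adjoint under the normalised trace pairing (immediate from its explicit form $\mathcal{S}[X] = 2\langle X E_1\rangle E_2 + 2\langle X E_2\rangle E_1$), so the adjoint of $\mathcal{B}_{12,t}$ acts as $\mathcal{B}_{12,t}^{*}[B] = B - \mathcal{S}[M_{2,t} B M_{1,t}]$. Cyclicity of the trace then gives $\langle \mathcal{B}_{12,t}^{-1}[Z] A_2\rangle = \langle Z (\mathcal{B}_{12,t}^{*})^{-1}[A_2]\rangle$, which applied with $Z = M_{1,t}\mathcal{S}[Y_t] M_{2,t}$ produces $\langle \mathcal{S}[Y_t]\, M_{2,t} (\mathcal{B}_{12,t}^{*})^{-1}[A_2] M_{1,t}\rangle$. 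Setting $C := (\mathcal{B}_{12,t}^{*})^{-1}[A_2]$ and conjugating $\mathcal{B}_{12,t}^{*}[C] = A_2$ by $M_{2,t}$ on the left and $M_{1,t}$ on the right converts it into $\mathcal{B}_{21,t}[M_{2,t} C M_{1,t}] = M_{2,t} A_2 M_{1,t}$, so that $M_{2,t} C M_{1,t} = \mathcal{B}_{21,t}^{-1}[M_{2,t} A_2 M_{1,t}] = M_{21,t}^{A_2}$, and the identity follows.

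The proof is essentially algebraic and requires no estimates; the only delicate point, and the place where care is needed, is the book-keeping of which sides of $\mathcal{S}$ the factors $M_{1,t}$ and $M_{2,t}$ sit on when passing between $\mathcal{B}_{12,t}$ and $\mathcal{B}_{21,t}$ via the adjoint and trace cyclicity. Once this is tracked correctly, both the ODE for $Y_t$ and the duality identity drop out mechanically, and one obtains \eqref{eq:evolM12} without further ado.
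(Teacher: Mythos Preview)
Your proof is correct. The paper does not give its own proof of this lemma; it is imported verbatim from \cite[Lemma~5.5]{mesoCLT} (and the closely related first line of \eqref{eq:evolM121iso} is likewise deferred to \cite[Appendix~A.5]{mesoCLT} in the proof of Lemma~\ref{lemma_meta}). Your argument---differentiating the defining relation $\mathcal{B}_{12,t}[M_{12,t}^{A_1}]=M_{1,t}A_1M_{2,t}$ using $\partial_t M_{i,t}=\tfrac12 M_{i,t}$, then converting the resulting $\langle M_{12,t}^{\mathcal{S}[Y_t]}A_2\rangle$ into $\langle \mathcal{S}[Y_t] M_{21,t}^{A_2}\rangle$ via the intertwining $M_{2,t}(\mathcal{B}_{12,t}^{*})^{-1}[\cdot]M_{1,t}=\mathcal{B}_{21,t}^{-1}[M_{2,t}\cdot M_{1,t}]$---is precisely the standard route and matches the computation in the cited reference. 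There is nothing to add.
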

	
	For concreteness, in the remainder of the proof we assume $\eta_{1,t}\eta_{2,t}<0$, the other case being completely analogous. Define
	\begin{equation}
		\label{eq:defshorthand}
		Y_t:=\max_{B_1,B_2\in\{A_1,A_2,E_+,E_-\}}\big|\langle (G_{1,t}B_1G_{2,t}-M_{12,t}^{B_1})B_2\rangle\big|,
	\end{equation}
	and the stopping time
	\begin{equation}
		\label{eq:defstopt}
		\tau_1:=\inf\left\{t\ge 0:  Y_t=n^{2\xi} \alpha_t\right\}\wedge T^*, \qquad\quad  \alpha_t:= \frac{1}{n|\eta_{1,t}\eta_{2,t}|}\wedge \frac{1}{\sqrt{n\ell_t}\gamma_t}.
	\end{equation}
	Recall that by \eqref{eq:bulkcontrol} in the bulk regime $\gamma_t\sim |z_{1,t}-z_{2,t}|^2+|\eta_{1,t}|+|\eta_{2,t}|$. From the monotonicity of $t\mapsto|\eta_t|$ (see Lemma~\ref{lem:charprop}), and \eqref{eq:sizez1z2}  it also follows that $\gamma_s\gtrsim \gamma_t$ for $s\le t$. If $|z_{1,t}-z_{2,t}|\ge n^{-\delta}$, for some very small fixed $0<\delta\le \xi/10$, then the desired result follows by a simple (almost) global law (see e.g. \cite[Lemma 5.4]{mesoCLT}). For the remainder of the proof we thus assume $|z_{1,t}-z_{2,t}|\le n^{-\delta}$. Note that $|z_{1,t}-z_{2,t}|\sim |z_{1,s}-z_{2,s}|$ for any $s,t$, by 
	\eqref{eq:sizez1z2}, it thus makes no difference at which time we impose conditions on $|z_{1,t}-z_{2,t}|$.

	Writing 
	\begin{equation}
		\label{eq:12topm}
		E_1=\frac{1}{2}\big(E_++E_-\big), \qquad\quad E_2=\frac{1}{2}\big(E_+-E_-\big),
	\end{equation}
	and proceeding similarly to \cite[Eqs. (5.28)--(5.41)]{gumbel}, from \eqref{eq:fulleqaasimp}  we readily obtain
	\begin{equation}
		\label{eq:critical}
		\begin{split}
			&\dd \langle (G_{1,t}A_1G_{2,t}-M_{12,t}^{A_1})A_2\rangle \\
			&\qquad\qquad\quad= \langle G_{1,t}A_1G_{2,t}-M_{12,t}^{A_1}\rangle\langle M_{21,t}^{A_2}\rangle\dd t -\langle (G_{1,t}A_1G_{2,t}-M_{12,t}^{A_1})E_-\rangle\langle M_{21,t}^{A_2}E_-\rangle\dd t\\
			&\qquad\qquad\quad\quad+\langle M_{12,t}^{A_1} \rangle\langle G_{2,t}A_2G_{1,t}-M_{21,t}^{A_2}\rangle\dd t-\langle M_{12,t}^{A_1} E_- \rangle\langle (G_{2,t}A_2G_{1,t}-M_{21,t}^{A_2})E_-\rangle\dd t+ h_t\dif t\nc +\dd e_t.
		\end{split}
	\end{equation}
	Here $e_t,h_t$ are a martingale and a forcing term such that
	\begin{equation}
		\label{eq:estforc}
		\int_0^{t\wedge \tau_1} \big|h_s\big|\,\dd s+\left[\int_0^{\cdot}\dif e_s\right]_{t\wedge \tau_1}^{1/2}\lesssim n^\xi\alpha_{t\wedge\tau_1},
	\end{equation}
	with very high probability.  Here $[\cdot]_t$ denotes the quadratic variation process.
	
	By explicit but fairly tedious computations we find  (see  (\ref{app_M2}) in Appendix \ref{app:lemma_M}) (recall $\lVert A_i\rVert\le 1$)
		\begin{equation}
		\label{eq:explcompdet}
		\big|\langle M_{12,t}^{A_1} E_- \rangle\big|\lesssim \frac{|z_1-z_2|}{\gamma_t}\le \frac{n^{-\delta}}{\gamma_t}, \qquad\quad \big|\langle M_{21,t}^{A_2}\rangle\big|\le \big|\langle M_{21,t}^{E_+}\rangle\big|+\mathcal{O}\left( \frac{n^{-\delta}}{\gamma_t}\right).
	\end{equation}
	 Integrating \eqref{eq:critical} in time (from $0$ to $t\wedge\tau_1$) and using \eqref{eq:explcompdet} to estimate the $M_{12,t}, M_{21,t}$--terms in \eqref{eq:critical}, we thus obtain
	\begin{equation}
		Y_{t\wedge\tau_1}\le Y_0+\int_0^{t\wedge\tau_1} \left(2\big|\langle M_{21,s}^{E_+}\rangle\big|+\frac{n^{-\delta}}{\gamma_s}\right)Y_s\,\dd s+n^\xi\alpha_{t\wedge\tau_1}.
	\end{equation}
	Furthermore, by \cite[Eq. (5.31)]{mesoCLT}, we have
	\begin{equation}
		\label{eq:propagator}
		\exp\left(2\int_s^t \big|\langle M_{21,r}^{E_+}\rangle\big|\,\dd r\right)\le \frac{\gamma_s^2}{\gamma_t^2}.
	\end{equation}
	Finally, by \eqref{eq:propagator}, using a Gronwall inequality together with \eqref{eq:inassiso} to estimate $Y_0\le \alpha_{t\wedge\tau_1}$, we obtain $Y_{t\wedge\tau_1}\le n^\xi\alpha_{t\wedge\tau_1}$. We point out that here we also
	used (recall the definition of $\alpha_t$ from \eqref{eq:defstopt})
	\begin{equation}
		\label{eq:impfinalineq}
		\int_0^t\alpha_s \frac{1}{\gamma_s} \frac{\gamma_s^2}{\gamma_t^2}\,\dd s\lesssim \alpha_t,
	\end{equation}
	which follows by simple explicit computations. Note that the exponent $2$ in \eqref{eq:impfinalineq} is very important in the regime $\gamma_s\sim |\eta_{1,s}|+|\eta_{2,s}|$, in fact, in this regime, \eqref{eq:impfinalineq} would not be correct if $(\gamma_s/\gamma_t)$ had any other exponent larger than $2$. In the regime when $\gamma_s\sim |z_{1,s}-z_{2,s}|^2$, the exact power of $(\gamma_s/\gamma_t)$ does not matter. This proves $\tau_1=T^*$ with very high probability, concluding the proof.
\end{proof}

\begin{proof}[Proof of Proposition~\ref{prop:isoflow}]
	
	Within this proof  we use that the averaged local law from Proposition~\ref{prop:aveflow} is already proven. In fact the evolution of averaged quantities does not contain isotropic quantities, but the evolution of isotropic quantities contains both averaged and isotropic ones.
	
	We now write the evolution of isotropic products of two and three resolvents $G_{i,t}=(W_t-\Lambda_{i,t})^{-1}$ along the flow \eqref{eq:OUmat}. Let $A_1,A_2\in\C^{2n\times 2n}$, then, by It\^{o}'s formula, we have the following equations. For products of two resolvents we have: 
	\begin{equation}
		\begin{split}
			\label{eq:2gisoa}
			\dd (G_{1,t}A_1G_{2,t}-M_{12,t}^{A_1})_{\bm x\bm y}&
			=\dd \mathcal{E}_t+(G_{1,t}A_1G_{2,t}-M_{12,t}^{A_1})_{\bm x\bm y}\dd t 
			+2\sum_{i\ne j}\langle M_{12,t}^{A_1}E_i\rangle (G_{1,t}E_jG_{2,t}-M_{12,t}^{E_j})_{\bm x\bm y}\dd t \\
			&\quad +2\sum_{i\ne j}(M_{12,t}^{E_j})_{\bm x\bm y} \langle (G_{1,t}A_1G_{2,t} -M_{12,t}^{A_1})E_i\rangle \dd t\\
			&\quad+2\sum_{i\ne j}\langle (G_{1,t}A_1G_{2,t} -M_{12,t}^{A_1})E_i\rangle (G_{1,t}E_jG_{2,t}-M_{12,t}^{E_j})_{\bm x\bm y}\dd t \\
			&\quad+\langle G_{1,t}-m_{1,t}\rangle  (G_{1,t}^2A_1G_{2,t})_{\bm x\bm y} \dd t +\langle G_{2,t}-m_{1,t}\rangle \dd (G_{1,t}A_1G_{2,t}^2)_{\bm x\bm y} \dd t,
		\end{split}
	\end{equation}
	with
	\begin{equation}
		\dd \mathcal{E}_t:= \frac{1}{\sqrt{n}}\sum_{a,b=1}^{2n}\partial_{ab} (G_{1,t}A_1G_{2,t})_{\bm x\bm y} \dd B_{ab,t}.
	\end{equation}
	 Here we recall that $\partial_{ab}$ denotes the directional derivative in the direction $(W_t)_{ab}$, $B_{ab,t}$ denotes the $(a,b)$--entry of the matrix Brownian motion $B_t$ from \eqref{eq:OUmat}, and $\sum_{i\ne j}$ denotes a summation over the pairs $(i,j)\in\{(1,2),(2,1)\}$.  For products of three resolvents we have:
	\begin{equation}
		\begin{split}
			\label{eq:3gisosubma}
			&\dd (G_{1,t}A_1G_{2,t}A_2G_{1,t}-M_{121,t}^{A_1,A_2})_{\bm x\bm y} \\
			&=\dd \widehat{\mathcal{E}}_t+\frac{3}{2}(G_{1,t}A_1G_{2,t}A_2G_{1,t}-M_{121,t}^{A_1,A_2})_{\bm x\bm y}  \dd t\\
			&\quad+(LT)+(EI)+(EII)+(EIII).
		\end{split}
	\end{equation}
	with
	\begin{equation}
		\dd \widehat{\mathcal{E}}_t:= \frac{1}{\sqrt{n}}\sum_{a,b=1}^{2n}\partial_{ab} (G_{1,t}A_1G_{2,t}A_2G_{1,t})_{\bm x\bm y} \dd B_{ab,t},
	\end{equation}
	and
	\begin{equation}
		\begin{split}
			(LT):& =\sum_{i\ne j}\langle M_{21,t}^{A_2}E_i\rangle (G_{1,t}A_1G_{2,t}E_jG_{1,t}-M_{121,t}^{A_1,E_j})_{\bm x\bm y}\dd t \\
			&\quad+\sum_{i\ne j}\langle M_{12,t}^{A_1}E_i\rangle (G_{1,t}E_jG_{2,t}A_2G_{1,t}-M_{121,t}^{E_j,A_2})_{\bm x\bm y}\dd t \\
			(EI):&=\sum_{i\ne j}\langle (G_{1,t}A_1G_{2,t}A_2G_{1,t}-M_{121,t}^{A_1,A_2})E_i\rangle \big[(M_{11,t}^{E_j})_{\bm x\bm y}+(G_{1,t}E_jG_{1,t}-M_{11,t}^{E_j})_{\bm x\bm y}\big]\dd t \\
			&\quad+\sum_{i\ne j}\langle M_{121,t}^{A_1,A_2}E_i \rangle (G_{1,t}E_jG_{1,t}-M_{11,t}^{E_j})_{\bm x\bm y}\dd t\\
			(EII):&=\sum_{i\ne j}\langle (G_{1,t}A_1 G_{2,t}-M_{12,t}^{A_1})E_i\rangle\big[(M_{121,t}^{E_j,A_2})_{\bm x\bm y}+(G_{1,t}E_jG_{2,t}A_2G_{1,t}-M_{121,t}^{E_j,A_2})_{\bm x\bm y}\big]\dd t\\
			&\quad+\sum_{i\ne j}\langle (G_{2,t}A_2 G_{1,t}-M_{21,t}^{A_2})E_i\rangle\big[(M_{121,t}^{A_1,E_j})_{\bm x\bm y}+(G_{1,t}A_1G_{2,t}E_jG_{1,t}-M_{121,t}^{A_1,E_j})_{\bm x\bm y}\big]\dd t\\
			(EIII):&=\langle G_{1,t}-m_{1,t}\rangle  (G_{1,t}^2A_1G_{2,t}A_2G_{1,t})_{\bm x\bm y} \dd t +\langle G_{2,t}-m_{2,t}\rangle (G_{1,t}A_1G_{2,t}^2A_2G_{1,t})_{\bm x\bm y} \dd t \\
			&\quad+\langle G_{1,t}-m_{1,t}\rangle  (G_{1,t}A_1G_{2,t}A_2G_{1,t}^2)_{\bm x\bm y} \dd t,
		\end{split}
	\end{equation}

	We point out that in the derivation of \eqref{eq:2gisoa}--\eqref{eq:3gisosubma} we also used the following lemma whose proof is postponed to Appendix~\ref{app:lemma_M}. 
	\begin{lemma}\label{lemma_meta}
		For any matrices $A_1,A_2\in\C^{2N\times 2N}$ and vectors ${\bm x}, {\bm y}\in \C^{2N}$, we have
		\begin{equation}
			\begin{split}
				\label{eq:evolM121iso}
				\partial_t (M_{12,t}^{A_1})_{{\bm x}{\bm y}}&=(M_{12,t}^{A_1})_{{\bm x}{\bm y}}+(\mathcal{S}[M_{12,t}^{A_1}]M_{21,t}^I)_{{\bm x}{\bm y}}, \\
				\partial_t (M_{121,t}^{A_1,A_2})_{\bm x\bm y}&=(M_{121,t}^{A_1,A_2})_{\bm x\bm y}+(\mathcal{S}[M_{121,t}^{A_1,A_2}]M_{11,t}^I)_{\bm x\bm y}+(\mathcal{S}[M_{12,t}^{A_1}]M_{121,t}^{I,A_2}))_{\bm x\bm y} +(M_{121,t}^{A_1,I})\mathcal{S}[M_{21,t}^{A_2}])_{\bm x\bm y}.
			\end{split}
		\end{equation}
	\end{lemma}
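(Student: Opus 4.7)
The plan is to reduce Lemma \ref{lemma_meta} to a single scaling identity for $M_{i,t}$ along the characteristic flow,
\begin{equation}
\partial_t M_{i,t} = \tfrac{1}{2} M_{i,t},
\end{equation}
obtained by differentiating the Matrix Dyson equation $-M_{i,t}^{-1} = \Lambda_{i,t} + \mathcal{S}[M_{i,t}]$ in $t$, substituting the characteristic ODE $\partial_t \Lambda_{i,t} = -\mathcal{S}[M_{i,t}] - \Lambda_{i,t}/2$ from \eqref{eq:charflowmat}, and using the MDE once more to eliminate $\Lambda_{i,t}$. A short algebraic rearrangement yields $\mathcal{B}_{ii,t}[\partial_t M_{i,t}] = \tfrac{1}{2}\,\mathcal{B}_{ii,t}[M_{i,t}]$, and invertibility of $\mathcal{B}_{ii,t}$ on the relevant subspace gives the claim. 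This is the only way the characteristic flow enters the proof; everything else is linear algebra on the deterministic $M$-matrices.

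To prove the first line of \eqref{eq:evolM121iso}, differentiate the defining relation $\mathcal{B}_{12,t}[M_{12,t}^{A_1}] = M_{1,t} A_1 M_{2,t}$ in $t$. Using $\partial_t M_{i,t} = M_{i,t}/2$ on both sides, the right-hand side reproduces itself, while the left-hand side decomposes as $\mathcal{B}_{12,t}[\partial_t M_{12,t}^{A_1}] - M_{1,t} \mathcal{S}[M_{12,t}^{A_1}] M_{2,t}$. Rearranging and inverting $\mathcal{B}_{12,t}$ gives
\begin{equation}
\partial_t M_{12,t}^{A_1} = M_{12,t}^{A_1} + \mathcal{B}_{12,t}^{-1}\big[ M_{1,t} \mathcal{S}[M_{12,t}^{A_1}] M_{2,t} \big].
\end{equation}
A short structural manipulation, exploiting that $\mathcal{S}$ takes values in $\mathrm{span}(E_1, E_2)$ together with the $2\times 2$ block form \eqref{Mmatrix} of $M_{i,t}$, rewrites the last term as $\mathcal{S}[M_{12,t}^{A_1}] M_{21,t}^I$, which is the claimed expression. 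Testing the resulting matrix identity against arbitrary vectors $\bm{x}, \bm{y}$ produces the isotropic statement.

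The second identity follows by the same differentiate-and-invert scheme, applied this time to the recursive definition \eqref{eq:defm121}, namely $\mathcal{B}_{11,t}[M_{121,t}^{A_1,A_2}] = M_{1,t} A_1 M_{21,t}^{A_2} + M_{1,t} \mathcal{S}[M_{12,t}^{A_1}] M_{21,t}^{A_2}$. Differentiating both sides in $t$ and substituting the scaling of $M_{1,t}$, the first identity for $\partial_t M_{12,t}^{A_1}$, and the analogous identity for $\partial_t M_{21,t}^{A_2}$, the right-hand side reorganises into four natural packages: the overall scaling contribution $M_{121,t}^{A_1,A_2}$; a term from the derivative of $\mathcal{B}_{11,t}$, matching $\mathcal{S}[M_{121,t}^{A_1,A_2}] M_{11,t}^I$; and the two symmetric contributions $\mathcal{S}[M_{12,t}^{A_1}] M_{121,t}^{I,A_2}$ and $M_{121,t}^{A_1,I} \mathcal{S}[M_{21,t}^{A_2}]$ coming from differentiating the $\mathcal{S}[M_{12,t}^{A_1}]$-factor and the $M_{21,t}^{A_2}$-factor respectively. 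Applying \eqref{eq:defm121} once more to recognise the intermediate sums and then inverting $\mathcal{B}_{11,t}$ concludes.

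The main obstacle is the bookkeeping in the $M_{121}$-identity: each of the two summands in \eqref{eq:defm121} produces several subterms under differentiation, and pairing each regrouped package correctly with one of the four advertised terms requires repeated invocation of the first identity together with the defining relations of $M_{12}$ and $M_{121}$. Because the statement is purely about the deterministic Dyson equation and the stability operators $\mathcal{B}_{12,t}$, $\mathcal{B}_{11,t}$ under the characteristic flow, no probabilistic input enters, and the (otherwise routine) computation is naturally relegated to the appendix.
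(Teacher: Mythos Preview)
Your strategy---differentiate the recursive definitions of $M_{12,t}^{A_1}$ and $M_{121,t}^{A_1,A_2}$ and feed in the scaling $\partial_t M_{i,t}=\tfrac12 M_{i,t}$ along the characteristics---is correct and delivers
\[
\partial_t M_{12,t}^{A_1}=M_{12,t}^{A_1}+\mathcal{B}_{12,t}^{-1}\bigl[M_{1,t}\,\mathcal{S}[M_{12,t}^{A_1}]\,M_{2,t}\bigr]=M_{12,t}^{A_1}+M_{12,t}^{\mathcal{S}[M_{12,t}^{A_1}]},
\]
which is precisely the cancellation used to derive~\eqref{eq:2gisoa}. The paper takes a different route: it imports the two-chain identity from \cite{mesoCLT} and obtains the three-chain version by the \emph{meta argument} of \cite{CHNR,CEHS23}---one writes the It\^o drift of the random chain $(G_{1}A_1G_{2}A_2G_{1})_{\bm x\bm y}$, replaces every resolvent product by its deterministic approximation, and reads off $\partial_t M_{121,t}$ term by term. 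The meta argument scales to arbitrary chain length with essentially no bookkeeping, whereas your direct computation is self-contained but must match many subterms against the recursions, exactly as you anticipate.

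One step in your outline does not go through: the ``short structural manipulation'' identifying $\mathcal{B}_{12,t}^{-1}\bigl[M_{1,t}\,\mathcal{S}[M_{12,t}^{A_1}]\,M_{2,t}\bigr]$ with the matrix product $\mathcal{S}[M_{12,t}^{A_1}]\,M_{21,t}^I$ fails as a matrix identity. By linearity it would force $M_{12,t}^{E_j}=E_j M_{21,t}^I$, but $E_j M_{21,t}^I$ has a vanishing block row while $M_{12,t}^{E_j}$ does not; already for $z_1=z_2=0$ one finds $M_{12}^{E_1}=\tfrac{m_1m_2}{1-m_1^2m_2^2}E_1+\tfrac{m_1^2m_2^2}{1-m_1^2m_2^2}E_2$ versus $E_1 M_{21}^I=\tfrac{m_1m_2}{1-m_1m_2}E_1$. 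The two expressions \emph{do} agree after tracing against any $A_2$ (consistent with~\eqref{eq:evolM12}), but not isotropically. Your derivation lands on the right object $M_{12,t}^{\mathcal{S}[M_{12,t}^{A_1}]}$; only the cosmetic identification with the displayed form is unjustified, and the analogous regroupings in the $M_{121}$-case inherit the same issue.
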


	We now introduce the short--hand notations
	\begin{equation}
		\label{eq:defYiso}
		\begin{split}
			Y_{12,t}(B_1):&= (G_{1,t}B_1G_{2,t}-M_{12,t}^{B_1})_{{\bm x}{\bm y}}, \\
			Y_{121,t}(B_1,B_2):&=(G_{1,t}B_1G_{2,t}B_2G_{1,t}-M_{121,t}^{B_1,B_2})_{{\bm x}{\bm y}},
		\end{split}
	\end{equation}
	where the vectors ${\bm x}, {\bm y}$ are fixed and omitted from the notation, and define the times
	\begin{equation}
		T_i^\epsilon=T^\epsilon(\eta_{i,0},z_{i,0}):=\min\{t>0: N\eta_{i,t}\rho_{i,t}=n^\epsilon\}
	\end{equation}
	such that $\eta_{i,t}\rho_{i,t}$ reaches the threshold $n\eta_{i,T_i^\epsilon}\rho_{i,T_i^\epsilon}=n^\epsilon$. For the deterministic approximations we have the bounds (see \eqref{M_bound} and \eqref{M_3_bound} of Lemma \ref{lemma_M})
	\begin{equation}
		\label{eq:detbounds}
		\lVert M_{12,t}^{A_1}\rVert\lesssim \frac{\lVert A_1\rVert}{\gamma_t}, \qquad\quad \lVert M_{121,t}^{A_1,A_2}\rVert\lesssim \frac{\lVert A_1\rVert\lVert A_2\rVert}{\eta_{*,t}\gamma_t}.
	\end{equation}
	
 From now on, for simplicity of the presentation, we assume that $b=1$, which is anyway the most delicate regime, as we pointed out before the statement of Proposition~\ref{prop:isoflow}.	Define the stopping time
	\begin{equation}
		\label{eq:deftau2}
		\tau_2:=\inf\left\{t\ge 0:\sup^*_{\widetilde{\eta}_{i,0}}Y_{12,t}=\frac{n^{2\xi} }{\sqrt{n\widetilde{\gamma}_t} \widetilde{\eta}_{*,t}}, \qquad \sup^*_{\widetilde{\eta}_{i,0}}Y_{121,t}=\frac{n^{2\xi}}{\widetilde{\eta}_{*,t}\widetilde{\gamma}_t}\right\}\wedge T_1^\epsilon\wedge T_2^\epsilon,
	\end{equation}
	 where
	\[
	Y_{12,t}:=\max_{B_1\in\{A_1,E_+,E_-\}}\big|Y_{12,t}(B_1)\big|, \qquad\quad Y_{121,t}:=\max_{B_1,B_2\in\{A_1,A_2,E_+,E_-\}} \big|Y_{121,t}(B_1,B_2)\big|.
	\]
	Here $\widetilde{\gamma}_t:=\gamma(\widetilde{\eta}_{1,t},z_{1,t},\widetilde{\eta}_{2,t},z_{2,t})$, with $\widetilde{\eta}_{i,t}$ being the solution of \eqref{eq:char} with initial condition $\widetilde{\eta}_{i,0}$, and $\widetilde{\ell}_t$, $\widetilde{\eta}_{*,t}$ are defined similarly. The supremum $\sup^*$ is taken over all the initial conditions $\widetilde{\eta}_{i,0}$ with $\widetilde{\eta}_{i,0}\in [\eta_{i,0},\omega_*]$. We point out that in \eqref{eq:deftau2} we need to take the supremum over a larger set of initial conditions $\widetilde{\eta}_{i,0}$ only because at some point (see \eqref{eq:longred} below) we need to consider quantities involving $|G|$, and for $|G|$ we use an integral representation which requires integration over an entire segment of spectral parameters.
	
	In the following, for $t\le \tau_2$, we will often use the bound
	\begin{equation}
		\label{eq:usb}
		\sup_{\eta_{i,t}\le\check{\eta}_i\le \omega_*/2} \left[\sqrt{n\check{\gamma}_t} \check{\eta}_{*,t} Y_{12,t}+\widehat{\eta}_{*,t}\check{\gamma}_tY_{121,t}\right]\le n^{2\xi},
	\end{equation}
	with very high probability. Here $\check{\gamma}_t:=\gamma(\check{\eta}_1,z_{1,t},\check{\eta}_2,z_{2,t})$, and $\check{\ell}_t$, $\check{\eta}_{*,t}$ are defined similarly. We point that in \eqref{eq:usb} we also used that, as a consequence of \eqref{lem:ODEtheo},  for any fixed $t\le T_1^\epsilon\wedge T_2^\epsilon$ and any $\check{\eta}_i\in[\eta_{i,0},\omega_*/2]$ there is a $\widetilde{\eta}_{i,0}\in [\eta_{i,0},\omega_*]$ such that $\widetilde{\eta}_{i,t}=\check{\eta}_i$.
	
	We now estimate the terms in the rhs. of \eqref{eq:2gisoa}--\eqref{eq:3gisosubma} one by one. We first present the estimates for \eqref{eq:2gisoa} and then for \eqref{eq:3gisosubma}.
	
	\medskip
	
	\textbf{Estimates for \eqref{eq:2gisoa}:} We start computing the quadratic variation of the stochastic term
	\begin{equation}
		\label{eq:explquadvar}
		\dd[\mathcal{E}_t,\mathcal{E}_t]=\frac{1}{n\eta_{1,t}^2} (\Im G_{1,t})_{\bm x\bm x} (G_{2,t}^*A_1\Im G_{1,t}A_1G_{2,t})_{\bm y \bm y}\dd t+\frac{1}{n\eta_{2,t}^2} (G_{1,t}A_1\Im G_{2,t}A_1G_{1,t}^*)_{\bm x \bm x} (\Im G_{2,t})_{\bm y\bm y} \dd t.
	\end{equation}
	Using \eqref{eq:usb} together with the second bound in \eqref{eq:detbounds}, we estimate
	\begin{equation}
		\dd[\mathcal{E}_t,\mathcal{E}_t]\lesssim  \frac{n^{2\xi}\rho_{1,t}}{n\eta_{1,t}^2\eta_{*,t}\gamma_t}+\frac{n^{2\xi}\rho_{2,t}}{n\eta_{2,t}^2\eta_{*,t}\gamma_t}\dd t.
	\end{equation}
	 We point out that in the following we will often use several properties of the characteristics from Lemma~\ref{lem:charprop} even if not stated explicitly. \nc By the Burkholder--Davis--Gundy inequality we thus obtain
	\begin{equation}
		\label{eq:BDG1}
		\sup_{0\le r\le t}\left|\int_0^{r\wedge\tau_2}\dd \mathcal{E}_s\right|\lesssim n^{3\xi/2} \sqrt{\int_0^{t\wedge\tau_2} \frac{1}{n\eta_{*,s}\gamma_s}\left(\frac{\rho_{1,s}}{\eta_{1,s}^2}+\frac{\rho_{2,s}}{\eta_{2,s}^2}\right)\dd s}\lesssim \frac{n^{3\xi/2}}{\sqrt{n\gamma_{t\wedge\tau_2}}\eta_{*,{t\wedge\tau_2}}},
	\end{equation}
	where we used $\gamma_s\gtrsim \gamma_t$, for $s\le t$, and the first relation in \eqref{eq:explint}.
	
	The second term in the rhs. of \eqref{eq:2gisoa} can be neglected since, by considering the evolution of $e^{-t}(G_{1,t}A_1G_{2,t}-M_{12,t}^{A_1})_{{\bm x}{\bm y}}$, it amounts to a simple, negligible, rescaling of size $e^t\sim 1$. Next, by Proposition~\ref{prop:aveflow}, we estimate the terms in the second line of \eqref{eq:2gisoa} by
	\begin{equation}
		\left|\int_0^t (M_{12,s}^{E_j})_{\bm x\bm y} \langle (G_{1,s}A_1G_{2,s} -M_{12,s}^{A_1})E_i\rangle \dd s\right|\lesssim \int_0^t \frac{1}{\gamma_s}\cdot \left(\frac{n^\xi}{\sqrt{n\ell_s}\gamma_s}\wedge \frac{n^\xi}{n\eta_{*,s}^2}\right)\,\dd s \lesssim  \frac{n^\xi}{\sqrt{n\ell_t}\gamma_t}\le \frac{1}{\sqrt{n\gamma_t}\eta_{*,t}},
	\end{equation}
	where in the last inequality we used that  $\sqrt{\ell_t\gamma_t}\ge \eta_{*,t}$.
	In the second inequality we also used $\gamma_s\gtrsim \gamma_t$, for $s\le t$
	and the first relation in \eqref{eq:explint}. Similarly, using the definition of stopping time $\tau_2$, we estimate
	\begin{equation}
		\left|\int_0^{t\wedge\tau_2} \langle (G_{1,s}A_1G_{2,s} -M_{12,s}^{A_1})E_i\rangle (G_{1,s}E_jG_{2,s}-M_{12,s}^{E_j})_{\bm x\bm y}\dd s\right|\lesssim \frac{n^{2\xi}}{ \sqrt{n\ell_s}}\cdot \frac{1}{\sqrt{n\gamma_{t\wedge\tau_2}}\eta_{*,{t\wedge\tau_2}}}  \le  \frac{1}{\sqrt{n\gamma_{t\wedge\tau_2}}\eta_{*,{t\wedge\tau_2}}}.
	\end{equation}
	 where in the last inequality we used that $\sqrt{n\ell_t}\ge n^{\epsilon/2}>n^{2\xi}$. 
	
	For the terms in the last line of \eqref{eq:3gisosubma}, by spectral decomposition, for $t\le \tau$, we use
\begin{equation}
 \big| (G_{1,t}^2A_1G_{2,t})_{\bm x\bm y}\big|\le \frac{1}{\eta_{1,t}} (\Im G_{1,t})_{\bm x\bm x}^{1/2} (G_{2,t}^*A_1\Im G_{1,t}A_1G_{2,t})_{\bm y\bm y}^{1/2}\lesssim \frac{n^\xi\sqrt{\rho_{1,t}}}{\eta_{1,t}\sqrt{\eta_{*,t}\gamma_t}}.
\end{equation}
	We thus estimate those terms by
	\begin{equation}
		\label{eq:estwred}
		\left|\int_0^t \langle G_{1,s}-m_{1,s}\rangle  (G_{1,s}^2A_1G_{2,s})_{\bm x\bm y} \dd s\right| \lesssim \int_0^t \frac{n^\xi}{n\eta_{1,s}}\cdot  \frac{n^\xi\sqrt{\rho_{1,s}}}{\eta_{1,s}\sqrt{\eta_{*,s}\gamma_s}}\,\dd s\lesssim \frac{n^\xi}{\sqrt{n\gamma_t}\eta_{*,t}}.
	\end{equation}
	
	From \eqref{eq:2gisoa}, writing $E_1,E_2$ as in \eqref{eq:12topm}, combining \eqref{eq:BDG1}--\eqref{eq:estwred}, and using \eqref{eq:explcompdet}, we conclude
	\begin{align}
		\label{eq:almthtwo}
		(G_{1,{t\wedge\tau_2}}A_1G_{2,{t\wedge\tau_2}}-M_{12,{t\wedge\tau_2}}^{A_1})_{\bm x\bm y}=&(G_{1,0}A_1G_{2,0}-M_{12,0}^{A_1})_{\bm x\bm y}+\int_0^{t\wedge\tau_2}\langle M_{12,s}^{A_1}\rangle(G_{1,s}G_{2,s}-M_{12,s}^I)_{\bm x\bm y}\, \dd s\nonumber\\
		&+\mathcal{O}\left(\frac{n^{3\xi/2}}{\sqrt{n\gamma_{t\wedge\tau_2}}\eta_{*,{t\wedge\tau_2}}}\right).
	\end{align}

 From now on we assume that $A_1\in \{E_+, E_-\}$ without loss of generality, for presentational simplicity. In fact, if this is not the case then we can decompose
\begin{equation}
\label{eq:Adec}
A_1=\langle A_1 E_+\rangle E_++\langle A_1 E_-\rangle E_-+A_c,
\end{equation}
with $A_c$ so that $\lVert M_{12,t}^{A_c}\rVert\lesssim 1$, uniformly in $t\ge 0$. This last inequality holds since the subspace corresponding to the smallest eigenvalues of the stability operator is spanned by $E_-,E_+$ (see e.g. Appendix~\ref{app:lemma_M}).
 \nc
	
	Finally, using \eqref{eq:inassiso} to estimate the initial condition in \eqref{eq:almthtwo}, we obtain 
	\begin{equation}
	\label{eq:almtf}
	\begin{split}
		Y_{12, t\wedge\tau_2}(E_+)&= \int_0^{t\wedge\tau_2}\big[\langle M_{12,s}^{E_+}\rangle Y_{12,s}(E_+)+\langle M_{12,s}^{E_+}E_-\rangle Y_{12,s}(E_-)\big]\, \dd s+ \mathcal{O}\left(\frac{n^{3\xi/2}}{\sqrt{n\gamma_{t\wedge\tau_2}}\eta_{*,{t\wedge\tau_2}}}\right), \\
		Y_{12, t\wedge\tau_2}(E_-)&= \int_0^{t\wedge\tau_2}\big[\langle M_{12,s}^{E_-}\rangle Y_{12,s}(E_+)-\langle M_{12,s}^{E_-}E_-\rangle Y_{12,s}(E_-)\big]\, \dd s+\mathcal{O}\left(\frac{n^{3\xi/2}}{\sqrt{n\gamma_{t\wedge\tau_2}}\eta_{*,{t\wedge\tau_2}}}\right).
		\end{split}
	\end{equation}
	We are now ready to conclude the estimate for $Y_{12,t}$. In the following we omit some details and present only the main steps since this is very similar to the proof in \cite[Eqs. (5.43)--(5.46) and Eqs. (5.58)--(5.59)]{gumbel}.
	First, we introduce the short--hand notations
	\[
	\mathcal{Y}_t:=\left(\begin{matrix}
	Y_{12, t\wedge\tau_2}(E_+) \\
	Y_{12, t\wedge\tau_2}(E_-)
	\end{matrix}\right), \qquad\qquad\quad \mathcal{M}_t:=\left(\begin{matrix}
	\langle M_{12,s}^{E_+}\rangle & \langle M_{12,s}^{E_+}E_-\rangle \\
	\langle M_{12,s}^{E_-}\rangle & -\langle M_{12,s}^{E_-}E_-\rangle
	\end{matrix}\right),
	\]
	and
	\[
	\mathcal{F}_t:=\mathrm{last\,\, three \,\, lines \,\, of \,\, \eqref{eq:2gisoa}}.
	\]
	Then, from \eqref{eq:2gisoa}, combining \eqref{eq:explquadvar}--\eqref{eq:Adec}, we can rewrite \eqref{eq:almtf} in the more precise form
	\begin{equation}
	\label{eq:matrf}
	\mathcal{Y}_{t\wedge\tau_2}=\int_0^{t\wedge\tau_2} \mathcal{M}_s\mathcal{Y}_s\,\dif s+\int_0^{t\wedge\tau_2} \big[\mathcal{F}_s\dif s+\dif\mathcal{E}_s\big],
	\end{equation}
	with
	\begin{equation}
	\left(\int_0^{t\wedge\tau_2} \lVert \mathcal{F}_s\rVert\,\dif s\right)^2+\int_0^{t\wedge\tau_2}\lVert \mathcal{C}_s\rVert \,\dif s\le \frac{n^{3\xi}}{n\gamma_{t\wedge\tau_2}\eta_{*,t\wedge\tau_2}^2}.
	\end{equation}
	Here $\mathcal{C}_s$ denotes the $2\times 2$ covariance matrix of the martingale $\mathcal{E}_s$. Next, we recall that from \cite[Eqs. (A.14)--(A.16)]{gumbel} it follows that $\langle M_{12,s}^{E_+}\rangle>-\langle M_{12,s}^{E_-}E_-\rangle$, for $\eta_{1,s}\eta_{2,s}<0$ and that $\langle M_{12,s}^{E_\pm}E_\mp\rangle$ is purely imaginary.
	
	We are now ready to conclude the proof by an integral Gronwall inequality as in \cite[Lemma 5.6]{gumbel}. Define
	\begin{equation}
	\label{eq:defalphaf}
	\alpha_t:=\frac{1}{\sqrt{n\gamma_t}\eta_{*,t}},\qquad\quad f_t:=\langle M_{12,t}^{E_+}\rangle.
	\end{equation}
	Then, by \cite[Eqs. (A.14)--(A.16)]{gumbel}, it is easy to see that \eqref{eq:matrf} satisfies the hypothesis of \cite[Lemma 5.6]{gumbel}. We can thus apply a matrix Gronwall inequality from \cite[Lemma 5.6]{gumbel} and obtain \nc  $Y_{12, t\wedge\tau_2}= \lVert \mathcal{Y}_{t\wedge\tau_2}\lVert\nc\lesssim n^{3\xi/2}/(\sqrt{n\gamma_{t\wedge\tau_2}}\eta_{*,{t\wedge\tau_2}})$. We point out that in this last inequality we used
	\begin{equation}
	\label{eq:newestiso}
	\exp\left(\int_s^t f_t\, \dd r\right)\lesssim \frac{\eta_{*,s}}{\eta_{*,t}},
	\end{equation}
	which follows from $f_t\lesssim \sqrt{\rho_{1,r}\rho_{2,r}/(\eta_{1,r}\eta_{2,r})}$ and $\sqrt{\eta_{1,s}\eta_{2,s}/(\eta_{1,t}\eta_{2,t})}\sim \eta_{*,s}/\eta_{*,t}$ (for this second relation, see the similar proof \cite[Eq. (5.32)]{mesoCLT}), and
	\begin{equation}
		\label{eq:lineqbast}
		\int_0^t  \frac{1}{\sqrt{n\gamma_s}\eta_{*,s}} \cdot \frac{1}{\gamma_s}\cdot  \frac{\eta_{*,s}}{\eta_{*,t}} \, \dd s\lesssim  \frac{1}{\sqrt{n\gamma_t}\eta_{*,t}},
	\end{equation}
	where we used that $\int (1/\gamma_s)\lesssim 1$. Note that in the averaged case we used the more involved estimate \eqref{eq:propagator}, instead here we used the weaker and simply obtainable bound \eqref{eq:newestiso} which is enough for the proof of the isotropic law. We conclude the estimate of the isotropic estimate of the product of two resolvents by pointing out that a similar proof goes through for the case $b\in [0,1)$, with the difference that \eqref{eq:lineqbast} can be replaced with
	\begin{equation}
	\int_0^t  \frac{(\rho_s)^*}{\sqrt{n}\eta_{*,s}^{(3-b)/2}\gamma_s^{b/2}} \cdot \frac{1}{\gamma_s}\cdot \frac{\eta_{*,s}}{\eta_{*,t}} \, \dd s\lesssim  \frac{(\rho_t)^*}{\sqrt{n}\eta_{*,t}^{3/2-b}\gamma_t^{b/2}},
	\end{equation}
	A similar comment applies to isotropic chains of three resolvents below, even if we will not point it out explicitly. \nc

	\medskip
	
	\textbf{Estimates for \eqref{eq:3gisosubma}:} Similarly to the two resolvent case above, we notice that the second term in the rhs. of \eqref{eq:3gisosubma} can be neglected from the analysis, since it only amounts to a simple rescaling of size $e^{3t/2}\sim 1$. Several of the following estimates are similar to the estimates that we preformed in the rhs. of \eqref{eq:2gisoa}, for this reason we omit some easily checkable details.  In particular, we point out that throughout we will use properties of the characteristics from Lemma~\ref{lem:charprop} even if not stated explicitly. 
	
	We now compute the quadratic variation of the stochastic term in \eqref{eq:3gisosubma}:
	\begin{equation}
		\label{eq:quadvar3gsa}
		\begin{split}
			\dif[\widehat{\mathcal{E}}_t,\widehat{\mathcal{E}}_t]&=\frac{1}{n\eta_{1,t}^2}\bigg[ (\Im G_{1,t})_{\bm x\bm x} (G_{1,t}^*A_2G_{2,t}^*A_1\Im G_{1,t}A_1G_{2,t}A_2G_{1,t})_{\bm y\bm y} \\
			&\qquad\qquad\qquad\qquad\quad+(G_{1,t}A_1G_{2,t}A_2\Im G_{1,t}A_2G_{2,t}^*A_1G_{1,t}^*)_{\bm x\bm x} (\Im G_{1,t})_{\bm y\bm y}\bigg] \dd t\\
			&\quad+\frac{1}{n\eta_{2,t}^2} (G_{1,t}A_1\Im G_{2,t}A_1G_{1,t}^*)_{\bm x\bm x}(G_{1,t}^*A_2\Im G_{2,t}A_2G_{1,t})_{\bm y\bm y} \dd t.
		\end{split}
	\end{equation}
	Note that (the first two lines of) the quadratic variation process in \eqref{eq:quadvar3gsa} contains chains with more than three resolvents, we thus rely on the following \emph{reduction inequalities} to shorten these chains.
	\begin{lemma}[Reduction inequalities]
		For any $Q_1,Q_2,Q_3,R_1,R_2\in\C^{2n\times 2n}$ it holds
		\begin{equation}
			\begin{split}
				\label{eq:redina}
				\big|(Q_1G(w_1)Q_2G(w_2)Q_3)_{\bm x\bm y}\big|&\lesssim \sqrt{N} (Q_1|G(w_1)|Q_1^*)_{\bm x\bm x}^{1/2} (Q_3|G(w_2)|Q_3^*)_{\bm x\bm x}^{1/2}\langle |G(w_1)|Q_2|G(w_2)|Q_2^*\rangle^{1/2} \\
				\langle |G(w_1)|R_1G(w_2)R_2|G(w_1)|R_2^* G(w_2)^*R_1^*\rangle&\lesssim N\langle |G(w_1)|R_1|G(w_2)|R_1^*\rangle\langle |G(w_1)|R_2^*|G(w_2)|R_2\rangle,
			\end{split}
		\end{equation}
	\end{lemma}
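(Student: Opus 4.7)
The plan for both reduction inequalities is to insert the spectral decompositions of $G(w_1)$ and $G(w_2)$ and then apply Cauchy--Schwarz iteratively on the spectral indices. Write $G(w_j) = \sum_k (\lambda_k^{(j)}-w_j)^{-1}\psi_k^{(j)}(\psi_k^{(j)})^*$, so that $|G(w_j)|$ is the positive operator obtained by replacing each denominator by its modulus; the normalisation $\langle A\rangle = (2n)^{-1}\mathrm{Tr}(A)$ is what will produce the combinatorial $n$-factors in the final bounds. Neither inequality uses anything beyond linear algebra; both are purely deterministic.

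For the first inequality, I would expand $(Q_1 G(w_1) Q_2 G(w_2) Q_3)_{\bm x\bm y}$ in the eigenbasis of $G(w_1)$, obtaining a sum over $k$ of terms $\langle Q_1^*\bm x,\psi_k^{(1)}\rangle\,\langle\psi_k^{(1)},Q_2 G(w_2) Q_3\bm y\rangle / (\lambda_k^{(1)}-w_1)$. A first Cauchy--Schwarz in $k$ with weights $|\lambda_k^{(1)}-w_1|^{-1}$ peels off the isotropic factor $(Q_1|G(w_1)|Q_1^*)_{\bm x\bm x}^{1/2}$ and leaves $\langle Q_2 G(w_2) Q_3\bm y,\;|G(w_1)|\,Q_2 G(w_2) Q_3\bm y\rangle^{1/2}$. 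I would then expand this residual in the eigenbasis of $G(w_2)$ and apply a second Cauchy--Schwarz in $\ell$, which separates off $(Q_3^*|G(w_2)|Q_3)_{\bm y\bm y}^{1/2}$ while leaving a quadratic form $(\psi_k^{(1)})^* Q_2 |G(w_2)| Q_2^* \psi_k^{(1)}$ inside. Summing that form over $k$ against $|\lambda_k^{(1)}-w_1|^{-1}$ converts it into $\mathrm{Tr}(|G(w_1)|Q_2|G(w_2)|Q_2^*) = 2n\langle|G(w_1)|Q_2|G(w_2)|Q_2^*\rangle$, and its square root accounts for the $\sqrt{n}$ prefactor.

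For the second inequality, the cleanest route is to observe that $\langle|G(w_1)|R_1 G(w_2) R_2 |G(w_1)| R_2^* G(w_2)^* R_1^*\rangle = (2n)^{-1}\mathrm{Tr}(FF^*)$ with $F := |G(w_1)|^{1/2} R_1 G(w_2) R_2 |G(w_1)|^{1/2}$, which makes positivity of the left-hand side manifest. Expanding $FF^*$ in the joint eigenbases of $G(w_1)$ and $G(w_2)$ yields a quadruple sum over $(k_1,k_2,\ell_1,\ell_2)$ in which the $G(w_1)$-weights are already positive while the $G(w_2)$-weights $v_{\ell_1},\overline{v_{\ell_2}}$ still carry signs. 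After taking absolute values, a Cauchy--Schwarz in the pair $(\ell_1,\ell_2)$ replaces these weights by $|v_{\ell_1}|,|v_{\ell_2}|$ and factorises the sum into two pieces that I would identify with $\mathrm{Tr}(|G(w_1)|R_1|G(w_2)|R_1^*)$ and $\mathrm{Tr}(|G(w_1)|R_2^*|G(w_2)|R_2)$ respectively; converting both traces back into normalised traces supplies the overall factor of $n$.

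The only mildly delicate point is arranging the second Cauchy--Schwarz so that the indices of $R_1,R_1^*$ end up in one trace while those of $R_2,R_2^*$ end up in the other; the $FF^*$ decomposition handles this bookkeeping transparently and simultaneously dispenses with the signs of the $G(w_2)$-eigenvalues, which is the step that would otherwise require the most care.
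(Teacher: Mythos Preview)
Your argument is correct. Both inequalities follow exactly as you describe: for the first, the two nested Cauchy--Schwarz steps in $k$ and then in $\ell$ (for each fixed $k$) produce $(Q_1|G(w_1)|Q_1^*)_{\bm x\bm x}^{1/2}$, $(Q_3^*|G(w_2)|Q_3)_{\bm y\bm y}^{1/2}$ and the trace $\mathrm{Tr}(|G(w_1)|Q_2|G(w_2)|Q_2^*)=2n\langle|G(w_1)|Q_2|G(w_2)|Q_2^*\rangle$ precisely as stated; for the second, writing the left-hand side as $(2n)^{-1}\|F\|_{\mathrm{HS}}^2$ with $F=|G(w_1)|^{1/2}R_1G(w_2)R_2|G(w_1)|^{1/2}$, computing $|F_{k_1k_2}|^2$ in the $G(w_1)$-eigenbasis, and applying Cauchy--Schwarz in the single $\ell$-sum (which is what your ``Cauchy--Schwarz in $(\ell_1,\ell_2)$'' amounts to) already factorises the $k_1$- and $k_2$-sums into the two desired traces. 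One minor remark: in the second inequality a single Cauchy--Schwarz in $\ell$ suffices, you do not actually need to open the square into a double sum over $(\ell_1,\ell_2)$ first.

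As for comparison: the paper states this lemma without proof, treating it as a standard deterministic tool (such reduction inequalities appear in essentially this form in several of the cited multi-resolvent local law papers). Your spectral-decomposition-plus-Cauchy--Schwarz derivation is exactly the standard argument one has in mind, so there is nothing to contrast.
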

	Note that the reduction inequalities above involve $|G|$ as well. For this reason in the following we need to show that if a bound holds for certain products of resolvent then the same bound also holds if one (or more) $G$ is replaced by $|G|$ (see \eqref{eq:vabsval} below).

	To estimate \eqref{eq:quadvar3gsa}, for $t\le\tau_2$, we use \eqref{eq:redina} in the form
	\begin{equation}
		\label{eq:longred}
		(G_{1,t}^*A_2G_{2,t}^*A_1\Im G_{1,t}A_1G_{2,t}A_2G_{1,t})_{\bm y\bm y} \lesssim n (G_{1,t}^*|G_{2,t}|G_{1,t})_{\bm y\bm y} \langle \Im G_{1,t}|G_{2,t}|\rangle\lesssim \frac{n^{1+2\xi}}{\eta_{*,t}\gamma_t^2}.
	\end{equation}
	We point out that in the last inequality of \eqref{eq:longred} we used that for $t\le\tau_2$ we have
	\begin{equation}
		\label{eq:vabsval}
		(G_{1,t}^*|G_{2,t}|G_{1,t})_{\bm y\bm y}\lesssim \frac{n^{2\xi}\log n}{\eta_{*,t}\gamma_t}, \qquad\quad \langle \Im G_{1,t}|G_{2,t}|\rangle\lesssim \frac{\log n}{\gamma_t}.
	\end{equation}
	This follows by \cite[Appendix A]{gumbel} (see also \cite[Section 5]{Optimal_local}) and the fact that the same bound holds for the quantities with $|G|$ replaced with $G$ (see e.g. \cite[Proof of Eq. (5.32) in Appendix A]{gumbel} and \cite[Lemma 4.10]{CEHS23}). By \eqref{eq:quadvar3gsa} and the BDG inequality we thus obtain
	\begin{equation}
		\label{eq:BDG2}
		\sup_{0\le r\le t}\left|\int_0^{r\wedge\tau_2} \dif \widehat{\mathcal{E}}_s \right|\lesssim n^{3\xi/2}\sqrt{\int_0^{t\wedge\tau_2} \frac{1}{\eta_{*,s}\gamma_s^2}\left(\frac{\rho_{1,s}}{\eta_{1,s}^2}+\frac{\rho_{2,s}}{\eta_{2,s}^2}\right)\,\dd s} \lesssim \frac{n^{3\xi/2}}{\eta_{*, t\wedge\tau_2}\gamma_{t\wedge\tau_2}},
	\end{equation}
	with very high probability, where we used that the third line of \eqref{eq:quadvar3gsa}, by the definition of $\tau_2$, is bounded by $n^{4\xi}/(n\eta_{*,t}^4\gamma_t^2)\le 1/(\eta_{*,t}^3\gamma_t^2)$ (recall that $\xi\le\epsilon/10$).
	
	We now explain how to estimate the remaining terms in \eqref{eq:3gisosubma}. By spectral decomposition, Proposition~\ref{prop:aveflow}, and \eqref{eq:detbounds}, we have
	\[
	\big|\langle G_{1,t}A_1G_{2,t}A_2G_{1,t}\rangle\big|+\big|\langle M_{121,t}^{A_1,A_2} \rangle\big|\lesssim \frac{1}{\eta_{1,t}}\langle\Im G_{1,t} A_1|G_{2,t}|A_1^*\rangle^{1/2}\langle\Im G_{1,t} A_2|G_{2,t}|A_2^*\rangle^{1/2} +\big|\langle M_{121,t}^{A_1,A_2} \rangle\big|\lesssim \frac{1}{\eta_{*,t}\gamma_t},
	\]
	with very high probability.
	We can thus estimate the terms in (EI) of \eqref{eq:3gisosubma} by
	\begin{equation}
		\int_0^{t\wedge\tau_2} \frac{1}{\eta_{*,s}\gamma_s}\cdot  \frac{\rho_{1,s}}{\eta_{1,s}}\, \dd s\lesssim \frac{1}{\eta_{*, {t\wedge\tau_2}}\gamma_{t\wedge\tau_2}},
	\end{equation}
	 where we also used that $\lVert M_{11,t}^A\rVert\lesssim (\rho_{1,t}/\eta_{1,t}) \lVert A\rVert$.
	
	Then, by Proposition~\ref{prop:aveflow}, we estimate the terms in (EII) of \eqref{eq:3gisosubma} by
	\begin{equation}
		\int_0^{t\wedge\tau_2} \frac{1}{\sqrt{n\eta_{*,s}}\gamma_s}\cdot  \frac{n^{2\xi}}{\eta_{*,s}\gamma_s}\, \dd s\lesssim \frac{n^{2\xi}}{\sqrt{n\eta_{*, {t\wedge\tau_2}}}}\cdot \frac{1}{\eta_{*, {t\wedge\tau_2}}\gamma_{t\wedge\tau_2}}.
	\end{equation}
	We now consider the terms in (EIII) of \eqref{eq:3gisosubma}. For these terms, by \eqref{eq:redina}, we estimate (we just consider one of those term for concreteness)
\[
\begin{split}
\big|(G_{1,t}^2A_1G_{2,t}A_2G_{1,t})_{\bm x\bm y}\big|&\lesssim \frac{1}{\eta_{1,t}} (\Im G_{1,t})_{\bm x\bm x}^{1/2}(G_{1,t}^*A_2G_{2,t}^*A_1\Im G_{1,t}A_1G_{2,t}A_2G_{1,t})_{\bm y\bm y}^{1/2} \\
&\lesssim \frac{n^{1/2+\xi}\sqrt{\rho_{1,t}}}{\eta_{1,t}\eta_{*,t}\gamma_t},
\end{split}
\]
where in the last inequality we used \eqref{eq:longred}.
	We can thus estimate the terms in (EIII) of  \eqref{eq:3gisosubma} by
	\begin{equation}
		\label{eq:estlastlines}
		\int_0^t \frac{1}{n\eta_{*,s}}\cdot  \frac{n^{1/2+\xi}\sqrt{\rho_{1,t}}}{\eta_{1,t}\eta_{*,t}\gamma_t} \, \dd s\lesssim \frac{1}{\eta_{*,t}\gamma_t}.
	\end{equation}
	
	 From now on, without loss of generality, for simplicity we assume that $A_1,A_2\in\{E_+,E_-\}$ (see \eqref{eq:Adec}).
	
	Combining \eqref{eq:BDG2}--\eqref{eq:estlastlines} and using \eqref{eq:inassiso} to estimate the initial condition,
	from \eqref{eq:3gisosubma}, using the short--hand notation $\Upsilon_t^{\pm,\pm}:=Y_{121,t}(E_\pm,E_\pm)$, we obtain
	\begin{equation}
	\begin{split}
	\label{eq:almt3}
		\Upsilon_{t\wedge\tau}^{+,+}&=\int_0^{t\wedge\tau_2} \big[2\langle M_{12,s}^{E_+}\rangle \Upsilon_s^{+,+}-\langle M_{21,s}^{E_+}E_-\rangle \Upsilon_s^{+,-}-\langle M_{12,s}^{E_+}E_-\rangle \Upsilon_s^{-,+}\big]\, \dd s+\mathcal{O}\left(\frac{n^{3\xi/2}}{\eta_{*,t}\gamma_t}\right), \\
		\Upsilon_{t\wedge\tau}^{+,-}&=\int_0^{t\wedge\tau_2} \big[\big(\langle M_{12,s}^{E_+}\rangle-\langle M_{21,s}^{E_-}E_-\rangle\big) \Upsilon_s^{+,-}+\langle M_{21,s}^{E_-}\rangle \Upsilon_s^{+,+}-\langle M_{12,s}^{E_+}E_-\rangle \Upsilon_s^{-,-}\big]\, \dd s +\mathcal{O}\left(\frac{n^{3\xi/2}}{\eta_{*,t}\gamma_t}\right), \\
		\Upsilon_{t\wedge\tau}^{-,+}&=\int_0^{t\wedge\tau_2} \big[\big(\langle M_{21,s}^{E_+}\rangle-\langle M_{12,s}^{E_-}E_-\rangle\big) \Upsilon_s^{-,+}+\langle M_{12,s}^{E_-}\rangle \Upsilon_s^{+,+}-\langle M_{21,s}^{E_+}E_-\rangle \Upsilon_s^{-,-}\big]\, \dd s+\mathcal{O}\left(\frac{n^{3\xi/2}}{\eta_{*,t}\gamma_t}\right), \\
		\Upsilon_{t\wedge\tau}^{-,-}&=\int_0^{t\wedge\tau_2} \big[-2\langle M_{12,s}^{E_-}E_-\rangle \Upsilon_s^{-,-}+\langle M_{12,s}^{E_-}E_+\rangle \Upsilon_s^{+,-}+\langle M_{21,s}^{E_-}E_+\rangle \Upsilon_s^{-,+}\big]\, \dd s+\mathcal{O}\left(\frac{n^{3\xi/2}}{\eta_{*,t}\gamma_t}\right).
	\end{split}
	\end{equation}
	We are now ready to conclude the estimate of $Y_{121,t}$. We present only the main steps and omit some details, since this proof is analogous to the one presented in \eqref{eq:almtf}--\eqref{eq:lineqbast} and to the one in \cite[Eqs. (5.43)--(5.46) and Eqs. (5.58)--(5.59)]{gumbel}. From \eqref{eq:3gisosubma}, combining \eqref{eq:quadvar3gsa}--\eqref{eq:estlastlines}, in order to conclude the proof we can now proceed as in \eqref{eq:almtf}--\eqref{eq:lineqbast} and apply \cite[Lemma 5.6]{gumbel} to a linear system of the form
	\begin{equation}
	\label{eq:matrf2}
	\widehat{\mathcal{Y}}_{t\wedge\tau_2}=\int_0^{t\wedge\tau_2} \widehat{\mathcal{M}}_s\mathcal{Y}_s\,\dif s+\int_0^{t\wedge\tau_2} \big[\widehat{\mathcal{F}}_s\dif s+\dif\widehat{\mathcal{E}}_s\big],
	\end{equation}
	with
	\begin{equation}
	\label{eq:alhadef}
	\left(\int_0^{t\wedge\tau_2} \lVert \widehat{\mathcal{F}}_s\rVert\,\dif s\right)^2+\int_0^{t\wedge\tau_2}\lVert \widehat{\mathcal{C}}_s\rVert \,\dif s\le \frac{n^{3\xi}}{n\gamma_{t\wedge\tau_2}\eta_{*,t\wedge\tau_2}^2}=:\alpha_{t\wedge\tau_2},
	\end{equation}
	where $\widehat{\mathcal{C}}_t$ denotes the $4\times 4$ covariance matrix of the martingale $\widehat{\mathcal{E}}_t$. Here we also used the notations
\begin{equation}
\widehat{\mathcal{Y}}_t:=\left(\begin{matrix}
\Upsilon_t^{+,+} \\
\Upsilon_t^{+,-} \\
\Upsilon_t^{-,+} \\
\Upsilon_t^{-,-}
\end{matrix}\right), \qquad\quad \widehat{\mathcal{M}}_t:=\left(\begin{matrix}
2a_{+,t} & \ii b_t & -\ii b_t & 0\\
\ii b_t & a_{+,t}-a_{-,t} & 0 & -\ii b_t \\
-\ii b_t & 0 & a_{+,t}-a_{-,t} & \ii b_t \\
0 & -\ii b_t & \ii b_t & 2 a_{-,t},
\end{matrix}\right)
\end{equation}
where
\[
a_{+,t}:=\langle M_{12,t}^{E_+}\rangle, \qquad\quad a_{-,t}:= \langle M_{12,t}^{E_-}E_-\rangle,
\]
and $b_t$ is real and defined in \cite[Eq. (A.14)]{gumbel}. Note that the matrix $\widehat{\mathcal{M}}_t$ is exactly the same as the one \cite[Eq. (5.44)]{gumbel}, and that $\alpha_{t\wedge \tau_2}$ defined in \eqref{eq:alhadef} is different compared to the one defined in \eqref{eq:defalphaf}. We can thus use again the matrix Gronwall inequality from \cite[Lemma 5.6]{gumbel} for $\alpha_t$ defined in \eqref{eq:alhadef} and $f_t:=2\langle M_{12,t}^{E_+}\rangle$ (note that here we redefined $f_t$, i.e. there is an additional factor of $2$ compared to \eqref{eq:defalphaf}). By  \eqref{eq:newestiso}, we thus conclude \[
Y_{121,t\wedge\tau_2}= \lVert \mathcal{Y}_{t\wedge\tau_2}\rVert\nc\lesssim \frac{n^{3\xi/2}\log n}{\eta_{*,t\wedge\tau_2}\gamma_{t\wedge\tau_2}}.
\]
We point out that to obtain this inequality we also used
	\[
	\int_0^t \frac{1}{\eta_{*,s}\gamma_s}\cdot \frac{1}{\gamma_s}\cdot \frac{\eta_{*,s}^2}{\eta_{*,t}^2}\, \dd s\lesssim \frac{\log n}{\eta_{*,t}\gamma_t}.
	\]

	\medskip
	
	\textbf{Conclusion:} In the previous two parts we proved that
	\begin{equation}
		\label{eq:finalbounds}
		Y_{12, t\wedge\tau_2}\lesssim \frac{n^\xi}{\sqrt{n\gamma_{t\wedge\tau_2}}\eta_{*,{t\wedge\tau_2}}}, \qquad\quad Y_{121,t\wedge\tau_2}\lesssim \frac{n^{3\xi/2}\log n}{\eta_{*,t\wedge\tau_2}\gamma_{t\wedge\tau_2}}.
	\end{equation}
	These two bounds together prove the desired estimate for the initial conditions $\eta_{i,0}$. A completely analogous proof holds if we consider any other initial condition $\widetilde{\eta}_{i,0}\in [\eta_{i,0},\omega_*]$, obtaining exactly the same bounds \eqref{eq:finalbounds} (with tilde on all quantities, i.e. $\gamma\to \widetilde{\gamma}$, etc...). This, together with the definition of $\tau_2$ in \eqref{eq:deftau2} shows that $\tau_2=T_1^\epsilon\wedge T_2^\epsilon$ with very high probability.
	
\end{proof}

\subsection{Improved local law at the edge of the spectrum}\label{sec:edge}

In this section we present the proof of the averaged local law at the edge of the spectrum, including $\Im G$'s. While in the bulk of the spectrum there is no difference between $G$ and $\Im G$, we will see that close to the edge there will be an improvement (proportional to the local density) if we replace one (or more) $G$ by $\Im G$. For brevity, we present this proof only in the averaged case, since this is what gives the optimal bound on the eigenvector overlaps in Theorem~\ref{coro_edge}.  The result from Proposition~\ref{prop:aveflowim} below is the main technical input to prove Theorem~\ref{thm:2G_edge} (see the beginning of Section~\ref{sec:zig} for a related explanation).

\begin{proposition}
	\label{prop:aveflowim}
	Fix small $n$--independent constants $\tau,\omega_*,\epsilon>0$, and for $i=1,2$ fix spectral parameters $\Lambda_{i,0}$ as in \eqref{eq:defincond}, with $|z_{i,0}|\le 10$, $|\eta_{i,0}|\le \omega_*$. For $t\le T^*$ let $\Lambda_{i,t}$ be the solution of \eqref{eq:charflowmat} with initial condition $\Lambda_{i,0}$. Let $G_{i,t}:=(W_t-\Lambda_{i,t})^{-1}$, and let $\widehat{M}_{12,t}^A$ be the deterministic approximation of $\Im G_{1,t}A\Im G_{2,t}$ from \eqref{avlaw12_im}. Let $\ell$ from \eqref{eq:defcontrolparam}, and define the control parameter
	\begin{equation}
		\label{eq:defcontrolparama}
		\widehat{\gamma}=\widehat{\gamma}(\eta_1,z_1,\eta_2,z_2):=|z_1-z_2|^2+\rho_1|\eta_1|+\rho_2|\eta_2|+\left(\frac{\eta_1}{\rho_1}\right)^2+\left(\frac{\eta_2}{\rho_2}\right)^2, \qquad\quad \widehat{\gamma}_t:=\widehat{\gamma}(\eta_{1,t},z_{1,t},\eta_{2,t},z_{2,t}).
	\end{equation}
	Assume that for some small $0<\xi\le \epsilon/10$, with very high probability, it holds
	\begin{equation}
		\big|\big\langle\big(\Im G_{1,0}A_1\Im G_{2,0}-\widehat{M}_{12,0}^{A_1}\big)A_2\big\rangle\big|\le \frac{n^\xi\rho_{1,0}\rho_{2,0}}{\sqrt{n\ell_0}\widehat{\gamma}_0}\wedge \frac{n^\xi}{n|\eta_{1,0}\eta_{2,0}|}
	\end{equation}
	uniformly in $\lVert A_i\rVert\le 1$ and for any $|\eta_i|\in [|\eta_{i,0}|,n^{100}]$. Then,
	\begin{equation}
		\big|\langle (\Im G_{1,t}A_1\Im G_{2,t}-\widehat{M}_{12,t}^{A_1})A_2\rangle\big|\le \frac{n^{2\xi}\rho_{1,t}\rho_{2,t}}{\sqrt{n\ell_t}\widehat{\gamma}_t}\wedge \frac{n^{2\xi}}{n|\eta_{1,t}\eta_{2,t}|},
	\end{equation}
	with very high probability uniformly in $\lVert A_i\rVert\le 1$ and all $t\le T^*$ such that $n\ell_t\ge n^\epsilon$.
\end{proposition}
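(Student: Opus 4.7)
The plan is to follow the same zig-flow strategy as in Proposition~\ref{prop:aveflow}, but working directly with $\Im G_{i,t}$ rather than with $G_{i,t}$ alone, so as to extract the additional $\rho_{1,t}\rho_{2,t}$ factor that distinguishes $\widehat{\gamma}_t$ from $\gamma_t$ in the edge regime. The starting point is It\^o's formula for $\langle (\Im G_{1,t} A_1 \Im G_{2,t} - \widehat M_{12,t}^{A_1}) A_2\rangle$ along the flow~\eqref{eq:OUmat}. Writing $\Im G = (G-G^*)/(2\ii)$, this SDE is a linear combination of four two-resolvent SDEs of the type~\eqref{eq:fulleqaasimp}, one for each sign pair $(\pm \eta_1,\pm \eta_2)$; the coefficients are chosen so that the stability operator governing the drift is the same $\mathcal{B}_{12}$, and its dangerous directions are still $E_{\pm}$, exactly as in the reduction leading to~\eqref{eq:critical}.

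I would then set up a stopping-time bootstrap by defining
\[
\widehat{Y}_t := \max_{B_1,B_2 \in \{A_1,A_2,E_+,E_-\}}\big|\langle (\Im G_{1,t} B_1 \Im G_{2,t} - \widehat M_{12,t}^{B_1}) B_2\rangle\big|, \qquad \widehat{\alpha}_t := \frac{\rho_{1,t}\rho_{2,t}}{\sqrt{n\ell_t}\widehat{\gamma}_t} \wedge \frac{1}{n|\eta_{1,t}\eta_{2,t}|},
\]
and $\widehat{\tau} := \inf\{t \ge 0 : \widehat{Y}_t \ge n^{2\xi}\widehat{\alpha}_t\} \wedge T^*$, and aim to prove that $\widehat{\tau} = T^*$ with very high probability by integrating the SDE up to $t \wedge \widehat{\tau}$. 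The three families of terms arising from It\^o are then controlled as follows. The martingale part has quadratic variation involving four-resolvent traces such as $\langle \Im G_1 A_1 \Im G_2 A_2^*\Im G_2 A_2 \Im G_1 A_1^*\rangle/(n\eta_1\eta_2)$; by the reduction inequalities~\eqref{eq:redina} together with the deterministic bound~\eqref{Im_M_bound} and the isotropic three-resolvent bound of Proposition~\ref{prop:isoflow}, each such trace carries the full $\rho_{1,t}\rho_{2,t}/\widehat{\gamma}_t$ factor, so that BDG followed by the time integral~\eqref{eq:explint} produces exactly the target size $\widehat{\alpha}_t$. The drift collapses, via the decomposition~\eqref{eq:12topm}, to a $2{\times}2$ linear system in $(\widehat{Y}_t(E_+),\widehat{Y}_t(E_-))$ whose coefficients are traces $\langle \widehat M_{12,t}^{E_{\pm}} E_{\pm}\rangle$, each of which is already bounded by $\rho_{1,t}\rho_{2,t}/\widehat{\gamma}_t$. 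The forcing terms generated by $\langle G_i - m_i\rangle$ factors are handled as for~\eqref{eq:critical} and are negligible compared to $\widehat{\alpha}_t$.

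A matrix Gr\"onwall inequality of the same structure as~\eqref{eq:propagator} (the propagator does not change between $M$ and $\widehat M$) then closes the bootstrap, once one verifies the analogue of~\eqref{eq:impfinalineq}, namely $\int_0^t \widehat{\alpha}_s \widehat{\gamma}_s^{-1} (\widehat{\gamma}_s/\widehat{\gamma}_t)^2\,\dd s \lesssim \widehat{\alpha}_t$, which reduces to an explicit ODE computation thanks to the relations~\eqref{eq:easyrelchar} and~\eqref{eq:sizez1z2}. In the complementary regime $|z_{1,t} - z_{2,t}| \ge n^{-\delta}$ one invokes a global two-resolvent law as in the proof of Proposition~\ref{prop:aveflow}, so throughout we may assume $|z_{1,t} - z_{2,t}| \le n^{-\delta}$.

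The main obstacle is to propagate the $\rho_{1,t}\rho_{2,t}$ cancellation encoded in $\widehat M_{12,t}^{A_1}$ through \emph{every} multi-resolvent chain produced by It\^o. Concretely, wherever the proof of Proposition~\ref{prop:aveflow} bounded a deterministic coefficient by $\|M_{12,t}^A\|\lesssim \gamma_t^{-1}$, here one must upgrade it to $|\langle \widehat M_{12,t}^{A} A'\rangle|\lesssim \rho_{1,t}\rho_{2,t}/\widehat{\gamma}_t$ for the relevant test matrices $A'\in\{E_+,E_-\}$, and show that the signs appearing in the fourfold decomposition~\eqref{Im_M_def} conspire to preserve this cancellation also in the quadratic-variation bounds. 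This is precisely the step that restricts the statement to averaged chains (the analogous isotropic cancellation is not available) and it requires extending the $M$-analysis of Lemma~\ref{lemma_M} to each averaged chain appearing in the Itô expansion along the flow.
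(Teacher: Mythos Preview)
Your high-level plan is correct, but there is a genuine gap in how you propose to close the bootstrap. When you expand $\Im G=(G-G^*)/(2\ii)$ and take the fourfold linear combination of~\eqref{eq:fulleqaasimp}, the resulting drift does \emph{not} produce a self-contained system in $\widehat Y_t$ alone. Concretely, the terms $2\sum_{i\ne j}\langle (G_{1,t}A_1G_{2,t}-M_{12,t}^{A_1})E_i\rangle\langle M_{21,t}^{A_2}E_j\rangle$ from the four copies do not all recombine into $\Im G\,\Im G$ fluctuations: you necessarily generate mixed terms such as
\[
\langle (G_{1,t}^*A_1G_{2,t}-M_{\overline{1}2,t}^{A_1})E_i\rangle\langle \widehat M_{21,t}^{A_2}E_j\rangle
\quad\text{and}\quad
\langle \widehat M_{12,t}^{A_1}E_i\rangle\langle (G_{2,t}^*A_2G_{1,t}-M_{\overline{2}1,t}^{A_2})E_j\rangle,
\]
where the fluctuation factor is a plain $G^*AG$ quantity, not an $\Im G\,A\,\Im G$ quantity. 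These are not captured by your stopping time on $\widehat Y_t$, and they cannot be recovered from $\widehat Y_t$ by any algebraic identity. In the paper's argument they are controlled as \emph{forcing terms} using the already-proven Proposition~\ref{prop:aveflow} (the $G_1AG_2$ local law without $\Im$), whose $1/(\sqrt{n\ell_s}\gamma_s)$ bound combined with $|\langle \widehat M_{21,s}^{A_2}E_j\rangle|\lesssim \rho_{1,s}\rho_{2,s}/\widehat\gamma_s$ integrates to the target $\widehat\alpha_t$. Your proposal never invokes this external input, so the system you write down does not close.

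Two further points where the paper's argument differs from yours. First, the coefficient multiplying the $\Im G\,\Im G$ fluctuation in the linear term is $\langle M_{\overline{2}1,t}^{A_2}E_j\rangle$ (an ordinary $M$, summed over the four sign pairs), not $\langle \widehat M_{21,t}^{A_2}E_j\rangle$; the Gronwall propagator is thus $\exp\big(\tfrac12\int_s^t[\langle M_{12,r}^I\rangle+\langle M_{\overline{1}\overline{2},r}^I\rangle+\langle M_{\overline{1}2,r}^I\rangle+\langle M_{1\overline{2},r}^I\rangle]\,\dd r\big)$, which happens to be bounded by $\widehat\gamma_s^2/\widehat\gamma_t^2$ via~\eqref{eq:propagator} and $\gamma\sim\widehat\gamma$. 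Second, the chiral symmetry of $H^z$ gives $\langle\Im G_1E_-\Im G_2E_-\rangle=\langle\Im G_1\Im G_2\rangle$ and $\langle\widehat M_{12,t}^{E_\pm}E_\mp\rangle=0$, so the $2{\times}2$ system you anticipate actually decouples into a \emph{scalar} Gronwall for $(A_1,A_2)=(E_+,E_+)$, after which $(E_+,E_-)$ follows immediately. This is a simplification you are missing, though not a fatal one.
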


\begin{proof}

	To keep the presentation simple, we only consider the case $A_1,A_2\in \{E_+,E_-\}$. By \eqref{eq:fulleqaasimp} we have
	\begin{equation}
		\label{eq:imflow}
		\begin{split}
			&\dd \langle (\Im G_{1,t}A_1\Im G_{2,t}-M_{12,t}^{A_1})A_2\rangle \\
			&=\sum_{a,b=1}^{2n}\partial_{ab}\langle \Im G_{1,t}A_1 \Im G_{2,t}A_2\rangle\frac{\dd B_{ab,t}}{\sqrt{n}}+\langle (\Im G_{1,t}A_1 \Im G_{2,t}-\widehat{M}_{12,t}^{A_1})A_2\rangle\dd t \\
			&\quad+(LT)+(EI)+(EII)+(EIII),
		\end{split}
	\end{equation}
	where we defined
	\begin{equation}
		\begin{split}
			(LT):&=2\sum_{i\ne j}\langle (G_{1,t}^*A_1G_{2,t}-M_{\overline{1}2,t}^{A_1})E_i\rangle\langle \widehat{M}_{21,t}^{A_2}E_j\rangle\dd t \\
			&\quad+2\sum_{i\ne j}\langle (\Im G_{1,t} A_1 \Im G_{2,t}-\widehat{M}_{12,t}^{A_1})E_i\rangle\langle M_{\overline{2}1,t}^{A_2}E_j\rangle\dd t \\
			&\quad-4\sum_{i\ne j}\langle \widehat{M}_{12,t}^{A_1}E_i\rangle\langle (\Im G_{2,t}A_2\Im G_{1,t}-\widehat{M}_{21,t}^{A_2}\rangle)E_j\rangle \dd t \\
			&\quad-4\sum_{i\ne j}\langle (\Im G_{1,t}A_1\Im G_{2,t}-\widehat{M}_{12,t}^{A_1}\rangle)E_i\rangle\langle \widehat{M}_{21,t}^{A_2}E_j\rangle \dd t \\
			(EI):&= 2\sum_{i\ne j}\langle M_{\overline{1}2,t}^{A_1} E_i\rangle\langle (\Im G_{2,t}A_2\Im G_{1,t}-\widehat{M}_{21,t}^{A_2})E_j\rangle\dd t \\
			&\quad+2\sum_{i\ne j}\langle (\Im G_{1,t}A_1\Im G_{2,t}-\widehat{M}_{12,t}^{A_1})E_i\rangle\langle (G_{2,t}^*A_2 G_{1,t}-M_{\overline{2}1,t}^{A_2})E_j\rangle\dd t \\
			(EII):&=2\sum_{i\ne j}\langle (G_{1,t}^*A_1G_{2,t}-M_{\overline{1}2,t}^{A_1})E_i\rangle \langle (\Im G_{2,t}A_2\Im G_{1,t}-\widehat{M}_{21,t}^{A_2})E_j\rangle \dd t \\
			&\quad+2\sum_{i\ne j}\langle \widehat{M}_{12,t}^{A_1} E_i\rangle\langle (G_{2,t}^*A_2 G_{1,t}-\widehat{M}_{\overline{2}1,t}^{A_2})E_j\rangle\dd t \\
			&\quad-4\sum_{i\ne j}\langle (\Im G_{1,t}A_1\Im G_{2,t}-\widehat{M}_{12,t}^{A_1}\rangle)E_i\rangle\langle (\Im G_{2,t}A_2\Im G_{1,t}-\widehat{M}_{21,t}^{A_2}\rangle)E_j\rangle \dd t \\
			(EIII):&= \langle (G_{1,t}-M_{1,t})\rangle\langle \Im G_{1,t} A_1\Im G_{2,t}A_2G_{1,t}\rangle\dd t + \langle (G_{2,t}-M_{2,t})\rangle\langle \Im G_{2,t} A_2\Im G_{1,t}A_1G_{2,t}\rangle\dd t \\
			&\quad+ \langle (G_{1,t}-M_{1,t})^*\rangle\langle \Im G_{1,t} A_1\Im G_{2,t}A_2 G_{1,t}^*\rangle\dd t + \langle (G_{2,t}-M_{2,t})^*\rangle\langle \Im G_{2,t}A_2 \Im G_{1,t}A_1G_{2,t}^*\rangle\dd t \\
			&\quad+\left(\frac{\langle \Im G_{1,t}- \Im M_{1,t}\rangle}{\eta_{1,t}}+\frac{\langle \Im G_{2,t}-\Im M_{2,t}\rangle}{\eta_{2,t}}\right)\langle \Im G_{1,t}A_1\Im G_{2,t}A_2\rangle .
		\end{split}
	\end{equation}
	We point out that here we used that by the chiral symmetry of $H^z$ for any diagonal $B_1,B_2$ we have $\langle G_1B_1\Im G_2B_2\rangle=\ii\langle\Im G_1B_1\Im G_2B_2\rangle$.
	
	We now present the estimate of all the various terms in the rhs. of \eqref{eq:imflow}. These estimates are very similar to the ones presented in \cite[Section 5.1]{gumbel}, hence we omit some details. Throughout the proof we use that $\widehat{\gamma}_s\gtrsim \widehat{\gamma}_t$, $\gamma_s\gtrsim \gamma_t$, for $s\le t$, and \eqref{eq:explint} even if not mentioned explicitly. For simplicity of notation we also assume that $\eta_{i,t}>0$.
	
	Define
	the stopping time
	\begin{equation}
		\label{eq:deftau3}
		\tau_3:=\inf\left\{t\ge 0: \max_{A_1,A_2\in\{E_+,E_-\}} \big|\langle (\Im G_{1,t}A_1\Im G_{2,t}-\widehat{M}_{12,t}^{A_1})A_2\rangle\big|=n^{2\xi} \widehat{\alpha}_t\right\}\wedge T^*, \qquad  \widehat{\alpha}_t:= \frac{1}{n|\eta_{1,t}\eta_{2,t}|}\wedge \frac{\rho_{1,t}\rho_{2,t}}{\sqrt{n\ell_t}\widehat{\gamma}_t}.
	\end{equation}
	Recall (see \eqref{Im_M_bound} of Lemma \ref{lemma_M}) that for the deterministic term we have the bound
	\begin{equation}
		\label{eq:hatMbound}
		\lVert \widehat{M}_{12,t}^A\rVert\lesssim \frac{1}{\widehat{\gamma}_t}.
	\end{equation}
	
	We start with the estimate of the stochastic term in \eqref{eq:imflow}. Its quadratic variation process is bounded by
	\begin{equation}
		\frac{1}{n^2\eta_{*,t}^2}\langle \Im G_{1,t}A_1\Im G_{2,t}A_2\Im G_{1,t}A_2\Im G_{2,t}A_1\rangle\dd t\lesssim \left(\frac{\rho_{1,t}}{n^2\eta_{1,t}^3\eta_{2,t}^2}\wedge \frac{\rho_{1,t}^2\rho_{2,t}^2}{n\eta_{*,t}^2\widehat{\gamma}_t^2}\right)\,\dd t,
	\end{equation}
	where to obtain the second term in the rhs. we used the averaged reduction inequality \eqref{eq:redina} to estimate the trace of the product of four $G$'s in terms of the product of two traces involving two $G$'s each. To estimate the traces involving two resolvents we then used \eqref{eq:hatMbound} and the definition of the stopping time $\tau_3$ in \eqref{eq:deftau3}. By the BDG inequality we thus obtain
	\begin{equation}
		\label{eq:BDG3}
		\sup_{0\le r\le t}\left|\int_0^{r\wedge\tau_3} \sum_{a,b=1}^{2n}\partial_{ab}\langle \Im G_{1,s}A_1 \Im G_{2,s}A_2\rangle\frac{\dd B_{ab,s}}{\sqrt{n}}\right|\lesssim n^\xi \widehat{\alpha}_{t\wedge\tau_3}.
	\end{equation}
	
	Notice that the second term in the second line of \eqref{eq:imflow} can be neglected, since it just amounts to a simple rescaling of a factor $e^t\sim 1$. Next, using that by spectral decomposition we have
	\begin{equation}
		\big|\langle \Im G_{1,t} A_1\Im G_{2,t}A_2G_{1,t}\rangle\big|\lesssim \frac{1}{\eta_{1,t}}\langle \Im G_{1,t} A_1\Im G_{2,t}A_1^*\rangle^{1/2}\langle \Im G_{1,t} A_2\Im G_{2,t}A_2^*\rangle^{1/2},
	\end{equation}
	and using the single resolvent local law $|\langle G_{i,t}-M_{i,t}\rangle|\le n^\xi/(n\eta_{i,t})$, we estimate the term in $(EIII)$ by
	\begin{equation}
		\int_0^{t\wedge\tau_3}\frac{n^\xi}{n\eta_{*,s}^2} \frac{\rho_{1,s}\rho_{2,s}}{\widehat{\gamma_s}}\, \dd s\lesssim \frac{n^\xi}{n\ell_{t\wedge\tau_3}} \cdot \frac{\rho_{1,t\wedge\tau_3}\rho_{2,t\wedge\tau_3}}{\widehat{\gamma_t}}\le \widehat{\alpha}_{t\wedge\tau_3},
	\end{equation}
	where in the last inequality we also used that $\xi\le\epsilon/10$.
	
	Next, using the definition of the stopping time $\tau_3$ in \eqref{eq:deftau3} and the local law
	\begin{equation}
		\label{eq:noimllaw}
		\big|\langle \big(G_{1,t}A_1G_{2,t}-M_{12,t}^{A_1}\big)A_2\rangle\big|\lesssim \frac{n^\xi}{n\eta_{1,t}\eta_{2,t}}\wedge \frac{n^\xi}{\sqrt{n\ell_t}\gamma_t},
	\end{equation}
	we estimate the terms in $(EII)$ by
	\begin{equation}
		\int_0^{t\wedge\tau_3} \left(\frac{n^\xi}{n\eta_{1,s}\eta_{2,s}}\wedge \frac{n^\xi}{\sqrt{n\ell_s}\gamma_s}\right) \left(\frac{n^\xi}{n\eta_{1,s}\eta_{2,s}}\wedge \frac{n^\xi \rho_{1,s}\rho_{2,s}}{\sqrt{n\ell_s}\widehat{\gamma}_s}\right)\, \dd s\lesssim \frac{n^{2\xi}}{n\ell_{t\wedge\tau_3}}\widehat{\alpha}_{t\wedge\tau_3}.
	\end{equation}
	Furthermore, using \eqref{eq:hatMbound} and \eqref{eq:noimllaw}, the terms in $(EI)$ are estimated by
	\begin{equation}
		\label{eq:g1g2terms}
		\int_0^t \frac{\rho_{1,s}\rho_{2,s}}{\widehat{\gamma}_s} \cdot\left(\frac{n^\xi}{n\eta_{1,s}\eta_{2,s}}\wedge \frac{n^\xi}{\sqrt{n\ell_s}\gamma_s}\right)\, \dd s\lesssim  n^\xi \widehat{\alpha}_t.
	\end{equation}
	
	We are thus left only with the linear terms $(LT)$ in the rhs. of \eqref{eq:imflow}. These terms will be Gronwalled, giving the desired bound. This last step is the only place where there is a relevant difference compared to \cite[Section 5]{gumbel}. In fact, in \cite{gumbel} when we considered $G_1G_2$, to deal with the two bad directions for the stability operator $E_-,E_+$, we studied a $4\times 4$ system of equations, corresponding to the four choices $A_1,A_2\in \{E_+,E_-\}$. Instead, for $\Im G_1\Im G_2$, due to additional symmetries of $\Im G$ (see \eqref{eq:symmetry}) it is enough to study a scalar equation. Indeed, by the chiral symmetry of $H^z$ we have
	\begin{equation}
		\label{eq:symmetry}
		\langle \Im G_{1,t}E_-\Im G_{2,t}E_-\rangle=\langle \Im G_{1,t} \Im G_{2,t}\rangle, \quad \langle \Im G_{1,t}E_-\Im G_{2,t}\rangle=\langle \Im G_{1,t} \Im G_{2,t}E_-\rangle, \quad \langle\widehat{M}_{12,t}^{E_\pm}E_\mp\rangle=0.
	\end{equation}
	This shows that it is enough to only consider two cases: $(A_1,A_2)=(E_+,E_+)$ and $(A_1,A_2)=(E_+,E_-)$. We first consider the case $(A_1,A_2)=(E_+,E_+)$. By inspecting \eqref{eq:imflow}, one can easily see that the equation for $(A_1,A_2)=(E_+,E_+)$ is self--contained; in fact, the terms containing $(E_+,E_-)$, which naturally emerge in the rhs. of \eqref{eq:imflow}, cancel algebraically. More precisely, defining
	\[
	\widehat{Y}_t:=\big|\langle (\Im G_{1,t}E_+\Im G_{2,t}-\widehat{M}_{12,t}^{E_+})E_+\rangle\big|,
	\]
	and combining \eqref{eq:BDG3}--\eqref{eq:g1g2terms}, we obtain
	\begin{equation}
		\widehat{Y}_{t\wedge\tau_3}=\widehat{Y}_0+\frac{1}{2}\int_0^t \big[\langle M_{12,s}^I\rangle+\langle M_{\overline{1}\overline{2},s}^I\rangle+\langle M_{\overline{1}2,s}^I\rangle+\langle M_{1\overline{2},s}^I\rangle\big] \widehat{Y}_s\,\dd s+ n^\xi\widehat{\alpha}_{t\wedge\tau_3}.
	\end{equation}
	By a Gronwall inequality, using that by explicit computations (see e.g. \cite[Lemma 5.7]{gumbel})
	\begin{equation}
		\exp\left(\frac{1}{2}\int_s^t \big[\langle M_{12,r}^I\rangle+\langle M_{\overline{1}\overline{2},r}^I\rangle+\langle M_{\overline{1}2,r}^I\rangle+\langle M_{1\overline{2},r}^I\rangle\big]\, \dd r\right)\le \exp\left(\int_s^t \big|\langle M_{12,r}^I\rangle\big|\,\dd r\right) \lesssim \frac{\gamma_s^2}{\gamma_t^2}\sim  \frac{\widehat{\gamma}_s^2}{\widehat{\gamma}_t^2},
	\end{equation}
	and proceeding similar arguments as in the end of \cite[Secion 5.1]{gumbel} starting from Eq. (5.58) therein, we obtain $\widehat{Y}_{t\wedge\tau_3}\le n^\xi\widehat{\alpha}_{t\wedge\tau_3}$. This concludes the proof for $(A_1,A_2)=(E_+,E_+)$. Given the result for $(A_1,A_2)=(E_+,E_+)$, the estimate in the case  $(A_1,A_2)=(E_+,E_-)$ is completely analogous obtaining the bound $n^\xi\widehat{\alpha}_{t\wedge\tau_3}$ in this case as well. This concludes the proof showing that $\tau_3=T^*$ with very high probability.
	
\end{proof}

\section{The zag step: Proofs of Theorem \ref{thm:2G} and Theorem \ref{thm:2G_edge}}
\label{sec:zag}

In Section \ref{sec:zig} (zig step), we proved in Proposition \ref{prop:aveflow} that if the local laws from Theorem~\ref{thm:2G} hold on a global scale $\eta\sim 1$, then they also hold down to the optimal local scale $\eta\gg 1/n$, at the cost of adding a Gaussian component. This section (zag step) is devoted to remove this added Gaussian component using a Gronwall argument (see Lemma \ref{lemma_RS_gft} and (\ref{gft_RR}) below) and hence prove Theorem \ref{thm:2G}. The proof of Theorem \ref{thm:2G_edge} near the edge is similar to Theorem \ref{thm:2G} using (\ref{gft_im}) below to remove the Gaussian component added in Proposition \ref{prop:aveflowim}.

The outline of this section is as follows. We first present the proof of the isotropic local law (\ref{enlaw12}) and the averaged law in Section \ref{sec:proof_2G_iso} and \ref{sec:2G_ave} respectively. In Sections \ref{sec:nogain}, we prove standard local laws without $|z_1-z_2|$ decay stated in Proposition \ref{prop_initial} below, which serve as an input to prove Theorem \ref{thm:2G}. The proof of Theorem \ref{thm:2G_edge} is presented in Section \ref{sec:im} using Theorem \ref{thm:2G_edge} as an input.

\medskip

Before we prove Theorem \ref{thm:2G}, we introduce the local laws for $G^{z_1}(\ii \eta_1) A_1 G_{2}^{z_2}(\ii \eta_2)$ without gaining from the $|z_1-z_2|$-decorrelations. The proof of these local laws is standard and thus postponed to Section~\ref{sec:nogain}.

\begin{proposition}\label{prop_initial}
	Fix small constants $\epsilon,\tau>0$. The following hold
	\begin{align}
		&\Big| \big(G^{z_1}(\ii \eta_1) A_1 G^{z_2}(\ii \eta_2)-M_{12}^{A_1}\big)_{\xx \yy}\Big| \prec \frac{\sqrt{\rho^*}}{\sqrt{n} \eta_*^{3/2}}, \label{initial_en}\\
		&\Big|\big\langle \big(G^{z_1}(\ii \eta_1) A_1 G^{z_2}(\ii \eta_2)-M_{12}^{A_1}\big)A_2\big\rangle\Big|\prec \frac{1}{n\eta_*^2},\label{initial_ave}
	\end{align}
	uniformly for any $|z_i| \leq 1+\tau$, $\min_{i=1}^2 \{|\eta_i| \rho_i \} \geq n^{-1+\epsilon}$, any deterministic matrices $\|A_1\|+\|A_2\|\lesssim 1$ and unit vectors $\xx, \yy$. Here $\rho_i : =\rho^{z_i}(\ii \eta_i)$,
	 $\eta_*:=\min_{i=1}^2|\eta_i|$, $\rho^*:=\max_{i=1}^2\rho_i$, and $M_{12}^{A_1}$ is given in (\ref{eq:defM12}). 
	 Moreover, recalling $M_{121}^{A_1,A_2}$ defined in (\ref{eq:defM121}), we also have the following estimate for three resolvents
	 \begin{align}
	 \Big|\big(G^{z_1}(\ii \eta_1)A_1G^{z_2}(\ii \eta_2)A_2G^{z_1}(\ii \eta_1)-M_{121}^{A_1,A_2}\big)_{\xx \yy}\Big| \prec \frac{1}{\eta_*^2}.\label{initial_3G}
	 \end{align}\nc
\end{proposition}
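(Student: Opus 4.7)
My approach to these non-optimal bounds is a direct moment method: for each quantity $\Psi$ in \eqref{initial_en}, \eqref{initial_ave}, \eqref{initial_3G}, I would control $\E |\Psi|^{2p}$ for large even $2p$ via cumulant expansion against the entries of $X$. Since we accept losses in the $|z_1-z_2|$-decorrelation, the resulting self-consistent system can be closed with the crude operator-norm estimate $\|\mathcal{B}_{12}^{-1}\| \lesssim \eta_*^{-1}$ (in place of the delicate $\gamma$-dependent analysis of Lemma \ref{lemma_M}) combined with the single-resolvent local law (Theorem \ref{local_thm}) and the Ward identity $G^z (G^z)^\ast = \Im G^z / |\eta|$ to collapse longer resolvent chains.

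Concretely, for the averaged bound \eqref{initial_ave} with $\Psi = \langle (G^{z_1}(\ii\eta_1) A_1 G^{z_2}(\ii\eta_2) - M_{12}^{A_1}) A_2 \rangle$, write $\E|\Psi|^{2p}$ and integrate by parts against the entries of $X$ via cumulant expansion. The leading contribution produces the stability operator $\mathcal{B}_{12}$ acting on $\Psi$, while the subleading contributions are products of resolvent traces controlled by Theorem \ref{local_thm} at size $(n\eta_*)^{-1}$; inverting $\mathcal{B}_{12}$ crudely then yields the claimed $(n\eta_*^2)^{-1}$. The isotropic two-resolvent bound \eqref{initial_en} follows by applying the same strategy to $\E|\xx^\ast (G_1 A_1 G_2 - M_{12}^{A_1}) \yy|^{2p}$; the factor $\sqrt{\rho^*}$ emerges from the isotropic single-resolvent estimate $(\Im G^z)_{\xx\xx} \lesssim \rho^z + n^\xi \sqrt{\rho^z/(n|\eta|)}$ applied via Cauchy-Schwarz to terms of the form $(G_1 A_1 \Im G_2 A_1^\ast G_1^\ast)_{\xx\xx}$ that appear naturally after one Ward reduction.

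For the three-resolvent isotropic bound \eqref{initial_3G}, the same cumulant/Ward scheme applied to the longer chain produces an additional layer of reductions; combining this with the deterministic size $\|M_{121}^{A_1,A_2}\| \lesssim (\eta_* \gamma)^{-1}$ from Lemma \ref{lemma_M}, used here only at the crude $\gamma \gtrsim \eta_*$ level, yields the claimed $\eta_*^{-2}$. The main technical obstacle is the bookkeeping of the cumulant expansion for the three-resolvent chain: one must carefully track which error chains produced by higher-order cumulants can be reduced back by Ward identities to controlled single-resolvent averages and which must be absorbed into the self-consistent closure. Since we do not attempt to extract any $|z_1-z_2|$-gain, this bookkeeping is routine, and the overall argument is substantially simpler than the main zigzag proofs in Sections \ref{sec:zig} and \ref{sec:zag}.
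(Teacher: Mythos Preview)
Your static cumulant-expansion approach is viable in principle, but it differs from the paper's route on all three parts, and for \eqref{initial_3G} you are working much harder than necessary.

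For the averaged law \eqref{initial_ave} the paper simply cites \cite[Theorem~5.2]{complex_CLT}, whose proof is indeed along the lines you sketch. For the three-resolvent bound \eqref{initial_3G}, however, no cumulant expansion or self-consistent closure is needed at all: since the target $\eta_*^{-2}$ is already the size of each term separately, a single Cauchy--Schwarz plus Ward step gives
\[
\big|(G_1 A_1 G_2 A_2 G_1)_{\xx\yy}\big| \le \sqrt{\frac{(G_1 A_1 \Im G_2 A_1^* G_1^*)_{\xx\xx}\, (\Im G_1)_{\yy\yy}}{|\eta_1\eta_2|}} \lesssim \frac{1}{\eta_*^2},
\]
while $\|M_{121}^{A_1,A_2}\| \lesssim (\eta_*\gamma)^{-1} \lesssim \eta_*^{-2}$ by \eqref{M_3_bound} and the crude bound $\gamma \gtrsim \eta_*$; the triangle inequality finishes. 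Your proposed ``additional layer of reductions'' and bookkeeping of higher-cumulant error chains is entirely avoidable here.

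For the isotropic two-resolvent law \eqref{initial_en} the paper does not run a static expansion either; it recycles the zigzag framework of Sections~\ref{sec:zig}--\ref{sec:zag} at the crudest level $b=0$. The global law \eqref{eq:englobal} serves as initial input, Proposition~\ref{prop:isoflow} with $b=0$ propagates it down along the characteristic flow, and a simplified version of Lemma~\ref{lemma_RS_gft} (the differentiation inequality \eqref{gft_xy} with target $\mathcal{E}_0 = \sqrt{\rho^*}/(\sqrt{n}\eta_*^{3/2})$) supplies the Gronwall-based zag step. The cumulant computations you describe do appear, but only as bounds on $\tfrac{\dd}{\dd t}\E|R_t^{xy}|^{2m}$ along the OU flow, so the argument never solves a self-consistent equation or inverts $\mathcal{B}_{12}$. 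Your static route would also reach \eqref{initial_en}, but the paper's choice lets it reuse the exact machinery already built for the main theorems rather than setting up a separate closure.
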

Next we aim to improve Proposition \ref{prop_initial} to Theorem \ref{thm:2G} gaining from the $|z_1-z_2|$-decorrelation.

\subsection{Proof of the isotropic local law in  (\ref{enlaw12})}\label{sec:proof_2G_iso}

We start with proving the isotropic local law in (\ref{enlaw12}), using Proposition \ref{prop_iteration} below iteratively.
In this iteration procedure we gradually
  replace a small power of 
$\eta_*/\rho^*$ with $\gamma$. 
 We will focus on the regime that $\gamma \gtrsim  \eta_*/\rho^*$, otherwise the desired local law follows directly from Proposition \ref{prop_initial}.

\begin{proposition}\label{prop_iteration}
	Fix small constants $\epsilon,\tau>0$.  Assume that for any $z_i \in \C$ and $\eta_i \in \R$ satisfying $|z_i| \leq 1+\tau$, $\min_{i=1}^2 \{|\eta_i| \rho_i \} \geq n^{-1+\epsilon}$, $\gamma \gtrsim  \eta_*/\rho^*$, the following hold for some $0\leq b < 1$, \ie
	\begin{align}\label{assump_R}
		&\Big| \big(G^{z_1}(\ii \eta_1) A_1 G^{z_2}(\ii \eta_2)-M_{12}^{A_1}\big)_{\xx \yy}\Big|\prec\frac{(\rho^*)^{\frac{1-b}{2}}}{\sqrt{n}(\eta_*)^{\frac{3-b}{2}} \gamma^{\frac{b}{2}}},
	\end{align}
	\begin{align}
		&\Big|\big(G^{z_1}(\ii \eta_1)A_1G^{z_2}(\ii \eta_2)A_2G^{z_1}(\ii \eta_1)-M_{121}^{A_1,A_2}\big)_{\xx \yy}\Big| \prec \frac{1}{\eta_*^{2-b} \gamma^{b}},\label{assump_S}
	\end{align}
	  uniformly for any deterministic bounded  matrices $A_i~(i=1,2)$ and unit vectors $\xx, \yy$, where $\gamma$ is defined in (\ref{parameter}) and  $M_{12}^{A_1}$,
	$M_{121}^{A_1,A_2}$ are defined in (\ref{eq:defM12}), (\ref{eq:defM121}).
		Then the following also hold for a larger  $b'=\min\big\{b+\theta,~1\big\}$ with some $0<\theta\leq \epsilon/10$,
	\begin{align}\label{goal_R}
		&\Big| \big(G^{z_1}(\ii \eta_1) A_1 G^{z_2}(\ii \eta_2)-M_{12}^{A_1}\big)_{\xx \yy}\Big|\prec\frac{(\rho^*)^{\frac{1-b'}{2}}}{\sqrt{n}(\eta_*)^{\frac{3-b'}{2}} \gamma^{\frac{b'}{2}}},
	\end{align}
	\begin{align}\label{goal_S}
		&\Big|\big(G^{z_1}(\ii \eta_1)A_1G^{z_2}(\ii \eta_2)A_2G^{z_1}(\ii \eta_1)-M_{121}^{A_1,A_2}\big)_{\xx \yy}\Big| \prec \frac{1}{\eta_*^{2-b'} \gamma^{b'}}.
	\end{align}
  uniformly for any deterministic bounded  matrices $A_i~(i=1,2)$ and unit vectors $\xx, \yy$. 
\end{proposition}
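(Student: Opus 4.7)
The plan is to perform one iteration of the zigzag bootstrap: start from the assumed bounds with exponent $b$, propagate them along the characteristic flow via Proposition \ref{prop:isoflow}, and then remove the added Gaussian component via a Green function comparison, collecting a net improvement from $b$ to $b'=b+\theta$ in the regime $\gamma \gtrsim \eta_*/\rho^*$ (the complementary regime $\gamma \lesssim \eta_*/\rho^*$ is covered directly by Proposition \ref{prop_initial}).

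For the \emph{zig step}, for each target $(\eta_i,z_i)$ we use Lemma \ref{lem:ODEtheo} to pull back to initial parameters $(\eta_{i,0},z_{i,0})$ and a small $n$-dependent time $s>0$ such that the characteristic flow \eqref{eq:char} carries $(\eta_{i,0},z_{i,0})$ to $(\eta_i,z_i)$ in time $s$. Applying \eqref{assump_R}--\eqref{assump_S} (with exponent $b$) to an auxiliary i.i.d.\ matrix $\widetilde X$ at the pulled-back initial data supplies the input \eqref{eq:inassiso} for Proposition \ref{prop:isoflow}. That proposition then propagates the bounds, with unchanged exponent $b$, to time $s$, giving bounds at the target parameters $(\eta_i,z_i)$ for the OU-evolved matrix $X_s$ obtained from $\widetilde X$ via \eqref{eq:OUmat}.

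For the \emph{zag step}, we use a moment-matching Green function comparison to transfer the bounds from $X_s$ back to the original i.i.d.\ matrix $X$. Writing the difference of expectations of the two-resolvent chain between $X$ and $X_s$ as an integral over the OU interpolation, the integrand is controlled via cumulant expansion; the leading corrections involve averaged and isotropic chains of three (and four) resolvents, which we bound using \eqref{assump_S}, \eqref{assump_R}, the single-resolvent local law \eqref{entrywise}, and the reduction inequalities \eqref{eq:redina}. Crucially, in the regime $\gamma \gtrsim \eta_*/\rho^*$, every factor of $\eta_*/\rho^*$ arising in the error bounds can be traded for a factor of $\gamma$, which produces the improvement factor $\big(\eta_*/(\rho^*\gamma)\big)^{\theta/2}$ needed to upgrade \eqref{assump_R} to \eqref{goal_R}. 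The improved three-resolvent bound \eqref{goal_S} is derived in parallel from an analogous Dyson-equation based expansion combined with \eqref{eq:redina}.

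The main obstacle is the precise bookkeeping of powers of $\eta_*$, $\rho^*$, and $\gamma$ throughout the GFT step: one must verify simultaneously the correct $\rho^*$-dependence for both two- and three-resolvent bounds, which is especially delicate near the spectral edge where $\rho^*$ is small and the three-resolvent bound \eqref{assump_S} carries no $\rho^*$-gain. The restriction $\theta \le \epsilon/10$ is imposed so that the single-resolvent error $n^\xi/(n\eta)$ and the fluctuations picked up along the flow of length $s$ remain subdominant relative to the per-iteration gain $\big(\eta_*/(\rho^*\gamma)\big)^{\theta/2}$, allowing the procedure to be iterated $\lceil 1/\theta\rceil$ times until the optimal exponent $b'=1$ is reached.
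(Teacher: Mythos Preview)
Your high-level strategy is correct: the gain from $b$ to $b'$ is produced in the zag (cumulant-expansion) step, and the regime $\gamma \gtrsim \eta_*/\rho^*$ is precisely where the trade $\eta_*/\rho^* \to \gamma$ is available. However, a \emph{single} zig-zag as you describe cannot cover the full range of $\eta_*$, and this is a genuine gap. The differential inequality driving the zag (Lemma~\ref{lemma_RS_gft}) has the form
\[
\Big|\frac{\dd \E |R^{xy}_t|^{2m}}{\dd t}\Big| \lesssim C_n \Big(\E|R_t^{xy}|^{2m} + O_\prec(\mathcal{E}_{b'})^{2m}\Big), \qquad C_n = 1 + \frac{1}{\sqrt{n}\,\eta_*^{1+\epsilon/2}},
\]
so the Gronwall factor $e^{C_n s}$ is controlled only if $C_n \cdot s \lesssim 1$. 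If you take $s\sim 1$ so as to pull back to spectral parameters in the global-law regime (which is what furnishes a zig input \emph{strictly better} than $\mathcal{E}_b$, namely $\prec n^{-1/2}$), then Gronwall blows up whenever $\eta_* \ll n^{-1/2}$. If instead you take $s$ genuinely small, then the pulled-back $\eta_{i,0}$ are barely larger than $\eta_i$, the only available zig input is the assumption~\eqref{assump_R}--\eqref{assump_S} with exponent~$b$, Proposition~\ref{prop:isoflow} propagates $\mathcal{E}_b$ (not $\mathcal{E}_{b'}$) down to time $s$, and Gronwall returns $\E|R_0^{xy}|^{2m} \lesssim e^{C_n s}\mathcal{E}_b^{2m} + s\,\mathcal{E}_{b'}^{2m} \sim \mathcal{E}_b^{2m}$ --- no improvement.

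The paper's proof resolves this with an \emph{internal iteration} over a sequence of scales $\eta_*^{(k)} = n^{-a_k}$. The first step uses $T \sim 1$, pulling back to the global-law regime so that the zig output at time $T$ is the optimal $\mathcal{E}_1 \le \mathcal{E}_{b'}$; the zag then establishes~\eqref{goal_R}--\eqref{goal_S} at $t=0$, but only in the range $\eta_* \gtrsim n^{-1/2+\epsilon/2}$ where $C_n T \lesssim 1$. Each subsequent step $k$ feeds the $\mathcal{E}_{b'}$ bound just obtained at scale $\eta_*^{(k-1)}$ back in as the zig input (so Proposition~\ref{prop:isoflow} now propagates $\mathcal{E}_{b'}$, not $\mathcal{E}_b$), with a shorter $T^{(k)} \sim \eta_*^{(k-1)}/\rho_*$ chosen so that $C_n T^{(k)} \lesssim 1$ at the new, smaller scale $\eta_*^{(k)}$; after $O(\epsilon^{-2})$ such steps one reaches the minimal $\eta_*$. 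This layered structure --- using the output of each zag as the input of the next zig --- is the missing mechanism in your outline.

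Two minor inaccuracies: the cumulant expansion in Lemma~\ref{lemma_RS_gft} produces for $\partial^s R^{xy}$ only products of single-$G$ entries with one $(G_1AG_2)$-factor (and for $\partial^s S^{xy}$ at most one additional $(G_1A_1G_2A_2G_1)$-factor), never four-resolvent chains; and the reduction inequalities~\eqref{eq:redina} are used in the zig step (Proposition~\ref{prop:isoflow}) to control the quadratic variation, not in the zag.
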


\begin{proof}[Proof of (\ref{enlaw12})]
 Note that Proposition \ref{prop_initial} implies that the initial assumptions in (\ref{assump_R})-(\ref{assump_S}) with $b=0$ are satisfied. Starting from (\ref{assump_R})-$(\ref{assump_S})$ for $b=0$ as the initial step and iterating 
Proposition \ref{prop_iteration} for at most $O(\theta^{-1})$
 times by gradually increasing $b$ from 0 to $1$, we have proved the isotropic local law for $G^{z_1}A_1G^{z_2}$ in (\ref{enlaw12}) modulo Proposition \ref{prop_iteration} that we will prove shortly. For a graphical illustration of this iterative procedure we refer to Figure~\ref{fig:iterate}.
 \end{proof}
 

 Note that as a by-product, we also obtained
 the following isotropic version for three resolvents. This
  Lemma \ref{lemma:3G} will be needed as an auxiliary estimate 
  in the next section to prove the averaged local law in (\ref{avlaw12}). 
\nc
\begin{lemma}\label{lemma:3G}
	Fix small constants $\epsilon,\tau>0$. For any deterministic matrices $\|A_1\|+\|A_2\| \lesssim 1$ and vectors $\|\xx\|, \|\yy\| \lesssim 1$, the following holds
	\begin{equation}
		\label{eq:3giso}
		\Big| \big\langle \xx, \big( G^{z_1}(\ii \eta_1)A_1G^{z_2}(\ii \eta_2)A_2G^{z_1}(\ii \eta_1)-M_{121}^{A_1,A_2}\big) \yy \big \rangle \Big|\prec \frac{1}{\eta_*\gamma}.
	\end{equation}
	uniformly for any $ |z_i|\leq 1+\tau$ and $\min_{i=1}^2 \{|\eta_i| \rho_i \} \geq n^{-1+\epsilon}$. 
\end{lemma}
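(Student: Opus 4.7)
The target bound $\frac{1}{\eta_*\gamma}$ matches exactly the specialization of \eqref{goal_S} at the endpoint $b'=1$, so my plan is to read off Lemma~\ref{lemma:3G} as an immediate byproduct of the same iterative bootstrap that already yields the two-resolvent isotropic local law \eqref{enlaw12}. In other words, the three-resolvent chain $G^{z_1}A_1G^{z_2}A_2G^{z_1}$ is coupled to the two-resolvent chain $G^{z_1}A_1G^{z_2}$ inside Proposition~\ref{prop_iteration}: the induction works on the pair of bounds $(b,b)$ simultaneously, and each application of the iteration step simultaneously improves both \eqref{assump_R} and \eqref{assump_S} by the same increment $\theta$.

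The steps are: (i) use Proposition~\ref{prop_initial} to initialize, noting that \eqref{initial_en} and \eqref{initial_3G} give exactly \eqref{assump_R} and \eqref{assump_S} with $b=0$; (ii) apply Proposition~\ref{prop_iteration} successively with increment $\theta\in(0,\epsilon/10]$, each time replacing $b$ by $b'=\min\{b+\theta,1\}$; (iii) after $O(\theta^{-1})$ steps we reach $b=b'=1$, at which point \eqref{goal_S} reads $\frac{1}{\eta_*\gamma}$, which is precisely \eqref{eq:3giso}. The uniformity in $\|\xx\|,\|\yy\|\lesssim 1$ and $\|A_i\|\lesssim 1$ is preserved at every step, and so is the regime $\min_i\rho_i|\eta_i|\ge n^{-1+\epsilon}$ required by the iteration.

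The only thing that actually needs a separate argument is the complementary regime $\gamma\lesssim \eta_*/\rho^*$, which is excluded from Proposition~\ref{prop_iteration}'s hypotheses. In this regime, however, one has $\frac{1}{\eta_*\gamma}\gtrsim \frac{\rho^*}{\eta_*^2}$, and since $\rho^*\lesssim 1$ a direct comparison shows that the non-gain bound \eqref{initial_3G} of Proposition~\ref{prop_initial} already implies \eqref{eq:3giso} (after checking the numerical comparison $1/\eta_*^2\lesssim 1/(\eta_*\gamma)$ when $\gamma\lesssim \eta_*$, and otherwise using a crude bound on $M_{121}$ via \eqref{M_3_bound}). So the proof splits naturally into the two regimes, with the interesting regime handled entirely by the iteration.

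The main obstacle, as usual for such bootstrap arguments, is not in the mechanical iteration but in verifying that Proposition~\ref{prop_iteration} is actually available in a form whose \emph{conclusion} with $b'=1$ still respects the regime condition $\gamma\gtrsim \eta_*/\rho^*$; this is what makes the endpoint $b=1$ accessible without further technical input. Since the joint induction on the two- and three-resolvent estimates in Proposition~\ref{prop_iteration} has already been set up precisely so that the exponents of $\eta_*$ and $\gamma$ balance at each step, no new cancellation or stability estimate is required for Lemma~\ref{lemma:3G} beyond what is proved there.
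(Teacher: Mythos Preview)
Your proposal is correct and coincides with the paper's approach: Lemma~\ref{lemma:3G} is stated there as an immediate by-product of iterating Proposition~\ref{prop_iteration} from $b=0$ (initialized via Proposition~\ref{prop_initial}) up to $b'=1$, exactly as you describe. The only small refinement concerns the complementary regime $\gamma\lesssim\eta_*/\rho^*$: when $\eta_*\lesssim\gamma\lesssim\eta_*/\rho^*$ the crude bound \eqref{initial_3G} alone is not quite sufficient, but the sharper estimate $|(G_1A_1G_2A_2G_1)_{\xx\yy}|\prec \rho^*/\eta_*^2$ (available from \eqref{ineqqq}) together with \eqref{M_3_bound} for $M_{121}$ closes the argument since $\rho^*/\eta_*^2\lesssim 1/(\eta_*\gamma)$ in this regime.
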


 Before we give the actual proof of Proposition \ref{prop_iteration}, we introduce the following short-hand notations and a preliminary lemma for zag step (see Lemma \ref{lemma_RS_gft} below). \nc Define
\begin{align}\label{RS_def}
	R^{xy}:=\big(G_1 A G_2-M^{A}_{12}\big)_{\xx \yy}, \qquad S^{xy}:=\big(G_{1}A_1G_{2}A_2G_{1}-M^{A_1,A_2}_{121}\big)_{\xx \yy},
\end{align}
with $G_i:=G^{z_i}(\ii \eta_i)$, $i=1,2$,
and set the following control parameters 
\begin{align}\label{EE}
	&\mathcal{E}_{\mathfrak{b}}:=\frac{(\rho^*)^{\frac{1-\mathfrak{b}}{2}}}{\sqrt{n}(\eta_*)^{\frac{3-\mathfrak{b}}{2}} \gamma^{\frac{\mathfrak{b}}{2}}}, \qquad \qquad  \wt{\mathcal{E}}_{\mathfrak{b}}:=\frac{1}{\eta_*^{2-\mathfrak{b}} \gamma^\mathfrak{b}}, \qquad \qquad  \mathfrak{b}\in [0,1].
\end{align}
It is easy to check that, for any $0\leq b\leq b'\leq 1$,
\begin{align}\label{para_bound}
	\frac{1}{\sqrt{n\gamma }\eta_*} \lesssim \mathcal{E}_{b'} \leq \mathcal{E}_b \leq \frac{\sqrt{\rho^*}}{\sqrt{n} \eta_*^{3/2}}, \qquad \qquad \frac{1}{\eta_* \gamma} \lesssim \wt{\mathcal{E}}_{b'} \leq \wt{\mathcal{E}}_b \leq \frac{1}{\eta_*^2}.
\end{align}
With the above short-hand notations, we aim to prove, for any $z_i \in \C$ and $\eta \in \R$ satisfying $|z_i|\leq 1+\tau$, $n|\eta_i| \rho_i \gtrsim n^{\epsilon}$, and $\gamma\gtrsim \eta_*/\rho^*$},
\begin{align}\label{simple}
	|R^{xy}| \prec \mathcal{E}_{b'}, \qquad |S^{xy}| \prec \wt{\mathcal{E}}_{b'}, \qquad b'=\min\big\{b+\theta,~1\big\},
\end{align}
assuming that $|R^{xy}| \prec \mathcal{E}_b$ and $ |S^{xy}| \prec \wt{\mathcal{E}}_b$ for some $0\leq b<1$.

 The proof of Proposition \ref{prop_iteration} will be a Gronwall argument [zag step] together with Proposition \ref{prop:isoflow} [zig step]. The bounds \eqref{goal_R} and \eqref{goal_S}
are already  known from Proposition \ref{prop:isoflow} (together with the global laws needed as an initial input
that will be stated later in \eqref{eq:englobal}--\eqref{eq:englobal3G}) 
 whenever  $X$ has a small  Gaussian component. We show that this Gaussian component
does not change the estimate up to the relevant order by running an Ornstein-Uhlenbeck flow
to remove the small Gaussian component.
We will work with the time evolution of  high moments of $R^{xy}_t, S^{xy}_t$.  The key of the Gronwall argument
is to give an estimate on the time derivative of these high moments in terms of themselves, this will be
the content of Lemma \ref{lemma_RS_gft} below. Similarly to the previous section, to keep the presentation simple, we now focus only on the complex case.

We now  setup this flow argument.
	Given the initial ensemble $H^{z}$ in (\ref{def_G}) for fixed $z\in \C$ and $X$ satisfying Assumption \ref{ass:mainass}, we consider the following Ornstein-Uhlenbeck matrix flow
\begin{align}\label{flow}
	\dd H^z_t=-\frac{1}{2} (H^z_t+Z)\dd t+\frac{1}{\sqrt{n}} \dd \mathcal{B}_t, \quad Z:=\begin{pmatrix}
		0  &  zI  \\
		\overline{z}I   & 0
	\end{pmatrix}, \quad 
	\mathcal{B}_t:=\begin{pmatrix}
		0  &  B_t  \\
		B^*_t   & 0
	\end{pmatrix},
\end{align}
with initial condition $H^z_{t=0}= H^z$, where $B_t$ is an $n \times n$ matrix with i.i.d. standard complex valued Brownian motion entries.  Indeed, the matrix flow $H_t^z$ interpolates between the initial matrix $H^{z}$
and the same  matrix 
with $X$ replaced with an independent complex Ginibre ensemble, \ie
\begin{align}\label{Htt}
	X_t \stackrel{\mathrm{d}}{=} e^{-\frac{t}{2}} X + \sqrt{1-e^{-t}} \mathrm{Gin}(\C).
\end{align}
The corresponding time-dependent resolvent of $H_t^{z_i}~(i=1,2)$ is denoted by 
$$ G_i:=G^{z_i}_t(\ii \eta_i)=(H^{z_i}_t-\ii \eta_i)^{-1}, \qquad i=1,2,$$
which we use to define time dependent $R^{xy}_t$, $S^{xy}_t$ as in (\ref{RS_def}).  For notational brevity, we will not keep track of the dependence of $t$ in the notations $G_i$ from now on. \nc Note that the parameters $z_i\in \C$, $\eta_i \in \R$ are fixed and hence the deterministic matrices $M^{A}_{12}$ and $M^{A_1,A_2}_{121}$ in (\ref{RS_def}) are independent of the time $t$. 

\begin{lemma}\label{lemma_RS_gft}
	Fix small constant $\epsilon,\tau>0$ and  any integer $m\geq 1$.
	 For any  $|z_i| \leq 1+\tau$ and $\eta_i \in \R$ satisfying $ \min_{i=1}^2 \{|\eta_i| \rho_i\} \geq n^{-1+\epsilon}$ and $\gamma \gtrsim \eta_*/ \rho^* $, 	
	assuming\footnote{We remark that stochastic domination $\prec$ 
	and upper bounds of arbitrarily high moments of a random variable 
	are equivalent for random variables bounded by some $n^C$
	 which is always true in our case, \ie  for $|X|\le n^C$ we have
	 that $X \prec \Psi$ if and only if  $\E|X|^{2m} \le n^\xi \Psi^{2m}$ for any $m\geq 1$ and $\xi>0$.
	  We frequently use this fact  in the proof without mentioning it. \nc}
	that for some $0\leq b<1$, 
	\begin{align}  
		&\E|R_t^{xy}|^{2m} \prec (\mathcal{E}_b)^{2m}, \quad \qquad  R_t^{xy}=\big(G_1 A G_2-M^{A}_{12}\big)_{\xx \yy}, \nonumber\\
		&\E|S_t^{xy}|^{2m} \prec (\wt{\mathcal{E}}_b)^{2m},  \quad \qquad  S_t^{xy}=\big(G_{1}A_1G_{2}A_2G_{1}-M^{A_1,A_2}_{121}\big)_{\xx \yy},   \nonumber
	\end{align}
	  uniformly for any $t\geq 0$, any deterministic bounded  matrices $A_i~(i=1,2)$ and unit vectors $\xx, \yy$,  then  the following hold for $b'=\min\big\{b+\theta,~1\big\}$, $0<\theta\leq \epsilon/10$, and the prefactor $C_n:=1+\frac{1}{\sqrt{n} \eta_*^{1+\frac{\epsilon}{2}}}$,
	\begin{align}\label{gft_1}
		\Big|\frac{\dd \E |R^{xy}_t|^{2m}}{\dd t}\Big| \lesssim & C_n
		\Big(  \E \big|R^{xy}_t\big|^{2m} +O_\prec\big(\mathcal{E}_{b'}\big)^{2m}\Big),
	\end{align}
	\begin{align}\label{gft_2}
		\Big|\frac{\dd \E \big|\mathcal{S}^{xy}_t\big|^{2m}}{\dd t}\Big| \lesssim & C_n
		\Big(  \E \big|\mathcal{S}^{xy}_t\big|^{2m} +O_\prec\big(\wt{\mathcal{E}}_{b'}\big)^{2m}\Big),
	\end{align}
	 uniformly for any $t\geq 0$, any deterministic bounded  matrices $A_i~(i=1,2)$ and unit vectors $\xx, \yy$. 
\end{lemma}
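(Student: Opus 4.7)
The plan is a Green's function comparison (GFT) computation along the Ornstein--Uhlenbeck flow \eqref{flow}. Since $M^A_{12}$ and $M^{A_1,A_2}_{121}$ are time-independent, I would differentiate $\E|R^{xy}_t|^{2m}$ and $\E|S^{xy}_t|^{2m}$ directly via matrix Itô calculus. Using $\dd G_t = \tfrac{1}{2}G_t(H^z_t+Z)G_t\,\dd t - G_t\,\dd \mathcal{B}_t\,G_t/\sqrt{n} + \tfrac{1}{n}G_t\,\mathcal{S}[G_t]\,G_t\,\dd t$, the drift part, once combined with the Matrix Dyson equation \eqref{MDE} and matched against the deterministic evolution \eqref{eq:evolM12} (respectively \eqref{eq:evolM121iso}), cancels exactly the leading order. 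What remains are terms linear in $R^{xy}_t$ itself (producing the $\E|R^{xy}_t|^{2m}$ summand on the right of \eqref{gft_1}) together with subleading fluctuation contributions that must be estimated by $(\mathcal{E}_{b'})^{2m}$.

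Each subleading contribution is a product of $|R^{xy}_t|^{2m-1}$ times a scalar error, where the scalar error is itself a product of bounded deterministic $M$-factors and one ``improved'' piece belonging to one of three classes: (a) a single-resolvent fluctuation $\langle G_i - M_i\rangle$, bounded by $n^\xi/(n|\eta_*|)$ via Theorem \ref{local_thm}; (b) an averaged two-resolvent error $\langle (G_1 B G_2 - M^B_{12})E\rangle$, bounded by $n^\xi/(\sqrt{n\ell}\gamma)$ via the separately established averaged local law \eqref{avlaw12}; or (c) a longer resolvent chain reduced via the inequalities \eqref{eq:redina} into products of the ingredients above with $|R^{xy}_t|$ or $|S^{xy}_t|$ itself, estimated by the induction hypothesis $|R^{xy}_t|\prec \mathcal{E}_b$, $|S^{xy}_t|\prec \wt{\mathcal{E}}_b$. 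The non-Gaussian cumulant corrections, obtained by replacing the Gaussian integration-by-parts identity by its higher-order cumulant expansion, generate extra $G$-factors each costing $1/\eta_*$ together with a factor $n^{-1/2}$, which combine to produce precisely the prefactor $C_n = 1 + 1/\sqrt{n\eta_*^{1+\epsilon/2}}$. Applying Young's inequality $|AB^{2m-1}|\le |A|^{2m}/(2m) + (2m-1)|B|^{2m}/(2m)$ to each monomial separates the result into a part proportional to $\E|R^{xy}_t|^{2m}$ and an error of size $(\mathcal{E}_{b'})^{2m}$, yielding \eqref{gft_1}.

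The quantitative gain from $\mathcal{E}_b$ to $\mathcal{E}_{b'}$ is produced by the $\gamma^{-1}$ factor in (b): combined with the volatility $(\rho^*)^{1/2}/(\sqrt{n}\eta_*^{3/2})$ of the martingale part of $\dd R^{xy}_t$, this yields the small fixed improvement $\gamma^{-\theta}$ per iteration, as seen directly in the explicit comparison between $\mathcal{E}_b$ and $\mathcal{E}_{b+\theta}$. The three-resolvent case \eqref{gft_2} follows by an analogous scheme with $R$ replaced by $S$, $\mathcal{E}$ by $\wt{\mathcal{E}}$, and \eqref{eq:evolM12} by \eqref{eq:evolM121iso}. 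The main technical obstacle is the combinatorial bookkeeping for \eqref{gft_2}: the Itô expansion of $|S^{xy}_t|^{2m}$ creates resolvent chains of length up to five, which must be systematically shortened via \eqref{eq:redina}, and one must verify that every reduced monomial falls into one of the three classes (a), (b), (c) above and thereby fits the improved target $\wt{\mathcal{E}}_{b'}$. Once this enumeration is carried out, the bound \eqref{gft_2} follows by the same Young's inequality argument as for \eqref{gft_1}.
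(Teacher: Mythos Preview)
Your proposal conflates the \emph{zig} step (characteristic-flow analysis, Proposition~\ref{prop:isoflow}) with the \emph{zag} step that this lemma actually carries out. In the zag step the spectral parameters $z_i,\eta_i$ are held \emph{fixed} along the OU flow \eqref{flow}, so $M^A_{12}$ and $M^{A_1,A_2}_{121}$ are genuinely constant in $t$; there is no ``deterministic evolution \eqref{eq:evolM12}'' to match against. The cancellation of the leading (second-cumulant) contribution to $\tfrac{\dd}{\dd t}\E|R^{xy}_t|^{2m}$ is not obtained from the MDE but is automatic from the OU drift: after applying the cumulant expansion to the drift term, the second cumulant cancels the It\^o correction exactly, and only $k\ge 3$ cumulant terms survive, as in \eqref{cumulant_app0}. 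The terms that remain are not of the types (a), (b), (c) you list --- those are precisely the terms that arise in \eqref{eq:2gisoa} for the zig flow --- but rather sums over indices $a,B$ of products of isotropic entries $(G)_{\bm u\bm v}$ and $(G_1AG_2)_{\bm u\bm v}$ with $\bm u,\bm v\in\{\bm x,\bm y,\bm e_a,\bm e_B\}$; see \eqref{rule_xy}--\eqref{third}. These are bounded using the single-resolvent isotropic law \eqref{entrywise}, the hypothesis $|R^{xy}_t|\prec\mathcal{E}_b$, and the summation estimates \eqref{estimate}, followed by the algebraic relations \eqref{eq_EE}.

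In particular, your invocation of the averaged two-resolvent law \eqref{avlaw12} in class (b) is circular: \eqref{avlaw12} is proved in Section~\ref{sec:2G_ave} \emph{after} and \emph{using} the isotropic law \eqref{enlaw12}, which in turn rests on Proposition~\ref{prop_iteration} and hence on this very lemma. Likewise, the reduction inequalities \eqref{eq:redina} play no role here. The genuine source of the improvement $\mathcal{E}_b\to\mathcal{E}_{b'}$ is not a $\gamma^{-1}$ from an averaged law, but the extra small factors $\rho$ or $\sqrt{\rho/(n\eta)}$ coming from the additional resolvent entries created by each derivative in the $k\ge 3$ cumulant terms; these are converted to the $(\eta/\rho\gamma)^{\theta/2}$ gain via \eqref{eq_EE}. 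Your sketch would need to be rewritten along these lines to constitute a proof.
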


The proof of Lemma \ref{lemma_RS_gft} will be postponed till the end of this section. Now we are ready to use Proposition \ref{prop:isoflow} (\emph{zig-step}) and Lemma \ref{lemma_RS_gft}  (main technical input for \nc \emph{zag-step})  
to prove Proposition \ref{prop_iteration}.  

\begin{proof}[Proof of Proposition \ref{prop_iteration}]

	Recall that the matrix flows $X_t$, $W_t$ defined in (\ref{eq:OUmat})-(\ref{eq:hermflow}) and the charateristic flow $\Lambda(t):=\Lambda_t$ defined in (\ref{eq:charflowmat}).  The corresponding resolvent is given by $G_t^{z_t}(\ii \eta_t)=(H_t^{z_t}-\ii \eta_t)^{-1}$, where $H_t^{z_t}$ is defined as in (\ref{def_H}) with time dependent $z_t$ and $X_t$. We remark that one should not confuse this notation with $G^{z}_t(\ii \eta)=(H_t^z-\ii \eta)^{-1}$ defined below (\ref{Htt}), where both $\eta\in \R$ and $z \in \C$ are fixed.

	Fix any $|z_i| \leq 1+\tau$ and $\eta_i$ with $n |\eta_i| \rho_i \gtrsim n^{\epsilon}$
	where $\rho_i:= \rho^{z_i}(\ii\eta_i)$.  Choose a  small $T\sim 1$. Using Lemma \ref{lem:ODEtheo}, there exist initial conditions $\Lambda_i(0)$  satisfying $\mathrm{dist}(\ii \eta_i(0), \mathrm{supp}(\rho_i(0))) \gtrsim 1$ such that at time $t=T$, $z_i(T)=z_i$ and $\eta_i(T)=\eta_i$. We also remark that from Lemma \ref{lem:charprop}, the initial conditions $\eta_i(0), z_i(0), \rho_i(0)$ have the following properties: 
	\begin{align}\label{property}
		\frac{\eta_i(0)}{\rho_i(0)} \gtrsim T, \qquad \rho_i(0) \sim \rho_i(T), \qquad z_i(0)=z_i(T)(1+O(T)).
	\end{align}
	We next state the following global laws, \ie  for any 
	spectral parameters with $ \mathrm{dist}(\ii \eta_i, \mathrm{supp}(\rho^{z_i})) \sim 1$,
	\begin{equation}
		\label{eq:englobal}
		\Big| \big(G^{z_1}(\ii \eta_1)A_1G^{z_2}(\ii \eta_2)-M_{12}^{A_1}\big)_{\xx \yy}\Big|\prec \frac{1}{\sqrt{n}},
	\end{equation}
	\begin{equation}
		\label{eq:englobal3G}
		\Big| \big(G^{z_1}(\ii \eta_1)A_1G^{z_2}(\ii \eta_2) A_2 G^{z_1}(\ii \eta_1)-M_{121}^{A_1,A_2}\big)_{\xx \yy}\Big|\prec 1.
	\end{equation}
	The proof of these global laws is standard and similar to \cite[Proposition 4.1]{CEHK24} (see also \cite[Appendix B]{Optimal_local}) with the only exception that in the current case we need to consider the $2\times 2$ block structure due to the chiral symmetry, but this can be done with very minor changes in the proof, and so omitted.  These global laws directly imply that the initial condition (\ref{eq:inassiso}) of Proposition \ref{prop:isoflow} at $t=0$ is satisfied, hence from this proposition we obtain the desired local laws at $t=T \sim 1$, \ie
	\begin{align}\label{zero}
		|R_T^{xy}| \prec \frac{1}{\sqrt{n \gamma}\eta_*}, \qquad   |S_T^{xy}| \prec \frac{1}{\eta_*\gamma}, \qquad \qquad  \eta_i(T)=\eta_i, \quad z_i(T)=z_i.
	\end{align}
	This completes the zig step and it proves Theorem \ref{thm:2G} for $X$ being a Gaussian divisible matrix, 
	albeit with a large Gaussian component, 
	\ie $X_T\stackrel{\mathrm{d}}{=} e^{-\frac{T}{2}} X + \sqrt{1-e^{-T}} \mathrm{Gin}(\C)$, with $T\sim 1$.

	Next we aim to use Lemma \ref{lemma_RS_gft} to remove the Gaussian component $T \sim 1$.
         However the differential inequalities in (\ref{gft_1})-(\ref{gft_2}) are only effective in the Gronwall argument
	 when the
	 pre--factor $C_n$ is bounded, 
	 which means that $\eta_*$ needs to be
	  somewhat large. So we will need an iterative procedure to handle the smaller $\eta_*$ regime using
	  a smaller Gaussian component, hence smaller $T$, so that the larger prefactor $C_n$ is compensated
	  by the shorter integration time. 
	  
	  In the first step, we assume that $|\eta_*| \gtrsim n^{-1/2+\epsilon/2}$, then the prefactor $C_n \leq C$ for some constant $C>0$. Using Gronwall's inequality for $T\sim 1$, we have
	\begin{align}\label{Gronwall}
		\E |R_0^{xy}|^{2m} \leq e^{\int_{0}^{T} C \dd t} \E |R_T^{xy}|^{2m} + \int_{0}^{T} e^{\int_{s}^{T} C \dd t} O_\prec\big({\mathcal{E}}_{b'}\big)^{2m} \dd s =O_\prec \big({\mathcal{E}}_{b'}\big)^{2m},
	\end{align}	 
	where we also used (\ref{zero}) and  (\ref{para_bound}). One can handle $S_0^{xy}$ in a similar way.  This hence proves (\ref{simple}) for any $|z_i|\leq 1+\tau$ and $|\eta_*| \gtrsim n^{-1/2+\epsilon/2}$, \ie	\begin{align}\label{goal_k}
		\big|R^{xy}_{0}\big| \prec \mathcal{E}_{b'},  \qquad  \big|S^{xy}_0\big| \prec \wt{\mathcal{E}}_{b'}, \qquad \qquad \forall |\eta_i| \gtrsim n^{-1/2+\epsilon/2}.
	\end{align}
This completes the zag step. 
	
\begin{figure}
	\includegraphics[width=8cm]{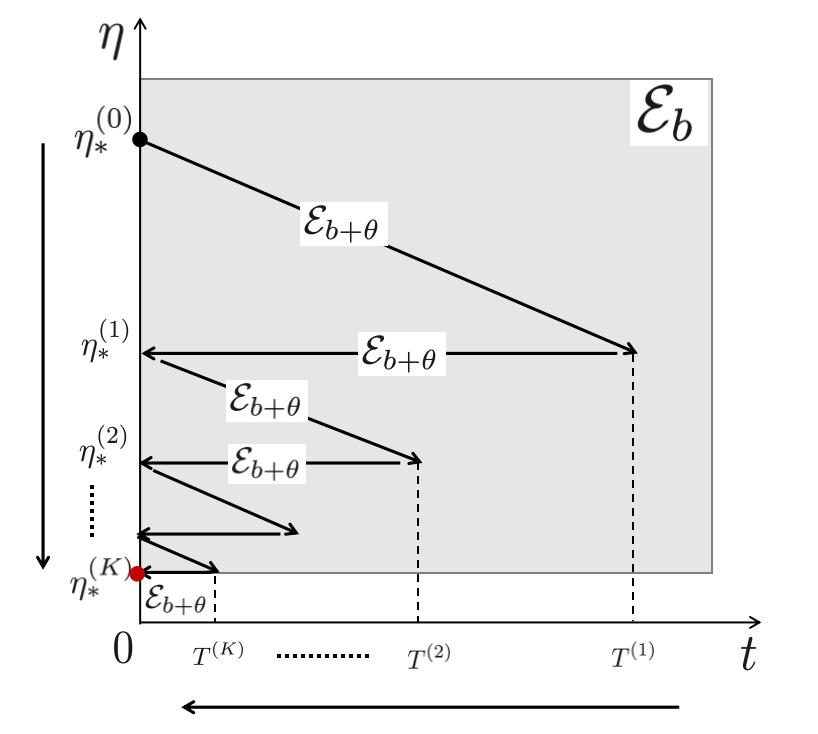}
	\centering
	\caption{The shaded square indicates the region in the $(\eta,t)$ plane where the local law holds with an error bound $\mathcal{E}_b$, as given in assumption (\ref{assump_R}). Starting from the global law at the initial black dot ($\eta_*^{(0)} \sim 1)$, the slanted line downward indicates the zig step using Proposition \ref{prop:isoflow}, while the horizontal line leftward indicates the zag step using Lemma \ref{lemma_RS_gft} and a Gronwall argument. Iterating finitely many zig-zags  until reaching the final red point ($\eta_*^{(K)} \sim \eta_*$, $K=O(\epsilon^{-1})$)  yields the improved local law with an improved error $\mathcal{E}_{b+\theta}$ for any $\min_{i=1}^{2}|\eta_i| \rho_i \gtrsim n^{-1+\epsilon}$.\nc} 
	\label{fig:zigzag}
\end{figure}

	To extend the above results to smaller $|\eta_*| \lesssim n^{-1/2+\epsilon/2}$, we use zig-zag iteratively as follows (see the illustration in Figure \ref{fig:zigzag} below). In each iteration step $k\geq 1$, we use the superscript $(k)$ to indicate the step number, \eg we denote the spectral parameters by $\eta^{(k)}_i \in \R$, $z^{(k)}_i \in \C$ and the time by $T^{(k)} \in \R^+$. Recall that our final goal is to prove (\ref{simple}) for any fixed $|z_i|\leq 1+\tau$ and $n |\eta_i| \rho_i \gtrsim n^{\epsilon}$.  We also recall from Lemma \ref{lem:charprop} that 
	the size of $\rho_i$ does not change along the characteristic flow in (\ref{eq:charflowmat}) by more than a constant factor.
	Note that (\ref{rho}) and $n |\eta_i| \rho \gtrsim n^{\epsilon}$ imply that $|\eta_i| \gtrsim n^{-1+\epsilon}$ and $n^{-1/2+\epsilon/2} \lesssim \rho_i \lesssim 1$. Then we set $\rho_*=\min_{i=1}^2 \rho_i\sim n^{-c_*}$ with $0\leq c_* \leq 1/2-\epsilon/2$. In general for any $k\geq 1$, we define the scales recursively as follows:
	\begin{align}\label{eta_k}
		\eta_*^{(k)}:=n^{-a_{k}}, \qquad a_{k}=\frac{\frac{1}{2}+a_{k-1}-c_*}{1+\frac{\epsilon}{2}}, \qquad a_1=\frac{1}{2}-\frac{\epsilon}{2}.
	\end{align}
	We will stop the iterations if $\eta^{(k)}_*$ reaches $\eta_*=\min_{i=1}^{2}|\eta_i| \gtrsim n^{-1+\epsilon}$, so 
	$a_k \ge  1-\epsilon$.  The number of iteration steps is denoted by $K$. \nc It is straightforward to check that $a_k$ is increasing in $k$ 
	 at least up to $1$ using that $0\leq c_* \leq 1/2-\epsilon/2$.

	In the $k$-th step, we fix $|\eta_i^{(k)}| \gtrsim \eta_*^{(k)}=n^{-a_{k}}$ and $|z^{(k)}_i|\leq 1+\tau$ with $\rho_i^{(k)} \sim \rho_i$. Assume that in the previous step, we have proved (\ref{goal_k}) for any $|\eta^{(k-1)}_i| \gtrsim \eta_*^{(k-1)}=n^{-a_{k-1}}$ and $|z^{(k-1)}_i| \leq 1+\tau$ with $\rho_i^{(k-1)} \sim \rho_i$. Recall the characteristic flow $\Lambda(t)=\Lambda_t$ in (\ref{eq:charflowmat}) and choose $T^{(k)} \sim \eta_*^{(k-1)}/\rho_*=n^{-a_{k-1}+c_*}$.  From Lemma \ref{lem:charprop}, there exist initial conditions $|z_i(0)|=|z_i^{(k)}|+O(T^{(k)})$ and $|\eta_i(0)| \gtrsim  \rho_*T^{(k)}=\eta_*^{(k-1)} = n^{-a_{k-1}}$ such that $\eta_i(T^{(k)})=\eta^{(k)}_i$ and $ z_i(T^{(k)})=z^{(k)}_i$. The result from the previous step 
	 in \eqref{goal_k} ensures that the assumption (\ref{eq:inassiso}) of Proposition \ref{prop:isoflow}  (with $b’$ 
	 playing the role of $b$)
  is satisfied for any $|\eta^{(k-1)}_i| \gtrsim \eta_*^{(k-1)}=n^{-a_{k-1}}$. 
	Thus  Proposition \ref{prop:isoflow} implies that for $t=T^{(k)} \sim n^{-a_{k-1}+c_*}$,  
	\begin{align}
		\big|R^{xy}_{T^{(k)}}\big| \prec \mathcal{E}_{b'},  \qquad  \big|S^{xy}_{T^{(k)}}\big| \prec \wt{\mathcal{E}}_{b'}, \qquad z_i(T^{(k)})=z^{(k)}_i,\quad \eta_i(T^{(k)})=\eta^{(k)}_i,
	\end{align}
	with $|z^{(k)}_i| \leq 1+\tau$ and $|\eta^{(k)}_i|\gtrsim \eta^{(k)}_*= n^{-a_{k}}$ such that $\rho^{(k)}_i \sim \rho_i$. Then we apply Gronwall's inequality to (\ref{gft_1})-(\ref{gft_2}) similarly to (\ref{Gronwall}) to remove the Gaussian component $T^{(k)} \sim n^{-a_{k-1}+c_*}$. Note that the prefactor $C_n=1+\frac{1}{\sqrt{n} (\eta^{(k)}_*)^{1+\frac{\epsilon}{2}}} $ satisfies
	\begin{align}\label{check}
		e^{\int_{0}^{T^{(k)}} C_n \dd t} \lesssim 1+T^{(k)} C_n \lesssim 1+ n^{-a_{k-1}+c_*} \Big(1+\frac{1}{ n^{1/2-a_{k}(1+\frac{\epsilon}{2})}}) \Big)  \lesssim 1,
	\end{align}
	using (\ref{eta_k}) and that $c_*\leq 1/2-\epsilon/2 \leq a_k$ for any $k\geq 1$. Hence we extend (\ref{goal_k}) down to the regime  $|z^{(k)}_i| \leq 1+\tau$, $|\eta^{(k)}_i|\gtrsim \eta^{(k)}_*= n^{-a_{k}}$, $\rho^{(k)}_i \sim \rho_i$ in the $k$-th iteration step.

	We iterate the above process and  stop at $K$-th step \nc when $\eta^{(K)}_*$ reaches $\eta_*=\min_{i=1}^{2}|\eta_i|$. The number of iterations $K$ indeed depends on $c_*$ and $\eta_*$ but it always remains $n$-independent. For example, if both $z_1$ and $z_2$ are in the bulk $|z_i|\leq 1-\tau$ and $\eta_*=n^{-1+\epsilon}$, hence $c_*=0$, then we only need to iterate twice using (\ref{eta_k}). The same also applies if both are at the edge $|z_i|=1+O(n^{-1/2})$ and $\eta_*=n^{-3/4+\epsilon}$, in which
	case $c_*=1/4-\epsilon/4$. However, if $z_1$ is in the bulk and $z_2$ is at the edge with $c_*=1/4-\epsilon/4$ and $\eta_*=n^{-1+\epsilon}$, then we need to iterate for three times. The worst situation is when $|z_1|=1-\tau$ and $|z_2|=1+\tau$ with $c_*=1/2-\epsilon/2$ and $\eta_*=n^{-1+\epsilon}$. Then from (\ref{eta_k}), we know that
	$$a_k-a_{k-1}=\frac{a_{k-1}+\frac{\epsilon}{2}}{1+\frac{\epsilon}{2}}-a_{k-1}=\frac{\epsilon}{2}\frac{1-a_{k-1}}{1+\frac{\epsilon}{2}} \lesssim \epsilon^2,$$
	using that $a_k\leq 1-\epsilon$. This implies that we need to iterate for at most $O(\epsilon^{-2})$ times to extend (\ref{goal_k}) down to the final regime $n|\eta_i| \rho_i \gtrsim n^{\epsilon}$. 	
\end{proof}

To summarize the proof of the isotropic local law, we use Figure~\ref{fig:iterate} to illustrate both the zig-zag iteration used in the proof of Proposition~\ref{prop_iteration} and the $b$-iterations used to prove (\ref{enlaw12}).
\medskip
 
\begin{figure}
	\includegraphics[width=16cm]{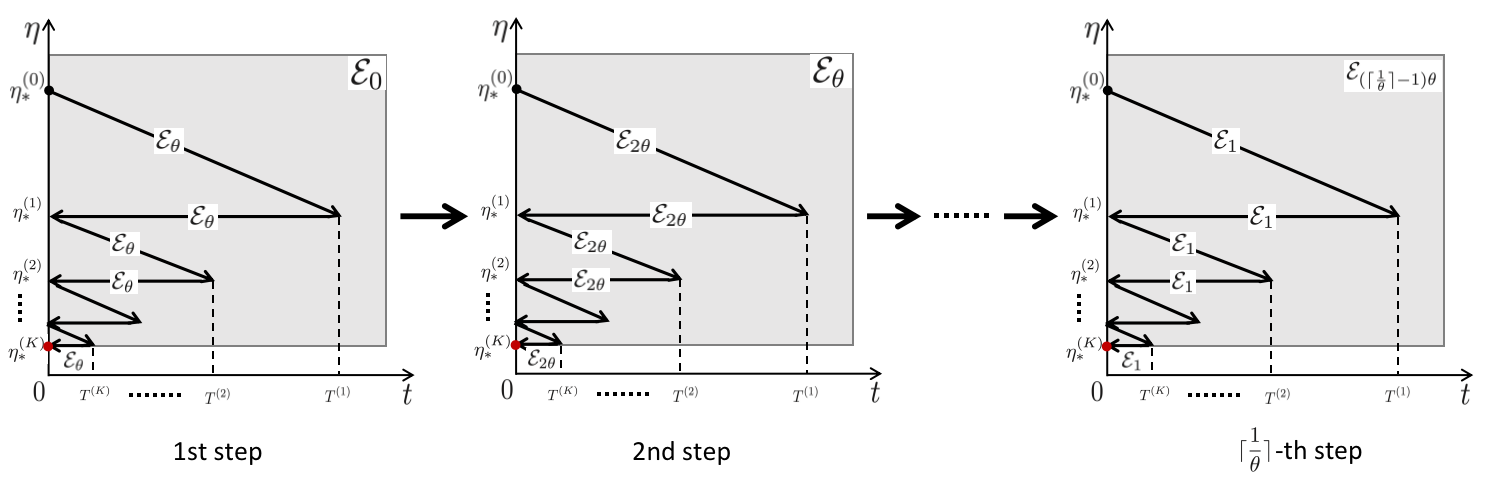}
	\centering
	\caption{In each step we use the same zig-zag as explained in Figure \ref{fig:zigzag} to improve the local law obtained in the previous step (with the error bound $\mathcal{E}_{(k-1)\theta}$ indicated on the upper right corner of the shaded square) to the next order bound $\mathcal{E}_{k\theta}$ (indicated along the zigzag lines). Starting from Theorem \ref{prop_initial} as the initial input, we iterate this process for $\lceil \frac{1}{\theta} \rceil$ times to improve the a priori error bound $\mathcal{E}_0$ to the desired $\mathcal{E}_1$.\nc}
	\label{fig:iterate}
\end{figure}

Finally, we end this section with the proof of Lemma \ref{lemma_RS_gft}.

\begin{proof}[Proof of Lemma \ref{lemma_RS_gft}]
	To simplify the notations, we  assume that  $|\eta_1| \sim |\eta_2| \sim \eta$ and $\rho_1 \sim \rho_2 \sim \rho$
	for some $\eta, \rho$. 
	The proof for general $\eta_i$ and $\rho_i$ is essentially the same with $\eta$, $\rho$ below replaced by $\eta_*$, $\rho^*$. \nc
	
	We first apply It\^{o}'s formula to $|R^{xy}_t|^{2m}=(R^{xy}_t)^{m} (\overline{R^{xy}_t)})^m$ and perform the cumulant expansions (see \eg \cite[Lemma 7.1]{He+Knowles})  as below. Without loss of generality, we may assume in the following that all the cumulants of the normalized entries of $\sqrt{n}X$ are equal to one. We thus obtain 
	\begin{align}\label{cumulant_app0}
		\frac{\dd \E |R^{xy}_t|^{2m}}{\dd t}=&	\sum_{k=3}^{K_0} \frac{1}{k!} \mathcal{T}_{k}+O_\prec\big( n^{-100m}\big)\nonumber\\
		=&\sum_{k=3}^{K_0} \frac{1}{k!} \left(\sum_{l=1}^{k}  \sum_{s_1+\cdots+s_l=k;s_i \geq 1} \mathcal{T}^{(s_1,\cdots, s_l)}_{k} \right)+O_\prec\big( n^{-100m}\big),
	\end{align}
	with the $k$-th order cumulant expansion term given by
	\begin{align}\label{higher_xy}
		\mathcal{T}^{(s_1,\cdots, s_l)}_{k}:=\frac{1}{n^{\frac{k}{2}}} \sum_{a=1}^{n} \sum_{B=n+1}^{2n} \E\Big[   
		\big(\partial^{s_1} R^{xy}_t\big)  \cdots \big(\partial^{s_l} R^{xy}_t\big)  \big(R^{xy}_t\big)^{2m-l}\Big], \qquad s_i \geq 1,
	\end{align}
 where  $\partial^{s_i}$ denotes the $s_i-$th directional derivative either in the direction $h_{aB}$ or in $h_{aB}$, with $(h_{\mathfrak{u}\mathfrak{v}})_{\mathfrak{u},\mathfrak{v}\in [2n]}:=H^{z}_t$ given by (\ref{flow}). Note that for $a,b\in [n]$ and $A,B\in [n+1,2n]$, the corresponding entries $\{h_{ab}\}$ and $\{h_{AB}\}$ all equal to zero.  For notational brevity, we will not distinguish between  $\partial h_{aB}$ and $\partial h_{Ba}$ and use the notation  $\partial$ to indicate either $\partial/\partial h_{aB}$ or $\partial/\partial h_{Ba}$.  From now on, with a slight abuse of notation $R^{xy}_t$ indicates either $R^{xy}_t\in \C$ itself or its complex conjugate $\overline{R^{xy}_t)}$.  Here we stopped the cumulant expansions in (\ref{cumulant_app0}) at a sufficiently high power $K_0=O(m)$, and the last error term $O_\prec( n^{-100m})$ was estimated using the moment condition in (\ref{eq:hmb}) and that $\|G\|\leq |\eta|^{-1}$. This argument is quite standard and already used in many previous works, \eg below \cite[Eq. (7.6)]{gumbel}, so we omit the details.

	By a direct computation, we obtain that
	\begin{align}\label{rule_xy}
		\frac{\partial R^{xy}_t}{\partial h_{aB}}=&-\big((G_1)_{\xx \ea} (G_1AG_2)_{\eB \yy}+(G_1AG_2)_{\xx \ea} (G_2)_{\eB \yy}\big).
	\end{align}
	 In general for any $s\geq 1$, each partial derivative $\partial^{s} R^{xy}_t$ contains a product of $s$ Green function entries $\prod_{i=1}^{s} (G)_{ \bm u_i, \bm v_i}$ and one factor $(G_1AG_2)_{\bm u_{s+1}  \bm v_{s+1}}$ with $\bm u_i, \bm v_i~(1\leq i\leq s+1)$ chosen from $\{\xx, \yy, \ea, \eB\}$. Moreover,  the choice of $\xx$ and $\yy$ only appears once, while both $a$ and $B$ appear $s$ times. For instance, we list below several examples of  $\partial^2 R^{xy}_t$  and $\partial^3 R^{xy}_t$, \ie
	\begin{align}
		&\partial^2 R^{xy}_t: \qquad 	 (G_1)_{\xx \eB} (G_1)_{aa} (G_1AG_2)_{\eB \yy}, \quad (G_1)_{\xx \ea} (G_1AG_2)_{BB} (G_2)_{\ea \yy};\label{second}\\
		&	\partial^3 R^{xy}_t: \qquad (G_1)_{\xx \ea} (G_1)_{aa}(G_1)_{BB} (G_1AG_2)_{\eB \yy}, \qquad (G_1)_{\xx \ea} (G_1)_{aa}(G_1AG_2)_{BB} (G_2)_{\eB \yy}.\label{third}
	\end{align}
	Moreover, we introduce the following estimates on the products of Green function entries.	Note that using (\ref{Mmatrix}) and (\ref{M_bound}), we have\nc
	\begin{align}\label{M_term}
		\sum_{\mathfrak{j}=1}^{2n} |(M)_{\uu \mathbf{e}_{\mathfrak{j}}}|^{k} \lesssim  1+  \sqrt{n} \one_{k=1}, \qquad\sum_{\mathfrak{j}=1}^{2n} |(M^A_{12})_{\uu \mathbf{e}_{\mathfrak{j}}}|^{k} \lesssim \frac{1}{\gamma^k}+\frac{\sqrt{n}}{\gamma} \one_{k=1},
	\end{align} 
	for any $k\geq 1$ and any bounded deterministic vector $\uu \in \C^n$.
	 Note that for $k=1$ we also used the Cauchy-Schwarz inequality. As a consequence of (\ref{M_term}), combining with the isotropic local law in (\ref{entrywise}) and the assumption in (\ref{assump_R}), we have 
	\begin{align}\label{estimate}
		&\sum_{\mathfrak{j}=1}^{2n} \big|(G)_{\uu \mathbf{e}_{\mathfrak{j}} }\big|^k \prec n\Big( \sqrt{\frac{\rho}{n\eta}} \Big)^k+ 1+\sqrt{n} \one_{k=1}, \qquad \sum_{\mathfrak{j}=1}^{2n} \big|(G_1 A G_2)_{\uu \mathbf{e}_{\mathfrak{j}}}\big|^k\prec  n \mathcal{E}_b^k+ \frac{1}{\gamma^k}+\frac{\sqrt{n}}{\gamma} \one_{k=1}, 
	\end{align}
	 for any $k\geq 1$ and any bounded deterministic vector $\uu \in \C^n$. We remark that the above estimates hold simultaneously for all $t\geq 0$. This follows by a standard grid argument using the Lipschitz continuity of $G_t=(H_t-\ii \eta)^{-1}$ for $t\in [0, T_0]$ with $T_0=100\log n$, while for $t\geq T_0$, $G_t$ is almost the resolvent of a Ginibre ensemble up to an error $O_{\prec}(n^{-10})$. Without specific mentioning, the estimates in the following hold uniformly for all $t$.

	Now we are ready to look at the most critical third order terms given by (\ref{higher_xy}), \ie
	\begin{align}\label{third_xy}
		\mathcal{T}^{(3)}_3=&\mathcal{T}^{(1,1,1)}_3+\mathcal{T}^{(1,2)}_3+\mathcal{T}^{(3)}_3\nonumber\\
		=&\frac{1}{n^{3/2}} \sum_{a,B} \E\Big[  \big(\partial R^{xy}_t\big)^3 \big(R^{xy}_t\big)^{2m-3} + 
		\partial R^{xy}_t\big(\partial^2 R^{xy}_t\big)\big(R^{xy}_t\big)^{2m-2} +(\partial^3 R^{xy}_t) \big(R^{xy}_t\big)^{2m-1}\Big].
	\end{align}
	Using (\ref{rule_xy}) and (\ref{estimate}) for $k=3$, the first part $\mathcal{T}^{(1,1,1)}_3$ in (\ref{third_xy}) is bounded by
	\begin{align}\label{T11}
		\big|\mathcal{T}^{(1,1,1)}_3\big|	 \lesssim& \frac{1}{n^{3/2}}\sum_{a,B} \E \Big[\Big(\Big| (G_1)_{\xx \ea} (G_1AG_2)_{\eB \yy}\Big|^{3}+\Big|(G_1AG_2)_{\xx \ea} G_{\eB \yy} \Big|^{3} \Big) \big|R^{xy}_t\big|^{2m-3}\Big]\nonumber\\
		\prec& \Big(\frac{\rho^{3/2}}{n\eta^{3/2}}+\frac{1}{\sqrt{n}} \Big) \Big( \mathcal{E}_{b}^3 +\frac{1}{n} \gamma^{-3} \Big)\E \big|R^{xy}_t\big|^{2m-3}\nonumber\\
		\lesssim& \Big(1+\frac{1}{n \eta^{3/2+\frac{\epsilon}{2}}} \Big) \Big(  \E \big|R^{xy}_t\big|^{2m} +O_\prec\big(\mathcal{E}_{b'}\big)^{2m}\Big),
	\end{align}
 where in the last step we used Young's inequality and the following bounds from (\ref{EE})- (\ref{para_bound}):
\begin{align}\label{eq_EE}
	\frac{\mathcal{E}_b}{\mathcal{E}_{b'}} =\Big( \frac{\rho \gamma}{\eta}\Big)^{\frac{b'-b}{2}} \lesssim \Big(\frac{1}{\rho \eta} \Big)^{\frac{\theta}{2}}, \qquad \frac{\gamma^{-1}}{\mathcal{E}_{b'}} \lesssim \frac{\sqrt{n}\eta}{\sqrt{\gamma}} \lesssim \sqrt{n\eta \rho},
\end{align}
with $b'=\min\big\{b+\theta,~1\big\}$, $0<\theta\leq \epsilon/10$, using that $\gamma \lesssim \rho^{-2}$ from (\ref{parameter}) and $\gamma \gtrsim \eta/\rho$.

We next consider the second part $\mathcal{T}^{(1,2)}_3$ in (\ref{third_xy}). Recall the second derivative terms for instance in (\ref{second}).  One of the most critical term (containing the least off-diagonal entries of $G$) is given by
\begin{align}\label{worst_third}
	(*):=\frac{1}{n^{3/2}} \sum_{a,B} \E\Big[   
(G_1)_{\xx \ea} (G_1AG_2)_{\eB \yy} (G_1)_{\xx \eB} (G_1)_{aa} (G_1AG_2)_{\eB \yy} \Big) \big|R^{xy}_t\big|^{2m-2} \Big].
\end{align}
Using (\ref{entrywise}) and by a Cauchy-Schwarz inequality, such term is bounded by 
	\begin{align}\label{schwarz}
		|(*)|\prec  & \frac{\rho^{3/2}}{n \sqrt{\eta}} \E \Big[ \sqrt{\sum_{B} \big|(G_1)_{\xx \eB}\big|^2 \sum_{B}  \big|(G_1 A G_2)_{\eB \yy}\big|^4} \big|R^{xy}_t\big|^{2m-2} \Big]\nonumber\\
		\prec &\rho^2 \Big( \frac{\mathcal{E}^2_b}{\sqrt{n}\eta} +\frac{1}{n \eta \gamma^2}\Big) \E \big|R^{xy}_t\big|^{2m-2} \nonumber\\
		\lesssim & \Big(1+\frac{1}{\sqrt{n} \eta^{1+\frac{\epsilon}{2}}} \Big) \Big(  \E \big|R^{xy}_t\big|^{2m} +O_\prec\big(\mathcal{E}_{b'}\big)^{2m}\Big),
	\end{align}
where we used (\ref{estimate}) for $k=2,4$ in the second step and (\ref{eq_EE}) in the last step. The other terms in $\mathcal{T}^{(1,2)}_3$ can be bounded similarly as in (\ref{schwarz}) even with less efforts. Then we conclude that 
	\begin{align}\label{T22}
\Big|\mathcal{T}^{(1,2)}_3\Big| \lesssim \Big(1+\frac{1}{\sqrt{n} \eta^{1+\frac{\epsilon}{2}}} \Big) \Big(  \E \big|R^{xy}_t\big|^{2m} +O_\prec\big(\mathcal{E}_{b'}\big)^{2m}\Big).
	\end{align}
	We finally consider the last part $\mathcal{T}^{(3)}_3$ in (\ref{third_xy}). Recall the third derivative terms in (\ref{third}). We only look at one of the worst terms (with the least off-diagonal entries of $G$), \ie
	\begin{align}\label{worst}
	(**):=&\Big|\frac{1}{n^{3/2}} \sum_{a,B} \E\Big[  (G_1)_{\xx \ea} (G_1)_{aa} (G_1)_{BB}  (G_1AG_2)_{\eB \yy} \big(R^{xy}_t\big)^{2m-1}\Big]\Big|\nonumber\\
	\prec &\Big|\frac{1}{n^{3/2}} \sum_{a,B} \E\Big[  (G_1)_{\xx \ea} (G_1-M_1)_{aa} (G_1-M_1)_{BB}  (G_1AG_2)_{\eB \yy} \big(R^{xy}_t\big)^{2m-1}\Big]\Big|\nonumber\\
	&\qquad \qquad   + \Big(\frac{\rho^{3/2}}{\sqrt{n\eta}} \mathcal{E}_b+\frac{\rho^2}{\sqrt{n} \gamma} +\frac{\rho^2}{n\eta \gamma}\Big) \E  \big|R^{xy}_t\big|^{2m-1},
	\end{align}
where we used the following isotropic bounds 
\begin{align}\label{entry}
	\Big| \frac{1}{\sqrt{n}} \sum_{a} \big( G\big)_{\xx \ea} \Big| \prec 1+\sqrt{\frac{\rho}{n\eta}}, \qquad \Big| \frac{1}{\sqrt{n}} \sum_{B} \big(G_1 A G_2\big)_{\eB \yy} \Big| \prec \frac{1}{\gamma} + \mathcal{E}_b,
\end{align}
from (\ref{entrywise}), the assumption (\ref{assump_R}) and (\ref{M_term}). Using (\ref{estimate}) for $k=1$, we obtain that 
\begin{align}\label{T33}
	|(**)|\prec& \Big(\frac{\rho^{3/2}}{n\eta^{3/2}} \mathcal{E}_b+\frac{\rho^{3/2}}{\sqrt{n\eta}} \mathcal{E}_b+\frac{\rho^2}{\sqrt{n} \gamma} +\frac{\rho^2}{n\eta \gamma}\Big) \E  \big|R^{xy}_t\big|^{2m-1}\nonumber\\ 
	\lesssim	&   \Big(1+\frac{1}{\sqrt{n} \eta^{1+\frac{\epsilon}{2}}} \Big) \Big(  \E \big|R^{xy}_t\big|^{2m} +O_\prec\big(\mathcal{E}_{b'}\big)^{2m}\Big),
	\end{align}
	where we also used (\ref{eq_EE}) and Young's inequality in the last step. The other terms in $\mathcal{T}^{(3)}_3$ can be handled similarly as above even with less efforts. Hence we obtain the same upper bound for $\mathcal{T}^{(3)}_3$ as in (\ref{T22}).
\nc
	
To sum up (\ref{T11})-(\ref{T33}), all the third order terms in (\ref{third_xy}) are bounded by
	\begin{align}
		\big|\mathcal{T}_3\big|	
		\lesssim & \Big(1+\frac{1}{\sqrt{n} \eta^{1+\frac{\epsilon}{2}}} \Big) \Big(  \E \big|R^{xy}_t\big|^{2m} +O_\prec\big(\mathcal{E}_{b'}\big)^{2m}\Big).
	\end{align}
	Note that the higher order terms in the cumulant expansion in (\ref{higher_xy}) with $k\geq 4$ can be bounded similarly using the differentiation rule (\ref{rule_xy}) and the bound (\ref{estimate}) with much less efforts, since we gain an additional $n^{-1/2}$ from the moment condition in (\ref{eq:hmb}), so we omit the details. Therefore, we conclude
	\begin{align}\label{gft_xy_imp}
		\Big|\frac{\dd \E |R^{xy}_t|^{2m}}{\dd t}\Big| \lesssim & \Big(1+\frac{1}{\sqrt{n} \eta^{1+\frac{\epsilon}{2}}} \Big)
		\Big(  \E \big|R^{xy}_t\big|^{2m} +O_\prec\big(\mathcal{E}_{b'}\big)^{2m}\Big).
	\end{align}
	This proves the first inequality in (\ref{gft_1}).

	The proof of the second inequality (\ref{gft_2}) is similar to (\ref{gft_1}), actually with less efforts, so we only sketch the modifications. We perform similar arguments on $S^{xy}_t$ as in (\ref{cumulant_app0})-(\ref{gft_xy_imp}). The  corresponding differentiation rule in (\ref{rule_xy}) is then replaced with
	\begin{align}\label{rule_S}
		\frac{\partial S^{xy}_t}{\partial h_{aB}}=&-\big((G_1)_{\xx \ea} (G_1A_1G_1A_2G_2)_{\eB \yy}+(G_1A_1G_2)_{\xx \ea}(G_2A_2G_1)_{\eB \yy}+(G_1A_1G_2AG_1)_{\xx \ea} (G_1)_{\eB \yy}\big).
	\end{align}
	Similarly to (\ref{estimate}),  we obtain from the assumption (\ref{assump_S}) and the deterministic bound (\ref{M_3_bound}) that
	\begin{align}\label{estimate_S}
		\sum_{\mathfrak{j}=1}^{2n} \big|(G_1 A_1 G_2 A_2 G_1)_{\uu \mathbf{e}_{\mathfrak{j}}}\big|^k\prec  n \wt{\mathcal{E}}_b^k+ \frac{1}{(\eta\gamma)^k}+\frac{\sqrt{n} }{\eta\gamma}\one_{k=1}.
	\end{align}
	Moreover, we have the analgue of (\ref{eq_EE}), \ie for  $b'=\min\big\{b+\theta,~1\big\}$ with $0<\theta\leq \epsilon/10$, 
	\begin{align}\label{eq_EE2}
		\frac{\wt{\mathcal{E}}_b}{\wt{\mathcal{E}}_{b'}} =\Big( \frac{ \gamma}{\eta}\Big)^{b'-b} \lesssim \Big(\frac{1}{\rho^2 \eta} \Big)^{\theta}, \qquad \frac{1}{\eta \gamma \mathcal{E}_{b'}} \lesssim 1.
	\end{align}
	Using (\ref{estimate}), (\ref{rule_S})-(\ref{eq_EE2}), and repeating similar arguments above, we obtain the analog of (\ref{gft_xy_imp})
	\begin{align}\label{gft_s_imp}
		\frac{\dd \E \big|\mathcal{S}^{xy}_t\big|^{2m}}{\dd t} \lesssim & \Big(1+\frac{1}{\sqrt{n} \eta^{1+\frac{\epsilon}{2}}} \Big)
		\Big(  \E \big|\mathcal{S}^{xy}_t\big|^{2m} +O_\prec\big(\wt{\mathcal{E}}_{b'}\big)^{2m}\Big).
	\end{align}
	This proves the second inequality (\ref{gft_2}) and completes the proof of Lemma \ref{lemma_RS_gft}.
\end{proof}

\subsection{Proof of the averaged local law in (\ref{avlaw12})}\label{sec:2G_ave}

The proof of the averaged local law in (\ref{avlaw12}) is similar to the proof of (\ref{enlaw12}) in the previous subsection, with the difference that now we additionally use (\ref{enlaw12}) and Lemma~\ref{lemma:3G} as technical inputs. Note that we focus on the regime that $\gamma \geq \eta_*/\rho^*$, while for the complementary regime the desired local law follows directly from (\ref{initial_ave}).

We first establish a similar differentiation inequality as Lemma \ref{lemma_RS_gft}. Define, for brevity, the notations
\begin{align}
	R_t:=\<\big(G_1 A_1 G_2 -M^{A_1}_{12}\big) A_2 \>, \qquad G_i=G^{z_i}_t(\ii \eta_i)=(H^{z_i}_t-\ii \eta_i)^{-1}, \qquad i=1,2.
\end{align}
For any  $|z_i| \leq 1+\tau$ and $\eta_i \in \R$ satisfying $\ell =\min_{i=1}^2 \{|\eta_i| \rho_i\} \geq n^{-1+\epsilon}$ and $\gamma \gtrsim \eta_*/ \rho^* $, we claim
\begin{align}\label{gft_RR}
	\frac{\dd \E |{R_t}|^{2m}}{\dd t}\lesssim &\Big(1+\frac{1}{\sqrt{n}\eta_*} \Big)
	\left(  \E \big|{R_t}\big|^{2m} +O_\prec\Big(\frac{1}{\sqrt{n\ell} \gamma}\Big)^{2m} \right).
\end{align}
Assume (\ref{gft_RR}) is proved. Then the desired local law (\ref{avlaw12}) follows from using similar zig-zag arguments as in the proof of  Proposition \ref{prop_iteration} above. The only difference is to replace Proposition \ref{prop:isoflow} with Proposition \ref{prop:aveflow},  and replace the global law in (\ref{eq:englobal})  with 
its averaged version from \cite[Theorem 5.2]{complex_CLT}, \ie
\begin{equation}
	\label{eq:aveglobal}
	\Big| \big\langle \big(G^{z_1}(\ii \eta_1)A_1G^{z_2}(\ii \eta_2)-M_{12}^{A_1}\big) A_2 \big\rangle \Big|\prec \frac{1}{n}, \qquad \mathrm{dist}(\ii \eta, \mathrm{supp}(\rho^z)) \sim 1.
\end{equation}
 This finishes the proof of (\ref{avlaw12}), hence the proof of Theorem \ref{thm:2G}, modulo the differentiation inequality in (\ref{gft_RR}) that we will prove below.  \qed
\medskip

\begin{proof}[Proof of (\ref{gft_RR})]
For notational simplicity, we assume that $|\eta_1| \sim |\eta_2| \sim \eta$ and $\rho_1 \sim \rho_2 \sim \rho$. The proof for general $\eta_i$ and $\rho_i$ is essentially the same with $\eta$, $\rho$ replaced by $\eta_*$, $\rho^*$ in the following.
We apply It\^{o}'s formula to $|{R_t}|^{2m}$ and perform the cumulant expansions as in (\ref{cumulant_app0})-(\ref{higher_xy}).  As explained below (\ref{higher_xy}), for notational brevity, we will use $\partial$ to indicate either $\partial/\partial h_{aB}$ or $\partial/\partial h_{Ba}$, and with a slight abuse of notation $R_t$ indicates either the complex number $R_t$ or $\overline{R_t}$. By a direct computation, we obtain that
\begin{align}\label{deri_RR}
	\frac{\partial R_t}{\partial h_{aB}}  =
	&-\frac{1}{n} \Big((G_1A_1G_2A_2G_1)_{Ba}+(G_2A_2G_1A_1G_2)_{Ba}\Big)=O_\prec \Big(\frac{1}{n\eta \gamma}\Big),
\end{align}
where we also used \eqref{M_3_bound} and  (\ref{eq:3giso}) in the last step.  In general for $s\geq 2$, each partial derivative $\partial^{s} R_t$ contains $s+2$ Green functions. The resulting terms have two forms:  1) a product of $s-1$ Green function entries $\prod_{i=1}^{s-1} (G)_{ \bm u_i, \bm v_i}$ and one factor $(G_1A_1G_2A_2 G_1)_{\bm u_{s}  \bm v_{s}}$; 2) a product of $s-2$ Green function entries $\prod_{i=1}^{s-2} (G)_{ \bm u_i, \bm v_i}$, one factor $(G_1A_1G_2)_{\bm u_{s-1}  \bm v_{s-1}}$, and another factor $(G_2A_2G_1)_{\bm u_{s}  \bm v_{s}}$, with $\bm u_i, \bm v_i~(1\leq i\leq s)$ chosen from $\{\ea, \eB\}$. Note that both choices of $\ea$ and $\eB$ appear exactly $s$ times.  For instance, we list below several examples of $\partial^2 R_t$  and $\partial^3 R_t$, \ie
\begin{align}
	&\partial^2 R_t: \qquad 	 \frac{1}{n} (G_1A_1  G_2A_2G_1)_{aa}  (G_1)_{BB}, \qquad \frac{1}{n}(G_1A_1G_2)_{aa} (G_2A_2G_1)_{BB} ;\label{second_R}\\
	&	\partial^3 R_t: \qquad \frac{1}{n} (G_1A_1  G_2A_2G_1)_{aB} (G_1)_{aa} (G_1)_{BB}, \qquad \frac{1}{n} (G_1A_1  G_2A_2G_1)_{aa} (G_1)_{BB} (G_1)_{aB},\label{third_R}\\
	&\quad \qquad \qquad \frac{1}{n}(G_1A_1G_2)_{aa} (G_2A_2G_1)_{BB} (G_1)_{aB}, \qquad \frac{1}{n}(G_1A_1G_2)_{aB} (G_2A_2G_1)_{aa} (G_1)_{BB}.
\end{align}
 Note that from the 1G local law (\ref{entrywise}) and (\ref{Mmatrix}), we have 
\begin{align}\label{G_bound}
	\max_{a=1}^{n} \max_{B=n}^{2n} \Big\{ |G_{aa}|, |G_{BB}|, |G_{aB}|,|G_{Ba}|\Big\}  \lesssim  \rho+ \one_{B=a+n},
\end{align} 
for $n\eta \rho \gtrsim  n^{\epsilon}$. Further using 2G local law (\ref{enlaw12}) and (\ref{M_bound}), we have
\begin{align}\label{GG_bound}
	\max_{1\leq \mathrm{i,j}\leq 2n} \Big\{ \Big|(G_1A_1G_2)_{\mathfrak{ij}}\Big|, \Big|(G_2A_2G_1)_{\mathfrak{ij}} \Big| \Big\}  \prec  \frac{1}{\gamma}+\frac{1}{\sqrt{n\gamma}\eta},
\end{align} 
while  the 3G bound (\ref{eq:3giso}) and (\ref{M_3_bound}) imply that
\begin{align}\label{GGG_bound}
	\max_{1\leq \mathrm{i,j}\leq 2n} \Big\{ \Big|(G_1A_1G_2 A_2 G_1)_{\mathfrak{ij}}\Big|, \Big|(G_2A_2G_1 A_1 G_2)_{\mathfrak{ij}}\Big|\Big\}  \prec  \frac{1}{\eta \gamma}.
\end{align} 
Therefore, we obtain that
\begin{align}\label{deff_rule}
	\Big| \frac{\partial^s {R_t}}{\partial h^{s_1}_{aB}\partial h^{s_2}_{Ba}} \Big| \prec\frac{\rho^{s-1}}{n\eta \gamma}+\frac{\rho^{s-2}}{n\gamma^2}+\frac{\rho^{s-2}}{n^2\gamma \eta^2} \lesssim \frac{\rho^{s}}{n\eta \rho\gamma}, \qquad s=s_1+s_2\geq 1,
\end{align}
where we also used that $\gamma \gtrsim \frac{\eta}{\rho}$ and $n\eta \rho \gtrsim  n^{\epsilon}$. 
Thus using (\ref{deff_rule}), the $k$-th order term ${\mathcal{T}}_k^{(s_1,\cdots,s_l)}$ with $s_1+\cdots+s_l=k$ given as in (\ref{higher_xy})  is bounded by
\begin{align}\label{K_high_term}
	\big|{\mathcal{T}}_k^{(s_1,\cdots,s_l)}\big| \prec & \frac{\rho^{k}}{n^{\frac{k-4}{2}}} \Big(\frac{1}{n\eta \rho \gamma}\Big)^l \E\big| {R_t}\big|^{2m-l} \nonumber\\
	\lesssim &
	\frac{\rho^{k}}{n^{\frac{k-4}{2}}}     \Big( \frac{1}{\sqrt{n\eta \rho}}\Big)^{l} \left( \E\big| {R_t}\big|^{2m} +O_\prec\Big(\frac{1}{\sqrt{n\eta \rho} \gamma}\Big)^{2m} \right).
\end{align}
Hence for any $k\geq 4$, we obtain that
\begin{align}\label{temp11}
	\big|{\mathcal{T}}_k^{(s_1,\cdots,s_l)}\big| \lesssim   \E \big|{R_t}\big|^{2m} +O_\prec\Big(\frac{1}{\sqrt{n\eta \rho} \gamma}\Big)^{2m}.
\end{align}
where we also used $n\eta \rho \gg 1$ and $\rho \lesssim 1$. For the third order terms with $k=3$ and $l\geq 2$, we also have
\begin{align}\label{temp111}
	\big|{\mathcal{T}}_k^{(s_1,\cdots,s_l)}\big| \lesssim  \frac{\rho^2}{\sqrt{n}\eta} \left(  \E \big|{R_t}\big|^{2m} +O_\prec\Big(\frac{1}{\sqrt{n\eta \rho} \gamma}\Big)^{2m} \right).
\end{align}
We next focus on the only remaining term ${\mathcal{T}}^{(3)}_3$ with $k=3$ and $l=1$. Recall the third derivative terms in (\ref{third_R}). By a direct computation, ${\mathcal{T}}^{(3)}_3$ is bounded by 
\begin{align}\label{tempp1}
	\big|{\mathcal{T}}^{(3)}_3 \big|=&\frac{\rho^2}{n^{5/2}} \sum_{a,B} \E\Big[ \Big(\big|(G_1 A  G_2 A^* G_1)_{Ba}\big|+\big|(G_2 A G_1 A^* G_2)_{Ba}\big| \Big) \big|{R_t}\big|^{2m-1}\Big]\nonumber\\
	&\qquad +O_\prec \Big( \frac{\rho^{3/2}}{n\eta^{3/2}\gamma}\E\big|{R_t}\big|^{2m-1} \Big)+O_\prec \Big(\frac{\rho}{n\gamma^2}\E\big|{R_t}\big|^{2m-1} \Big).
\end{align}
To bound the error terms in the last line above, we used the estimates (\ref{G_bound})-(\ref{GGG_bound}), together with that $|G_{aB}| \prec \sqrt{\frac{\rho}{n\eta}}+\one_{a+n=B}.$
Using the Cauchy-Schwarz inequality and Ward identity, the leading term on first line above is bounded by
\begin{align}
	&\frac{\rho^2}{n^{5/2}} \sum_{a,B} \E\Big[ \Big(\big|(G_1 A  G_2 A^* G_1)_{Ba}\big|+\big|(G_2 A G_1 A^* G_2)_{Ba}\big| \Big) \big|{R_t}\big|^{2m-1}\Big]\nonumber\\
	\lesssim &\frac{\rho^2}{n^{3/2}} \E \Big[ \sqrt{\frac{\<\Im G_1 A G_2 A^* \Im G_1 A G_2^* A^*\>}{\eta_1^2}}\big|{R_t}\big|^{2m-1}\Big]\prec \frac{\rho^2}{n^{3/2}\eta^2 \sqrt{\gamma}}\E\big|{R_t}\big|^{2m-1},
\end{align}
where in the last step, we also used that
\begin{align}
	\<G_1 A G_2 A^* G_1 A G_2 A^*\> =\frac{1}{n} \sum_{\mathfrak{i,j}=1}^{2n} (G_1 AG_2 A^*)_{\mathfrak{ij}} (G_1 AG_2 A^*)_{\mathfrak{ji}} \prec \frac{1}{\eta^2 \gamma}+\frac{1}{\gamma^2} \lesssim \frac{1}{\eta^2\gamma},
\end{align}
which follows from (\ref{enlaw12}), (\ref{M_term}) for $k=2$, and that $\gamma \gtrsim \frac{\eta}{\rho} \gtrsim \eta$. Therefore we obtain from (\ref{tempp1})
\begin{align}\label{T3}
	\big|{\mathcal{T}}^{(3)}_3 \big| \prec & \Big( \frac{\rho^2}{n^{3/2} \eta^2 \sqrt{\gamma}} +\frac{\rho^{3/2}}{n\eta^{3/2}\gamma} +\frac{\rho}{n\gamma^2}\Big) \E\big|{R_t}\big|^{2m-1} \nonumber\\ 
	\lesssim & \Big(1+\frac{1}{\sqrt{n}\eta}\Big) \left(  \E\big|{R_t}\big|^{2m}+O_\prec\Big( \frac{1}{\sqrt{n\eta \rho}\gamma} \Big)^{2m} \right),
\end{align}
where we also used that $\frac{\eta}{\rho} \lesssim \gamma \lesssim \rho^{-2}$ from (\ref{parameter}) and $n\eta \rho \gtrsim  n^{\epsilon}$.
Combining (\ref{T3}) with (\ref{temp11}) and (\ref{temp111}), 
 we have proved the differentiation inequality in (\ref{gft_RR}). \nc
\end{proof}

\subsection{Proof of Proposition \ref{prop_initial} without $|z_1-z_2|$-decorrelation}\label{sec:nogain}

In this part, we aim to prove the local laws for $\ga(\ii \eta_1)A \gb(\ii \eta_2)$ without gaining from the $|z_1-z_2|$-decorrelations. The proof is similar to the proofs in the previous subsections with less efforts, so we only sketch it.

\begin{proof}[Proof of Proposition \ref{prop_initial}]		
Note that the averaged local law  (\ref{initial_en}) has already been proved in \cite[Theorem 5.2]{complex_CLT}.  
Moreover, the last statement (\ref{initial_3G}) follows directly from using the bound (\ref{M_3_bound}) and that 
\begin{align}\label{initial_S}
	\Big|\big(G^{z_1}(\ii \eta_1)A_1G^{z_2}(\ii \eta_2)A_2G^{z_1}(\ii \eta_1)\big)_{\xx \yy}\Big| \lesssim \sqrt{\frac{\big(G^{z_1} A_1 \Im G^{z_2} A_1^* (G^{z_1})^*\big)_{\xx \xx} \big( \Im G^{z_1}\big)_{\yy \yy}}{\eta_1 \eta_2}} \lesssim \frac{1}{\eta_*^2}
\end{align}
by Schwarz inequality and Ward identity. 
So we will only consider the isotropic local law (\ref{initial_ave}). 
	As explained in the proof of Proposition \ref{prop_iteration}, it suffices to derive a similar differentiation inequality as in Lemma \ref{lemma_RS_gft}, \ie 
	\begin{align}\label{gft_xy}
		\frac{\dd \E |R^{xy}_t|^{2m}}{\dd t}\lesssim &\Big( 1+\frac{1}{\sqrt{n} \eta_*} \Big)
		\left(  \E \big|R^{xy}_t\big|^{2m} +O_\prec\Big(\frac{\sqrt{\rho}}{\sqrt{n} \eta^{3/2}} \Big)^{2m} \right).
	\end{align}
The proof uses the same strategy as Lemma \ref{lemma_RS_gft}. Actually it is much easier since we do not want to gain from $|z_1-z_2|$-decay. Again we also assume $|\eta_1| \sim |\eta_2| \sim \eta$ and $\rho_1 \sim \rho_2 \sim \rho$ to simplify the notations.

	We repeat the same cumulant expansions as in (\ref{cumulant_app0})-(\ref{higher_xy}). Recall the differentiation rules in (\ref{rule_xy})-(\ref{third}) and the third order terms in (\ref{third_xy}).  From \cite[Lemma D.2]{gumbel} we know
	\begin{align}\label{ineqqq}
		\big|(G_1A_1 G_2)_{\uu \vv} \big|  \prec \frac{\rho}{\eta}, \qquad \big|(G_1A_1G_2A_2G_1)_{\uu \vv} \big| \prec \frac{\rho}{\eta^2},
	\end{align}
	for any deterministic unit vectors $\uu, \vv$.  Then (\ref{estimate}) is now replaced with the following	\begin{align}\label{ineqqq0}
		\sum_{\mathfrak{j}=1}^{2n} \big|(G)_{\uu \mathbf{e}_{\mathfrak{j}} }\big|^k \prec n\Big( \sqrt{\frac{\rho}{n\eta}} \Big)^k+ 1+\sqrt{n} \one_{k=1}, \qquad \sum_{{\mathfrak{i}}=1}^{2n} \Big| (G_1 A G_2)_{ \mathbf{e}_{\mathfrak{i}} \vv} \Big|^k \prec  \frac{\rho^{k-1}}{\eta^{k+1}}+ \sqrt{\frac{n \rho}{\eta^3}}\one_{k=1},
	\end{align}
where the second inequality follows from (\ref{ineqqq}) and the Ward identity.
	
	Using (\ref{ineqqq0}) for $k=3$, the first term $\mathcal{T}^{(1,1,1)}_3$ in (\ref{third_xy}) is bounded by
	\begin{align}\label{argument}
		\big|\mathcal{T}^{(1,1,1)}_3\big|	 \lesssim& \frac{1}{n^{3/2}}\sum_{a,B} \E \Big[\Big(\Big| (G_1)_{\xx \ea} (G_1AG_2)_{\eB \yy}\Big|^{3}+\Big|(G_1AG_2)_{\xx \ea} G_{\eB \yy} \Big|^{3} \Big) \big|R^{xy}_t\big|^{2m-3}\Big]\nonumber\\
		\prec &  \Big( \frac{\rho^{7/2}}{n^2\eta^{11/2}} +\frac{\rho^2}{n^{3/2} \eta^{4}} \Big)  \E \big|R^{xy}_t\big|^{2m-3}.
	\end{align}
	For the second term $\mathcal{T}^{(1,2)}_3$ in (\ref{third_xy}), it then suffices to check the worst term in (\ref{worst_third}), \ie
	\begin{align}
	(*)=&  \frac{1}{n^{3/2}}\sum_{a,B} \E\Big[   
		(G_1)_{\xx \ea} (G_1AG_2)_{\eB \yy} (G_1)_{\xx \eB} (G_1-M_1)_{aa}  (G_1AG_2)_{\eB \yy} \big|R^{xy}_t\big|^{2m-2} \Big]\nonumber\\
		&\qquad  +O_\prec\Big(\frac{\rho^2}{n\eta^3}\Big)\E \big|R^{xy}_t\big|^{2m-2} 
	\prec  \Big( \frac{\rho^2}{n^{3/2}\eta^4} +\frac{\rho^2}{n\eta^3}\Big) \E \big|R^{xy}_t\big|^{2m-2},
	\end{align}
	where we used the first isotropic bound in (\ref{entry}), (\ref{ineqqq}), and  (\ref{ineqqq0}).  For the last term $\mathcal{T}^{(3)}_3$ in (\ref{third_xy}), we focus on the worst term as discussed in (\ref{worst}). Using (\ref{ineqqq}), (\ref{ineqqq0}) and the first isotropic bound in (\ref{entry}), such term is bounded by
	\begin{align}
		(**)=&\frac{1}{n^{3/2}} \sum_{a,B} \E\Big[  (G_1)_{\xx \ea} (G_1-M_1)_{aa} (G_1)_{BB}  (G_1AG_2)_{\eB \yy} \big(R^{xy}_t\big)^{2m-1}\Big]\nonumber\\
		&\qquad +O_\prec\Big(\frac{\rho^{5/2}}{\sqrt{n} \eta^{3/2}}\Big)  \E \big|R^{xy}_t\big|^{2m-1} \prec \Big(\frac{\rho^{3/2}}{n \eta^{5/2}} +\frac{\rho^{5/2}}{\sqrt{n} \eta^{3/2}} \Big) \E \big|R^{xy}_t\big|^{2m-1}.
	\end{align}
	 
		Suming up and using Young's inequality, all the third order terms in (\ref{third_xy}) are bounded by
	\begin{align}
		|\mathcal{T}_3| 
			\lesssim& \Big( 1+\frac{1}{\sqrt{n} \eta} \Big)
		\left(  \E \big|R^{xy}_t\big|^{2m} +O_\prec\Big(\frac{\sqrt{\rho}}{\sqrt{n} \eta^{3/2}} \Big)^{2m} \right).
	\end{align}
	Moreover, all the higher order terms $(k\geq 4)$ in (\ref{higher_xy}) can be bounded similarly with less efforts, since we gain additional $n^{-1/2}$ from the moment condition in (\ref{eq:hmb}). Hence we conclude with (\ref{gft_xy}) and finish the proof of Proposition \ref{prop_initial}.
\end{proof}

\subsection{Proof of Theorem \ref{thm:2G_edge}:  local law for $\Im G^{z_1} A \Im G^{z_2}$}\label{sec:im}
In this section, we will prove Theorem \ref{thm:2G_edge}, showing an improvement close to the edge of the spectrum when the resolvent $G$ is replaced by $\Im G$, using Proposition \ref{prop:aveflowim} and Theorem \ref{thm:2G}, Lemma \ref{lemma:3G} as technical inputs.  
\begin{proof}[Proof of Theorem \ref{thm:2G_edge}]
	From (\ref{rho}) we know that $\rho_i \gtrsim \eta_i^{1/3}$ for $|z_i| \leq 1+ n^{-1/2+\tau}$ and $\ell:=\min_{i=1}^2\{ |\eta_i| \rho_i \} \gtrsim n^{-1+\epsilon}$ with   $0<\tau< \epsilon/10$. This further implies that $\rho_*=\min_{i=1}^{2}\rho_i \gtrsim n^{-1/4-\epsilon/4}$. To prove Theorem \ref{thm:2G_edge}, we will focus on the regime $|z_1-z_2|^2 \gtrsim \rho_1 \eta_1 +\rho_2 \eta_2$,  which also implies that $\gamma \gtrsim \frac{\eta_*}{\rho^*}$  from (\ref{parameter}). For the complementary regime $|z_1-z_2|^2 \lesssim \rho_1 \eta_1+\rho_2 \eta_2$, the desired local law follows directly from Proposition \ref{prop_initial} without gaining from the  $|z_1-z_2|$-decay.

	We define the following short-hand notations
	\begin{align}\label{R_t}
		\wh{R_t}:=\<\big(\Im G_1 A_1 \Im G_2 -\wh{M}^{A}_{12}\big) A_2 \>, \qquad G_i=G^{z_i}_t(\ii \eta_i)=(G^{z_i}_t-\ii \eta_i)^{-1}, \quad i=1,2,
	\end{align}
	with the time dependent matrix $H^{z}_t$ defined in (\ref{flow}). Then we claim the analogue of Lemma \ref{lemma_RS_gft}, \ie
	\begin{align}\label{gft_im}
		\frac{\dd \E |\wh{R_t}|^{2m}}{\dd t}=&C'_n
		\left(  \E \big|\wh{R_t}\big|^{2m} +O_\prec\Big( \frac{1}{\sqrt{n\ell }}\frac{\rho_1 \rho_2}{\wh \gamma } \Big)^{2m} \right), \qquad C'_n:=1+\frac{1}{\sqrt{n} \eta_* \rho_*},
	\end{align}
	for any $|z_i|\leq 1+\tau$, $n\ell \gtrsim n^{\epsilon}$, $\gamma \gtrsim \frac{\eta_*}{\rho^*}$, and $|\rho_i| \gtrsim n^{-c_*}$ with $0\leq c_*\leq 1/4-\epsilon/4$. 

	 We  will prove (\ref{gft_im}) below. Assuming it now, similarly to the zig-zag arguments used in the proof of Proposition \ref{prop_iteration}, we  use (\ref{gft_im}) and Proposition \ref{prop:aveflowim} iteratively to prove (\ref{avlaw12_im}). In the first step, for any fixed $|z^{(1)}_i|\leq 1+\tau$, $|\eta^{(1)}_i|\gtrsim \eta^{(1)}_*:= n^{-\frac{1}{4}}$ with $\rho^{(1)}_i \sim \rho_i$,  we choose a small constant $T^{(1)} \sim 1$ so that $\mathrm{dist}(\ii \eta_i(0), \mathrm{supp}(\rho_i(0))) \gtrsim 1$ from  Lemma \ref{lem:ODEtheo}. \nc  Then the global law (\ref{eq:aveglobal}) and  Proposition \ref{prop:aveflowim} yield the local law (\ref{avlaw12_im}) at time $t=T^{(1)}$. Using (\ref{gft_im}), $\rho_*\gtrsim n^{-1/4}$, and similar arguments as in (\ref{Gronwall}), the Gaussian component $T^{(1)}\sim 1$ can be removed, \ie
	 \begin{align}
	 	\big|\wh{R_{0}}\big| \prec \frac{1}{\sqrt{n\ell}} \frac{\rho_1\rho_2}{\wh\gamma}, \qquad \qquad  |\eta_i| \gtrsim \eta^{(1)}_*=n^{-\frac{1}{4}}.
	 \end{align}
	
	In general for any $k\geq 1$, we define the scales recursively by
	\begin{align}\label{eta_k_im}
		\eta_*^{(k)}:=n^{-a_{k}}, \qquad a_{k}=a_{k-1}+\frac{1}{2}-2c_*, \qquad a_1=\frac{1}{4},
	\end{align}
	recalling that $\rho_*=n^{-c_*}$ with $0\leq c_* \leq 1/4-\epsilon/4$. Clearly $a_k$ is increasing with $a_k-a_{k-1} \geq \epsilon/2$.  Replacing Proposition \ref{prop:isoflow} with Proposition \ref{prop:aveflowim}, the $k$-th iteration step is similar to the $k$-th step explained in the proof of Proposition \ref{prop_iteration}. Using the new scales defined in (\ref{eta_k_im}), we obtain the analogue of (\ref{check}), \ie
	\begin{align}\label{check_analog}
		e^{\int_{0}^{T^{(k)}} C'_n \dd t} \lesssim T^{(k)} C'_n \lesssim  n^{-a_{k-1}+c_*} \Big(1+\frac{1}{ n^{1/2-a_{k}-c_*}}) \Big)  \lesssim 1,
	\end{align}
	where we used that $0\leq c_* \leq 1/4-\epsilon/4$.  We stop the iterations when $\eta^{(k)}_*$ reaches $\eta_*=\min_{i=1}^{2}|\eta_i|\gtrsim n^{-1+\epsilon}$. In the worst case when $z_1$ is in the bulk and $z_2$ is at the edge with $c_*=n^{-1/4-\epsilon/4}$ and $\eta_*=n^{-1+\epsilon}$, the number of iterations needed is at most $O(1/\epsilon)$.  This hence  proves Theorem 3.5 modulo the proof of (\ref{gft_im}) that we will do below. \nc
\end{proof}

	\bigskip

	\begin{proof}[Proof of (\ref{gft_im})]
		We apply It\^{o}'s formula to $|\wh{R_t}|^{2m}$ and perform the cumulant expansions as in (\ref{cumulant_app0}). Note that from (\ref{eq:3giso}) we have
	\begin{align}\label{rule_im}
		\frac{\partial \wh{R_t}}{\partial_{aB}}=-\frac{1}{2 n \ii  } &\Big((G_1A_1 \Im G_2A_2 G_1)_{Ba}-(G^*_1A_1 \Im G_2A_2 G^*_1)_{Ba}\nonumber\\
		&\qquad\qquad\qquad +(G_2A_2 \Im G_1A_1 G_2)_{Ba}-(G^*_2A_2 \Im G_1A_1 G^*_2)_{Ba}\Big) = O_\prec \Big(\frac{1}{n\eta_* \gamma}\Big).
	\end{align}
We remark that, even though some of $\Im G_i$ are preserved after taking derivatives, we do not need to gain extra smallness from $\Im G_i$ for these isotropic terms.
	In general for any $s\geq 1$, the partial derivative $\partial^s \wh{R_t}$ has a similar form as described in (\ref{deri_RR})-(\ref{third_R}), with certain $G$'s replaced with $\Im G$'s. Thus the same bound as in (\ref{deff_rule}) also applies to $\partial^s \wh{R_t}$, \ie
	\begin{align}\label{deff_rule_im}
		\Big| \frac{\partial^s \wh{R_t}}{\partial h^{s_1}_{aB}\partial h^{s_2}_{Ba}} \Big| \prec \frac{(\rho^*)^{s-1}}{n\eta_* \gamma}, \qquad s=s_1+s_2\geq 1,
	\end{align}
	where we also used $n\ell \gtrsim  n^{\epsilon}$ and $\gamma \gtrsim \frac{\eta_*}{\rho^*}$. Using (\ref{deff_rule_im}), the corresponding $k$-th order cumulant expansion term as in (\ref{higher_xy}), denoted by $\wh{\mathcal{T}}_k^{(s_1,\cdots,s_l)}$ with $s_1+\cdots+s_l=k$, is bounded by
	\begin{align}\label{K_high}
		\big|\wh{\mathcal{T}}_k^{(s_1,\cdots,s_l)}\big| \lesssim & \frac{(\rho^*)^{k}}{n^{\frac{k-4}{2}}} \Big(\frac{1}{n \eta_* \rho^* \gamma}\Big)^l \E\big| \wh{R_t}\big|^{2m-l}\nonumber\\ 
		\lesssim &
		\frac{1}{n^{\frac{k-4}{2}} (\rho_*)^l}   \Big( \frac{1}{\sqrt{n\ell}} \Big)^{l} \left( \E\big| \wh{R_t}\big|^{2m} +\Big(\frac{1}{\sqrt{n\ell }}\frac{\rho_1 \rho_2}{\wh \gamma}\Big)^{2m} \right).
	\end{align}
	where we also used that $n\eta_*\rho^* \geq n\ell \gtrsim n^{\epsilon}$, $\rho^*\lesssim 1$, and $k\geq l$. Using that $\rho_*\gtrsim n^{-1/4}$, the above naive estimates are already good enough to prove (\ref{gft_im}), for the terms with $5\leq k\leq 7$ and $l\leq 2k-8$, as well as all the high order terms with $k\geq 8$. For the remaining terms, we need the finer estimates below.

	If there exists at least one $s_i$ such that $s_i=1$, then, using (\ref{rule_im}), (\ref{deff_rule_im}), the Cauchy-Schwarz inequality and the Ward identity, we obtain
	\begin{align}
		\big|\wh{\mathcal{T}}_k^{(s_1,\cdots,s_l)}\big| \prec  & \frac{(\rho^*)^{k-1}}{n^{\frac{k+2}{2}}} \Big(\frac{1}{n \eta_* \rho^* \gamma}\Big)^{l-1}  \sum_{a,B}  \E\Big[ \Big(\big|(G_1 A_1 \Im G_2 A_2 G_1)_{Ba}\big|+\big|(G_2 A_1 \Im G_1 A_2 G_2)_{Ba}\big| \Big)\big|\wh{R_t}\big|^{2m-l}\Big] \nonumber\\
		\leq & \frac{(\rho^*)^{k-1}}{n^{\frac{k-1}{2}} \eta_*} \Big(\frac{1}{n \eta_* \rho^* \gamma}\Big)^{l-1} \E\Big[ \sqrt{\<\Im G_1 A_1 \Im G_2 A_2 \Im G_1 A_2^* \Im G_2 A_1^*\>}  \big|\wh{R_t}\big|^{2m-l}\Big]\nonumber\\
		\lesssim & \frac{(\rho^*)^{k-1}}{n^{\frac{k-2}{2}} \eta_*} \Big(\frac{1}{n \eta_* \rho^* \gamma}\Big)^{l-1}  \E\Big[\<\Im G_1 A_1 \Im G_2 A_2\> |\wh{R_t}|^{2m-l} \Big],
	\end{align}
	where in the last step we also used the following inequality  (from Eq. (\ref{eq:redina})) 
	\begin{align}\label{traceab}
		\<\Im G_1 A_1 \Im G_2 A_2 \Im G_1 A_2^* \Im G_2 A_1^*\> \leq n\<\Im G_1 A_1 \Im G_2 A_2\>^2.
	\end{align}
	Recall from (\ref{R_t}) and (\ref{Im_M_bound}) that 
	$$|\<\Im G_1 A_1 \Im G_2 A_2\>| \prec |\wh{R_t}|+\frac{\rho_1\rho_2}{\wh\gamma}.$$
	Thus we conclude that, if there exists at least one  $s_i=1$, then
	\begin{align}\label{s=1}
		\big|\wh{\mathcal{T}}_k^{(s_1,\cdots,s_l)}\big| \prec  &	  \frac{(\rho^*)^{k-1}}{n^{\frac{k-2}{2}} \eta_*} \Big(\frac{1}{n \eta_* \rho^* \gamma}\Big)^{l-1}   \Big( \E|\wh{R_t}|^{2m-l+1} +\frac{\rho_1\rho_2}{\wh \gamma }\E|\wh{R_t}|^{2m-l}\Big)\nonumber\\
		\lesssim& 
		\frac{1}{n^{\frac{k-2}{2}} \eta_* (\rho_*)^{l-1}}   \Big( \frac{1}{\sqrt{n\ell}} \Big)^{l-1} \left( \E\big| \wh{R_t}\big|^{2m} +\Big(\frac{1}{\sqrt{n\ell }}\frac{\rho_1\rho_2}{\wh \gamma}\Big)^{2m} \right).
	\end{align}
	We remark that if there exist at least two indices such that $s_i=s_j=1$, then using the same strategy, we gain a bit more, \ie
	\begin{align}\label{ss=1}
		\big|\wh{\mathcal{T}}_k^{(s_1,\cdots,s_l)}\big|
		\leq  & \frac{(\rho^*)^{k-2}}{n^{\frac{k}{2}} (\eta_*)^2} \Big(\frac{1}{n \eta_*  \rho^*\gamma}\Big)^{l-2}  \E\Big[\<\Im G_1 A_1 \Im G_2 A_2\>^2 |\wh{R_t}|^{2m-l} \Big]\nonumber\\
		\lesssim& 
		\frac{1}{n^{\frac{k}{2}} (\eta_*)^2 (\rho_*)^{l-2}}   \Big( \frac{1}{\sqrt{n\ell}} \Big)^{l-2} \left( \E\big| \wh{R_t}\big|^{2m} +O_\prec\Big(\frac{1}{\sqrt{n\ell }}\frac{\rho_1\rho_2}{\wh \gamma}\Big)^{2m} \right).
	\end{align}
	Using that $\rho_* \gtrsim n^{-1/4}$, it is straightforward to check that the above finer bounds (\ref{s=1}) and (\ref{ss=1}) are good enough to prove (\ref{gft_im}), for terms with $5\leq k\leq 7$, $l\leq 2k-8$, as well as the third and fourth order terms: $\wh{\mathcal{T}}^{(1,1,1)}_3$, $\wh{\mathcal{T}}^{(1,2)}_3$, $\wh{\mathcal{T}}^{(1,1,1,1)}_4$,$\wh{\mathcal{T}}^{(1,1,2)}_4$, $\wh{\mathcal{T}}^{(1,3)}_4$. It then suffices to check the remaining terms, \ie $\wh{\mathcal{T}}^{(3)}_3$, $\wh{\mathcal{T}}^{(4)}_4$, $\wh{\mathcal{T}}^{(2,2)}_4$. 
	
	We start with the third order term $\wh{\mathcal{T}}^{(3)}_3$ with $k=3$ and $l=1$.  Recall the third derivative terms as in (\ref{third_R}). Except for the terms with $(G_1 A_1 \Im G_2 A_2 G_1)_{aB}$ that can be bounded similarly using (\ref{s=1}),  the most critical and representative terms are given by
	\begin{align}
		T_{I}:=&\frac{1}{n^{5/2}} \sum_{a,B} \E\Big[ (G_1A_1 G_2A_2 G_1)_{aa} (G_1)_{BB} (G_1)_{aB}  \big(\wh{R_t}\big)^{2m-1}\Big],\\
		T_{II}:=&\frac{1}{n^{5/2}} \sum_{a,B} \E\Big[ (G_1A_1G_2)_{aa} (G_2A_2G_1)_{BB} (G_1)_{aB}  \big(\wh{R_t}\big)^{2m-1}\Big],
	\end{align}
	and all the other terms can be bounded similarly. Using the isotropic local law (\ref{entrywise}) and (\ref{M_term}),
	\begin{align} 
		|T_{I}| \prec &\frac{1}{n^{5/2}} \sum_{a,B} \E\Big[ (G_1A_1 G_2A_2 G_1-M_{121})_{aa} (G_1-M_1)_{BB} (G_1-M_1)_{aB}  \big(\wh{R_t}\big)^{2m-1}\Big]+\frac{\rho^*}{n\eta_* \gamma } \E\big|\wh{R_t}\big|^{2m-1} \nonumber\\
		& \prec \Big(\frac{\rho^*}{n^{3/2} (\eta_*)^2 \gamma}+\frac{\rho^*}{n\eta_* \gamma} \Big) \E\big|\wh{R_t}\big|^{2m-1};\label{third_im1}\\
		|T_{II}| \prec &\frac{1}{n^{5/2}} \sum_{a,B} \E\Big[ (G_1A_1G_2-M_{12})_{aa} (G_2A_2G_1-M_{12})_{BB} (G_1-M_1)_{aB}  \big(\wh{R_t}\big)^{2m-1}\Big] +\frac{1}{(n\gamma)^{3/2}\eta_*}\E\big|\wh{R_t}\big|^{2m-1}\nonumber\\
		&\prec \left(\frac{\sqrt{\rho^*}}{n^2 (\eta_*)^{5/2} \gamma}+\frac{1}{(n\gamma)^{3/2}\eta_*}\right) \E\big|\wh{R_t}\big|^{2m-1},\label{third_im2}	
	\end{align}
	where we also used (\ref{M_bound}), (\ref{M_3_bound}) and (\ref{M_term}) to bound the $M$-terms. Therefore, we obtain that
	\begin{align}
		\big|\wh{\mathcal{T}}^{(3)}_3\big| 
		\prec &  \Big(\frac{\rho^*}{n^{3/2} (\eta_*)^2 \gamma}+\frac{\rho^*}{n\eta_*\gamma } +\frac{\sqrt{\rho^*}}{n^2 (\eta_*)^{5/2} \gamma}+\frac{1}{(n\gamma)^{3/2}\eta_*}\Big) \E\big|\wh{R_t}\big|^{2m-1}\nonumber\\
		\lesssim &\frac{1}{\sqrt{n\ell}}\Big( 1+\frac{1}{\sqrt{n} \eta_* \rho_* } \Big) \left( \E\big| \wh{R_t}\big|^{2m} +O_\prec\Big(\frac{1}{\sqrt{n\ell }}\frac{\rho_1 \rho_2}{\wh \gamma}\Big)^{2m} \right).
	\end{align}
	using that $\wh\gamma/\gamma\lesssim 1$, $\gamma \gtrsim \frac{\eta_*}{\rho^*}$ and $n\ell \gg 1$. One could obtain a similar bound for $\wh{\mathcal{T}}^{(4)}_4$ and $\wh{\mathcal{T}}^{(2,2)}_4$ using (\ref{enlaw12}) and (\ref{eq:3giso}) easily, since we gain additional $n^{-1/2}$ from the fourth order cumulants.  This finishes the proof of (\ref{gft_im}).\nc	
\end{proof}

\nc

\appendix

\section{Proof of several lemmas and estimates in Section \ref{sec:zig}}\label{app:lemma_M}

We start with proving Lemma \ref{lemma_M}, whose proof relies on the following Lemma~\ref{lemma_beta}. 
Recall  the structure of the $M$ matrix from \eqref{Mmatrix} and the notations~\eqref{eq:defF}.
We use $m_j= m^{z_j}(\ii \eta_j)$ and $u_j=u^{z_j}(\ii \eta_j)$, $j=1,2$.  
 It was proven in~\cite[Appendix B]{mesoCLT} that the two--body stability operator $\mathcal{B}_{12}$ defined in (\ref{eq:defstabop}) has two non-trivial (small) eigenvalues 
\begin{equation}
	\label{eq:dedfevalues}
	\beta_\pm=\beta_\pm(z_1,\eta_1, z_2,\eta_2):=1-u_1u_2\Re[z_1\overline{z_2}]\pm\sqrt{s}, \qquad s:=m_1^2m_2^2-u_1^2u_2^2(\Im[z_1\overline{z_2}])^2,
\end{equation}
with the standard complex square root taken in the branch cut $\C \setminus (-\infty,0]$, while the remaining eigenvalues are trivially equal to one. The corresponding (right) eigenvectors of $\beta\pm$ are given by
\begin{align}\label{eigenR}
\B_{12}[R_\pm]=\beta_\pm R_\pm, \qquad 
R_{\pm}:=\begin{pmatrix}
	-u_1 u_2 \Re z_1 \bar z_2 \pm \sqrt{s} &  z_1u_1m_2+
	\frac{m_1^2z_2u_2m_2}{\ii u_1 u_2 \Im z_1 \bar z_2 \mp \sqrt{s}}  \\
	\bar z_2 u_2 m_1+\frac{m_2^2 \bar z_1 u_1m_1}{\ii u_1 u_2 \Im z_1 \bar z_2 \mp \sqrt{s}}  & \frac{m_1m_2}{\ii u_1 u_2 \Im z_1 \bar z_2 \mp \sqrt{s}}(-u_1 u_2 \Re z_1 \bar z_2 \pm \sqrt{s})
\end{pmatrix},
\end{align}
and $\B_{12}[F^{(*)}]=F^{(*)}$.  Note that $\B_{12}$ is not self-adjoint, and its adjoint operator is given by
\begin{align}\label{B_star}
	\B^*_{12}:=1-\mathcal{S}[(M^{z_1})^* \cdot (M^{z_2})^*],
\end{align}
with corresponding (left) eigenvectors given by
\begin{align}\label{eigen_B_12}
	\B^*_{12}[L^*_{\pm}]=&\overline{\beta_\pm} L^*_{\pm},\qquad\qquad L_{\pm}:=\begin{pmatrix}
		\frac{\ii u_1u_2 \Im z_1 \bar z_2 \mp \sqrt{s}}{m_1m_2} &  0  \\
		0  & 1
	\end{pmatrix}.
\end{align}
\begin{lemma}\label{lemma_beta}
	Fix any $|z_i|< 10$ and $0< |\eta_i|<10~(i=1,2)$. On the imaginary axis, both the product $\beta_+\beta_-$ and sum $\beta_++\beta_-$ are positive and
	\begin{align}
		 & 0< \beta_+\beta_- \sim \wh\gamma= |z_1-z_2|^2 +\rho_1 |\eta_1|+ \rho_2 |\eta_2|+\Big(\frac{\eta_1}{\rho_1}\Big)^2+\Big(\frac{\eta_2}{\rho_2}\Big)^2,\label{prod_formula}\\
		 & 0<\beta_++\beta_- \sim |z_1-z_2|^2+\rho_1^2 +\rho_2^2+ \frac{|\eta_1|}{\rho_1} + \frac{|\eta_2|}{\rho_2}.\label{sum_formula}
	\end{align}
Furthermore, $\beta_++\beta_--\beta_+\beta_-$ is also postive and
	\begin{align}\label{bb_formula}
		0< \beta_++\beta_--\beta_+\beta_- \sim  \rho_1^2 +\rho_2^2+ \frac{\eta_1}{\rho_1} + \frac{\eta_2}{\rho_2}.
	\end{align}
Morover, $\beta_+-\beta_-$ is either real-valued or purely imaginary and
	\begin{align}\label{diff_formula}
		0<\Re(\beta_+-\beta_-) \lesssim \rho_1\rho_2, \qquad 0<\Im(\beta_+-\beta_-) \lesssim |z_1-z_2|.
	\end{align}
Finally, we have
\begin{align}\label{beta_formula}
	\min\{|\beta_\pm |\} \gtrsim \gamma= \frac{|z_1-z_2|^2 +\rho_1 |\eta_1|+ \rho_2 |\eta_2|+\Big(\frac{\eta_1}{\rho_1}\Big)^2+\Big(\frac{\eta_2}{\rho_2}\Big)^2}{ |z_1-z_2|+\rho_1^2 + \frac{|\eta_1|}{\rho_1}+\rho_2^2+\frac{|\eta_2|}{\rho_2} }.
\end{align}
\end{lemma}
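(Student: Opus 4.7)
The starting point is a symmetry observation: on the imaginary spectral axis, the chiral symmetry of $H^z$ forces $m^z(\ii\eta)\in \ii\R$. Writing $m_j = \ii\mu_j$ with $\mu_j\in\R$ having the same sign as $\eta_j$ and $|\mu_j| = \pi\rho_j$, one has $u_j = \mu_j/(\eta_j+\mu_j)\in(0,1)$ real, and consequently
$$s \;=\; \mu_1^2\mu_2^2 - u_1^2 u_2^2 (\Im[z_1\overline{z_2}])^2 \;\in\; \R.$$
Thus $\{\beta_+,\beta_-\}$ is either a pair of reals (when $s\geq 0$) or a conjugate pair (when $s<0$), which directly gives the dichotomy real vs.\ purely imaginary in~\eqref{diff_formula}. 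The size bounds there follow from $\sqrt{s}\leq |\mu_1||\mu_2|\sim \rho_1\rho_2$ when $s\geq 0$, and $\sqrt{|s|}\leq u_1 u_2|\Im[z_1\overline{z_2}]|\leq |z_2||z_1-z_2|\lesssim |z_1-z_2|$ when $s<0$, using the identity $\Im[z_1\overline{z_2}] = \Im[\overline{z_2}(z_1-z_2)]$.

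The technical heart is a master identity for $\beta_+\beta_-$: starting from $\beta_+\beta_- = (1 - u_1 u_2 \Re[z_1\overline{z_2}])^2 - s$, polarizing $-2\Re[z_1\overline{z_2}] = |z_1-z_2|^2 - |z_1|^2 - |z_2|^2$, and systematically using the Dyson-equation identity $\mu_j^2 = u_j(1 - |z_j|^2 u_j)$ from~\eqref{m_function} to eliminate $|z_j|^2$, together with the elementary consequence $(1-u_j)/u_j = |\eta_j|/|\mu_j|$ of $u_j = \mu_j/(\eta_j+\mu_j)$, a short calculation yields
\begin{equation}\label{eq:plan_prod}
\beta_+\beta_- \;=\; u_1 u_2 |z_1-z_2|^2 \;+\; u_1 u_2\frac{|\eta_1||\eta_2|}{|\mu_1||\mu_2|} \;+\; u_2 |\mu_1||\eta_1| \;+\; u_1 |\mu_2||\eta_2|.
\end{equation}
A parallel manipulation of $\beta_++\beta_- = 2(1 - u_1 u_2 \Re[z_1\overline{z_2}])$ produces
$$\beta_++\beta_- \;=\; u_1 u_2|z_1-z_2|^2 + (1-u_1)+(1-u_2) + u_2\mu_1^2/u_1 + u_1\mu_2^2/u_2,$$
and the $|z_1-z_2|^2$-contribution cancels cleanly in the subtraction, so $\beta_++\beta_--\beta_+\beta_-$ is a sum of four manifestly non-negative terms independent of $z_1-z_2$. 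Every summand in the three identities is non-negative, proving the positivity parts of~\eqref{prod_formula},~\eqref{sum_formula}, and~\eqref{bb_formula}.

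The asymptotic comparisons then reduce to substituting $u_j\sim \rho_j/(|\eta_j|+\rho_j)$ and $|\mu_j|\sim\rho_j$ in the identities above and doing a short case analysis based on whether $|\eta_j|\lessgtr \rho_j$. The key external input is the density control~\eqref{rho_E_bulk}--\eqref{rho_E}: in the bulk $|z_j|\leq 1$ one has $|\eta_j|\lesssim \rho_j^3$, so $u_j\sim 1$ and $(\eta_j/\rho_j)^2 \lesssim \rho_j|\eta_j|$, which is exactly the factor needed to absorb the $(\eta_j/\rho_j)^2$-term of $\widehat\gamma$ into $\rho_j|\eta_j|$; the analogous control near the edge/gap regime is read off from~\eqref{rho_E}. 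The cross term $u_1 u_2|\eta_1||\eta_2|/(|\mu_1||\mu_2|)$ in~\eqref{eq:plan_prod} is sandwiched between $\rho_j|\eta_j|$ and $(\eta_j/\rho_j)^2$ by AM-GM, closing the two-sided estimate~\eqref{prod_formula}; the bounds~\eqref{sum_formula} and~\eqref{bb_formula} are obtained in the same way. Finally,~\eqref{beta_formula} follows from~\eqref{prod_formula} and~\eqref{sum_formula} via $\min_\pm|\beta_\pm|\gtrsim |\beta_+\beta_-|/(|\beta_+|+|\beta_-|)\sim \widehat\gamma/(\beta_++\beta_-)=\gamma$ by the definitions of both quantities.

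The main anticipated obstacle is the case analysis in the third paragraph: the density $\rho^z$ has three qualitatively different local regimes (bulk, cusp, gapped), and verifying the two-sided equivalences uniformly across all combinations of locations of $z_1,z_2$ requires tracking how $u_j$, $\eta_j/\rho_j$, and $\rho_j$ scale in each regime. In particular, one must ensure $u_j\sim 1$ for the cross term in~\eqref{eq:plan_prod} to have the right prefactor; this is the content of the bulk bound $\eta_j/\rho_j\lesssim \rho_j^2$ from~\eqref{rho_E_bulk}, while the mildly-gapped regime is handled analogously using~\eqref{rho_E}. The algebraic identity~\eqref{eq:plan_prod} itself is robust and does the heavy lifting; the rest is bookkeeping.
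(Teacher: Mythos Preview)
Your proposal is correct and follows essentially the same route as the paper: your master identity~\eqref{eq:plan_prod} is algebraically equivalent to the paper's formula $\beta_+\beta_-=u_1u_2|z_1-z_2|^2-m_1^2\tfrac{u_2}{u_1}(1-u_1)-m_2^2\tfrac{u_1}{u_2}(1-u_2)+(1-u_1)(1-u_2)$ (just rewrite $(1-u_1)(1-u_2)=u_1u_2\tfrac{|\eta_1||\eta_2|}{|\mu_1||\mu_2|}$), and your treatment of $\beta_++\beta_-$, $\beta_++\beta_--\beta_+\beta_-$, and $\beta_+-\beta_-$ is identical to the paper's. One small caveat in your final step for~\eqref{beta_formula}: the equivalence $|\beta_+|+|\beta_-|\sim\beta_++\beta_-$ is only immediate when $s\geq 0$ (both $\beta_\pm$ real positive); in the conjugate case $s<0$ one has $\min|\beta_\pm|=\sqrt{\beta_+\beta_-}\sim\sqrt{\widehat\gamma}\gtrsim\gamma$ directly via AM--GM ($\sqrt{\rho_j|\eta_j|}\le\tfrac12(\rho_j^2+|\eta_j|/\rho_j)$), so your conclusion still holds --- the paper instead bounds $|1/\beta_\pm|$ by $\big|\tfrac{1}{\beta_+}+\tfrac{1}{\beta_-}\big|+\big|\tfrac{1}{\beta_+}-\tfrac{1}{\beta_-}\big|$ and invokes~\eqref{bb_formula} and~\eqref{diff_formula}.
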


\begin{proof}[Proof of Lemma \ref{lemma_beta}]
We start with proving the upper bound of $\beta_+\beta_-$ in (\ref{prod_formula}).	Using that $2\Re(z_1 \ov{z_2})=|z_1|^2+|z_2|^2-|z_1-z_2|^2$ and that $m^2=-u+u^2 |z|^2$ from~(\ref{m_function}), we have
\begin{align}\label{product}
\beta_+\beta_-=&1-2u_1u_2 \Re(z_1 \ov{z_2})+u_1^2u_2^2 |z_1|^2 |z_2|^2-m_1^2m_2^2\nonumber\\
=&1-u_1 u_2 \big( |z_1|^2+|z_2|^2-|z_1-z_2|^2\big)+u_1^2u_2^2 |z_1|^2 |z_2|^2-m_1^2m_2^2\nonumber\\
=&1-u_1 u_2 \big( 1+ |z_1|^2(1-u_1)+|z_2|^2(1-u_2)-|z_1-z_2|^2\big)\nonumber\\
=& u_1 u_2 |z_1-z_2|^2-m_1^2 \frac{u_2}{u_1} (1-u_1)-m_2^2 \frac{u_1}{u_2} (1-u_2)+(1-u_1)(1-u_2)
\end{align}
From (\ref{rho}) we know $\rho \gtrsim |\eta| $ and hence
$$1-u=\frac{|\eta|}{|\eta|+|m|}  \sim \frac{|\eta|}{\rho}.$$
Thus we conclude that
\begin{align}\label{bbpp}
	 \beta_+\beta_- \sim |z_1-z_2|^2 +\rho_1 |\eta_1|+ \rho_2 |\eta_2|+\Big|\frac{\eta_1\eta_2}{\rho_1\rho_2}\Big| 
\end{align}
This proves the upper bound in (\ref{prod_formula}) for any $z$ by a simple  Cauchy-Schwarz inequality. To obtain the comparable lower bound in (\ref{prod_formula}), we split the discussion into two cases. For $|z|<1$, it follows directly from (\ref{bbpp}) and that $\rho^3 \gtrsim |\eta|$ from (\ref{rho}). For the complementary regime $|z|>1$, since $\rho^3 \lesssim |\eta|$, the desired lower bound in (\ref{prod_formula}) was already obtained in Appendix A~(Lemma A.1) of the gumbel paper.

We continue to prove the remaining estimates. Similarly to (\ref{product}) we have
\begin{align}\label{sum}
	\beta_++\beta_- = 2-2u_1u_2\Re[z_1\overline{z_2}]= &  u_1 u_2 |z_1-z_2|^2  -\frac{u_2}{u_1} m_1^2-\frac{u_1}{u_2} m_2^2+ 1-u_1 +1-u_2\nonumber\\
	\sim & |z_1-z_2|^2+\rho_1^2 + \frac{|\eta_1|}{\rho_1}+\rho_2^2 + \frac{|\eta_2|}{\rho_2}.
\end{align}
This proves (\ref{sum_formula}). Combining (\ref{product}) with (\ref{sum}), we have
$$ \beta_++\beta_--\beta_+\beta_-=1-u_1u_2-u_2m_1^2-u_1 m_2^2 \sim \rho_1^2 + \frac{|\eta_1|}{\rho_1}+\rho_2^2 + \frac{|\eta_2|}{\rho_2},$$
which proves (\ref{bb_formula}). Similarly, we have $\beta_+-\beta_-=2\sqrt{m_1^2m_2^2-u_1^2u_2^2(\Im[z_1\overline{z_2}])^2}$ in the standard branch cut $\C\setminus (-\infty,0]$. Note that $m_1^2m_2^2-u_1^2u_2^2(\Im[z_1\overline{z_2}])^2$ is real valued, which implies that $\beta_+-\beta_-$ is either real or purely imaginary. More precisely, we have
\begin{align}
	\Re(\beta_+-\beta_-) =&2 \Re \Big( \sqrt{m_1^2m_2^2-u_1^2u_2^2(\Im[z_1\overline{z_2}])^2}\Big) \leq 2 \rho_1\rho_2,\nonumber\\
\Im(\beta_+-\beta_-) =&2 \Im \Big( \sqrt{m_1^2m_2^2-u_1^2u_2^2(\Im[z_1\overline{z_2}])^2}\Big) \leq 2 u_1 u_2 |\Im[z_1\overline{z_2}]| \lesssim |z_1-z_2|,
\end{align}
which prove (\ref{diff_formula}). 

Finally it still remains to prove (\ref{beta_formula}). Combining (\ref{bb_formula}) with (\ref{prod_formula}), we obtain that
\begin{align}
	\frac{1}{\beta_-}+\frac{1}{\beta_+}-1=\frac{\beta_++\beta_--\beta_+\beta_-}{\beta_+\beta_-} \lesssim \frac{ \rho_1^2 + \frac{|\eta_1|}{\rho_1}+\rho_2^2 + \frac{|\eta_2|}{\rho_2}}{|z_1-z_2|^2 +\rho_1 |\eta_1|+ \rho_2 |\eta_2|+\Big(\frac{\eta_1}{\rho_1}\Big)^2+\Big(\frac{\eta_2}{\rho_2}\Big)^2}.
\end{align}
Similarly, combining (\ref{diff_formula}) with (\ref{prod_formula}), we obtain that
\begin{align}
	\Big|\frac{1}{\beta_-}-\frac{1}{\beta_+}\Big|=	\Big|\frac{\beta_+-\beta_-}{\beta_+\beta_-}	\Big| \lesssim \frac{ \max\{ \rho_1\rho_2, |z_1-z_2|\}}{|z_1-z_2|^2 +\rho_1 |\eta_1|+ \rho_2 |\eta_2|+\Big(\frac{\eta_1}{\rho_1}\Big)^2+\Big(\frac{\eta_2}{\rho_2}\Big)^2}.
\end{align} 
Therefore, we obtain that
\begin{align}
	\Big|\frac{1}{\beta_\pm}\Big| \lesssim \Big|\frac{1}{\beta_-}+\frac{1}{\beta_+}\Big|+\Big|\frac{1}{\beta_-}-\frac{1}{\beta_+}\Big|\lesssim \frac{ |z_1-z_2|+\rho_1^2 + \frac{|\eta_1|}{\rho_1}+\rho_2^2+\frac{|\eta_2|}{\rho_2} }{|z_1-z_2|^2 +\rho_1 |\eta_1|+ \rho_2 |\eta_2|+\Big(\frac{\eta_1}{\rho_1}\Big)^2+\Big(\frac{\eta_2}{\rho_2}\Big)^2},
\end{align}
which finishes the proof of Lemma \ref{lemma_beta}.
\end{proof}

\begin{proof}[Proof of Lemma \ref{lemma_M}]	
	Recall from (\ref{eq:defM12}) that 
	\begin{align}
		\label{app:defM12}
		M_{12}^A= \mathcal{B}^{-1}_{12}\Big[M^{z_1}(\ii \eta_1) A M^{z_2}(\ii \eta_2)\Big].
	\end{align}
Then (\ref{M_bound}) directly follows from  (\ref{beta_formula}), \ie
	\begin{align}\label{app_M_12}
		\|M^{A}_{12}\| \lesssim \|\mathcal{B}^{-1}_{12}\| \leq \frac{1}{\min\{|\beta_\pm |\}} \lesssim \frac{1}{\gamma}.
	\end{align}
Moreover, $\wh{M}_{12}^A$  is a fourfold linear combination of $M^A_{12}(\pm \eta_1, z_1, \pm \eta_2, z_2)$ using the identity $\Im G=\frac{1}{2\ii}(G-G^*)$ twice. To prove  (\ref{Im_M_bound}), it suffices to show that
	\begin{align}\label{000}
		\left|\left\<\mathcal{B}^{-1}_{12}\Big[M^{z_1}(\ii \eta_1) A_1 M^{z_2}(\ii \eta_2)\Big] A_2\right\>-\left\<\mathcal{B}^{-1}_{12}\Big[M^{z_1}(\ii \eta_1) A_1 M^{z_2}(-\ii \eta_2)\Big] A_2\right\>\right| \lesssim \frac{\rho_1\rho_2}{\wh \gamma}.
	\end{align}
We will focus on the worst case when $A_1,A_2$ are along the bad directions $L_\pm$. Note that $L_\pm$ are linear combinations of $E_\pm$. It then reduces to prove (\ref{000}) for $A_1,A_2 \in \{E_+, E_-\}$. By a direct computation from (\ref{eigen_B_12}), we obtain
 \begin{align}\label{eigen_E}
 	\big(\B^*_{12}\big)^{-1}[E_\pm]=
 	\begin{pmatrix}
 		\frac{1- z_1 u_1 \bar z_2 u_2 \pm m_1m_2}{\beta_-\beta_+}   &  0 \\
 		0  & \frac{m_1m_2  \pm(1- \bar z_1 u_1   z_2 u_2)}{\beta_-\beta_+}
 	\end{pmatrix}.
 \end{align}
Hence using (\ref{eigen_E}) and (\ref{Mmatrix}), we obtain that
	\begin{align}
		\left\<\mathcal{B}^{-1}_{12}\Big[M^{z_1}(\ii \eta_1) E_\pm M^{z_2}(\ii \eta_2)\Big] E_\pm\right\>=&\left\< \big(\B^*_{12}\big)^{-1}[E_\pm], M^{z_1}(\ii \eta_1) E_\pm M^{z_2}(\ii \eta_2)\right\>\nonumber\\
		=& \pm \frac{m_1^2m_2^2 \pm m_1m_2+\Re[z_1 \ov{z_2}] u_1u_2-|z_1|^2 |z_2|^2 u_1^2 u_2^2 }{\beta_+ \beta_-};\label{111}\\
		\left\<\mathcal{B}^{-1}_{12}\Big[M^{z_1}(\ii \eta_1) E_- M^{z_2}(\ii \eta_2)\Big] E_+\right\>=&-\left\<\mathcal{B}^{-1}_{12}\Big[M^{z_1}(\ii \eta_1) E_+ M^{z_2}(\ii \eta_2)\Big] E_-\right\>=\frac{ -\ii \Im [z_1 \ov{z_2}] u_1u_2}{\beta_+ \beta_-}.\label{222}
	\end{align}
	Using that $u(-\ii \eta)=u(\ii \eta)$ and $m(-\ii \eta)=-m(\ii \eta)$, we obtain the following cancellation
	\begin{align}\label{improve_im}
		\left|\left\<\mathcal{B}^{-1}_{12}\Big[M^{z_1}(\ii \eta_1) A_1 M^{z_2}(\ii \eta_2)\Big] A_2\right\>-\left\<\mathcal{B}^{-1}_{12}\Big[M^{z_1}(\ii \eta_1) A_1 M^{z_2}(-\ii \eta_2)\Big] A_2\right\>\right| \lesssim \frac{\rho_1\rho_2}{\beta_+\beta_-} \lesssim \frac{\rho_1\rho_2}{\wh \gamma},
	\end{align}
	for any $A_1,A_2 \in \{E_{+}, E_{-}\}$, which proves (\ref{Im_M_bound}).

	 We next prove the last statement (\ref{M_3_bound}).
	 Recall from (\ref{eq:defM121}) and (\ref{cov}) that
	\begin{align}
		M_{121}^{A_1,A_2}=&\mathcal{B}_{11}^{-1}\left[M_1A_1M_{21}^{A_2}+M_1 \Big( \langle M_{12}^{A_1} E_+\rangle E_+-\langle M_{12}^{A_1} E_-\rangle E_- \Big) M_{21}^{A_2}\right].
	\end{align}
		From \cite[Eq. (3.8)]{complex_CLT}, we know $\mathcal{B}^{*}_{11}$ has two small eigenvalues 
	\begin{align}\label{beta_0}
		\beta_*:=\beta_-(z,\eta, z,\eta) \sim \frac{\eta}{\Im m}, \qquad \beta:=\beta_+(z,\eta, z,\eta) \sim \frac{\eta}{\Im m}+(\Im m)^2,
	\end{align}
	with the left eigenvectors given by $E_-$ and $E_+$, respectively. It then suffices to test the above matrix against these two directions $\{E_+, E_-\}$, \ie
	\begin{align}\label{temp}
		\Big \langle M_{121}^{A_1,A_2} E_\pm \Big \rangle=&\frac{\big\langle M_{12}^{A_1} E_+\big\rangle    \big\langle  E_\pm M_1 E_+ M_{21}^{A_2}  \big\rangle -\big\langle M_{12}^{A_1} E_-\big\rangle    \big\langle E_\pm M_1 E_- M_{21}^{A_2}  \big\rangle+ \big\langle E_\pm M_1 A_1 M_{21}^{A_2} \big\rangle}{\beta_\pm(z_1,\eta_1, z_1,\eta_1)}.
	\end{align}
Note that using (\ref{beta_0}) and (\ref{app_M_12}) the last term above is easily bounded by 
\begin{align}\label{naive_M}
\left|\frac{1}{\beta_{(*)}} \big\langle E_\pm M_1 A_1 M_{21}^{A_2} \big\rangle \right| \lesssim \frac{\rho_1}{\eta_1 \gamma}. 
\end{align}
For the remaining terms, using (\ref{app:defM12}), we further write
\begin{align}
	&\big\langle M_{12}^{A_1} E_+\big\rangle   \big\langle  E_\pm M_1 E_+ M_{21}^{A_2}  \big\rangle -\big\langle M_{12}^{A_1} E_-\big\rangle    \big\langle E_\pm M_1 E_- M_{21}^{A_2}  \big\rangle \nonumber\\
	=& \big\langle (\mathcal{B}^*_{12})^{-1}[E^*_+], M_1 A_1 M_2\big\rangle    \big\langle  (\mathcal{B}^*_{21})^{-1}[E^*_+ M^*_1 E^*_\pm], M_2 A_2 M_1  \big\rangle \nonumber\\
	&\qquad -\big\langle (\mathcal{B}^*_{12})^{-1}[E^*_-], M_1 A_1 M_2 \big\rangle    \big\langle (\mathcal{B}^*_{21})^{-1}[E^*_- M^*_1 E^*_\pm], M_2 A_2 M_1  \big\rangle .
\end{align}
 Using $M_1=m_1 E_+-z_1 u_1 F-\ov{z_1} u_1 F^*$, the matrices of the form $E_\pm^* M_1^* E^*_\pm$ can be written as linear combiniations of $E_\pm$, $F$ and $F^*$. Then we introduce the following estimates.  To simplify the notations, we may assume $\rho\sim \rho_1 \sim \rho_2$ and $\eta \sim \eta_1 \sim \eta_2$. The same upper bound estimates below also apply to general $\eta_i$ and $\rho_i$, with $\rho$ and $\eta$ replaced by $\rho^*$ and $\eta_*$.   
\begin{align}\label{eigen_EE}
	\big(\B^*_{12}\big)^{-1}[E_\pm]=
	\begin{pmatrix}
		\frac{1- z_1 \bar z_2 \pm m_1m_2+O(\eta/\rho)}{\beta_-\beta_+}   &  0 \\
		0  & \frac{m_1m_2  \pm(1- \bar z_1   z_2 +O(\eta/\rho))}{\beta_-\beta_+}
	\end{pmatrix},
\end{align}
\begin{align}\label{eigen_F}
	\big(\B^*_{12}\big)^{-1}[F]
	=& 	\begin{pmatrix}
		\frac{m_2 (\ov{z_1}-\ov{z_2})+O(\eta)}{\beta_-\beta_+}  &  1 \\
		0  & \frac{m_1 (\ov{z_2}-\ov{z_1})+O(\eta)}{\beta_-\beta_+}	\end{pmatrix},
\end{align} 
\begin{align}\label{eigen_F_star}
	\big(\B^*_{12}\big)^{-1}[F^*]
	=& 	\begin{pmatrix}
		\frac{m_1 (z_2 -z_1)+O(\eta)}{\beta_-\beta_+}  &  0 \\
		1  & \frac{m_2 (z_1-z_2)+O(\eta)}{\beta_-\beta_+}	\end{pmatrix},
\end{align}
which follow directly from (\ref{B_star})-(\ref{eigen_B_12}) and that $u=1+O(\eta/\rho)$. By direct computations using (\ref{eigen_EE})-(\ref{eigen_F_star}) and (\ref{beta_0}), we obtain that
\begin{align}
	&\left|\frac{\big\langle M_{12}^{A_1} E_+\big\rangle    \big\langle  E_+ M_1 E_+ M_{21}^{A_2}  \big\rangle - \big\langle M_{12}^{A_1} E_-\big\rangle    \big\langle E_+ M_1 E_- M_{21}^{A_2}  \big\rangle}{\beta}  \right| \lesssim \frac{1}{\rho^2}\frac{\rho^5+\rho|z_1-z_2|^2}{(\wh \gamma)^2} \lesssim \frac{\rho^2}{\eta \wh \gamma},\nonumber\\
	&\left|\frac{\big\langle M_{12}^{A_1} E_+\big\rangle    \big\langle  E_- M_1 E_+ M_{21}^{A_2}  \big\rangle - \big\langle M_{12}^{A_1} E_-\big\rangle    \big\langle E_- M_1 E_- M_{21}^{A_2}  \big\rangle }{\beta_*}   \right| \lesssim \frac{\rho}{\eta} \frac{\rho|z_1-z_2|^2}{(\wh\gamma)^2} \lesssim \frac{\rho^2}{\eta \wh \gamma},
\end{align} 
for $|z|\leq 1$, where we also used $\rho^3 \gtrsim \eta$, $\wh\gamma=\beta_-\beta_+ \gtrsim |z_1-z_2|^2+\rho\eta$,  and 
$$m^2=|z|^2-1+O(\rho \eta), \qquad 2\Re(z_1 \ov{z_2})=|z_1|^2+|z_2|^2-|z_1-z_2|^2, \qquad |\Im(z_1 \ov{z_2})| \sim |z_1-z_2|.$$
Plugging the above estimates and (\ref{naive_M}) into (\ref{temp}) we obtain that
\begin{align}
	\Big \langle M_{121}^{A_1,A_2} E_\pm \Big \rangle \lesssim \frac{(\rho^*)^2}{\eta_* \wh\gamma}+\frac{\rho_1}{\eta_1 \gamma} \lesssim \frac{1}{\eta_*\gamma}.
\end{align}
It hence proves (\ref{M_3_bound}) for $|z|\leq 1$. The proof for the complementary regime $1<|z|\leq 10$ is similar and actually easier using that $\rho^3 \leq \eta$, so we omit the details.  We hence finish the proof of Lemma \ref{lemma_M}. 
\end{proof}

\begin{proof}[ Proof of (\ref{eq:explcompdet}) for $\eta_1\eta_2<0$]
	Note that for $|z_i| \leq 1-c$, from (\ref{eq:dedfevalues}) and (\ref{eigen_B_12})  we know that, $\beta_+\sim 1$, $\beta_- \sim \gamma$, and $L_-$ is the only bad direction. Recall from \cite[Eq. (B.6)]{mesoCLT} that
	\begin{align}\label{ev_zz}
		E_\pm=\big(1+O(|z_1-z_2|)\big)L_{\pm\sigma}+O(|z_1-z_2|)L_{\mp\sigma}, \qquad \sigma:=\sgn(\eta_1\eta_2).
	\end{align}
	Decomposing the matrix $A$ as $A=\<AE_+\>E_++\<AE_-\>E_-+A_c$, we obtain that
	\begin{align}\label{app_M2}
		&	\big|\langle M_{12}^{A} E_{-\sigma}\rangle\big|\lesssim  \big|\langle M_{12}^{E_+}E_{-\sigma}\rangle\big|+\big|\langle M_{12}^{E_-}E_{-\sigma} \rangle\big|+\big|\langle M_{12}^{A_c} E_{-\sigma}\rangle\big| \lesssim \big|\langle M_{12}^{E_{-\sigma}}E_{-\sigma}\rangle\big|+\frac{|z_1-z_2|}{ \gamma},\nonumber\\
		&\big|\langle M_{12}^{A} E_{\sigma} \rangle\big| \lesssim \big|\langle M_{12}^{E_+} E_{\sigma} \rangle\big|+\big|\langle M_{12}^{E_-} E_{\sigma} \rangle\big|+\big|\langle M_{12}^{A_c} E_{\sigma} \rangle\big|\lesssim \frac{|z_1-z_2|}{\gamma}.
	\end{align}
	This proves (\ref{eq:explcompdet}) for $\eta_1\eta_2<0$.  Similar upper bounds also hold for $\eta_1\eta_2>0$ with switched signs.
\end{proof}

\begin{proof}[Proof of Lemma \ref{lemma_meta}]
	The first relation for $M_{12}$ was proved in Eq. (A.26) in the Appendix A.5 of \cite{mesoCLT}. The  second relation for $M_{121}$ then follows by a so-called meta argument \cite{CHNR, CEHS23}, using recursive Dyson equations for chains of product of resolvents, see e.g. \cite[Lemma D.1]{CEHS23}.
\end{proof}

\end{document}